\numberwithin{equation}{section}
\theoremstyle{plain}
\newtheorem{prop}{Proposition}[section]
\newtheorem{coro}[prop]{Corollary}
\newtheorem{lemm}[prop]{Lemma}
\newtheorem{ques}[prop]{Question}
\theoremstyle{defi}
\newtheorem{defi}[prop]{Definition}
\newtheorem{algo}[prop]{Algorithm}
\newtheorem{nota}[prop]{Notation}
\newtheorem{exam}[prop]{Example}
\def\Reff#1; #2; #3; #4; #5; #6; #7\par{%
\bibitem{#1} #2, {\it #3}, #4 {\bf #5} (#6) #7}
\def\Ref#1; #2; #3; #4\par{%
\bibitem{#1} #2, {\it #3}, #4}
\def\WPoint(#1,#2,#3){\cnode[style=thin,fillcolor=white,fillstyle=solid](#1,#2){0.5}{#3}}
\newcommand\WDot[1]{\pscircle[linewidth=.8pt, fillstyle=solid](#1){2pt}}
\def\ThickSegment(#1,#2){\ncline[linewidth=1.5pt,border=1.5pt,linecolor=black, nodesep=0pt]{#1}{#2}}
\definecolor{PcolorAA}{rgb}{0.8,0.75,1}% light the 1st color
\definecolor{PcolorAAA}{rgb}{0.9,0.85,1}% very light tthe 1st color
\definecolor{PcolorBBB}{rgb}{1,.88,.85}% very light the 2d color
\renewcommand\aa{a}
\renewcommand\AA{A}
\def\act{\mathbin{\scriptscriptstyle\bullet}}
\newcommand\add{\mathsf{add}}
\newcommand\Ass{\mathrm{A}}
\newcommand\Augm{\mathsf{Aug}(\ZZZZ)}
\newcommand\Aut{\mathsf{Aut}}
\newcommand\bb{b}
\newcommand\BB{B}
\newcommand\Bi{B_\infty}
\newcommand\BRsp{B_\infty^{\mathsf{sp}}}
\newcommand\BWi{B\HS{-0.1}W_{\HS{-0.4}\infty}}
\newcommand\card{\mathtt{\#}}
\newcommand\cc{c}
\newcommand\chit{\widetilde{\chi}}
\newcommand\coIm{\mathsf{coIm}}
\newcommand\comp{\mathbin{\scriptscriptstyle\circ}}
\newcommand\concat{\mathord{{}^{\frown}}}
\newcommand\Conj{\mathsf{Conj}}
\newcommand\Conjj{\mathsf{Conj}_{\opR}}
\newcommand\Conjjj{\mathsf{Conj}_{\opR,\opRb}}
\newcommand\cut{\mathsf{cut}}
\newcommand\Cycl{\mathsf{Cycl}}
\newcommand\DISC{\hbox{\scshape{disc}}}
\renewcommand\div{\mathbin{\scriptstyle\sqsubset}}
\newcommand\divLD{\mathrel{\div_{\scriptscriptstyle\mathsf{LD}}}}
\newcommand\divsLD{\mathrel{\div^*_{\scriptscriptstyle\mathsf{LD}}}}
\newcommand\divs{\mathbin{\scriptstyle\sqsubset^*}}
\newcommand\ee{e}
\newcommand\EE{E}
\newcommand\eqL{\mathrel{=_{\scriptscriptstyle\LLL}}}
\newcommand\eqLD{\mathrel{=_{\scriptscriptstyle\mathsf{LD}}}\nobreak}
\newcommand\eqLDI{\mathrel{=_{\scriptscriptstyle\mathsf{LDI}}}\nobreak}
\newcommand\eqQuandle{\mathrel{=_{\scriptscriptstyle\mathsf{quandle}}}}
\newcommand\eqRack{\mathrel{=_{\scriptscriptstyle\mathsf{rack}}}}
\newcommand\EQUIV{\hbox{\scshape{equiv}}_{\Bi}}
\newcommand\eval{\mathsf{eval}}
\newcommand\EVAL{\hbox{\scshape{eval}}}
\newcommand\ew{\varepsilon}
\newcommand\expLD{\mathrel{\to_{\scriptscriptstyle\LD}}}
\newcommand\EXPAND{\hbox{\scshape{LD-expand}}}
\newcommand\False{\mathtt{false}}
\newcommand\ff{f}
\newcommand\FF{F}
\newcommand\FG[1]{F_{#1}}
\newcommand\Fi{F_\infty}
\newcommand\Found{\hbox{\scshape{found}}}
\newcommand\Free{\mathsf{Free}}
\renewcommand\ge{\geqslant\nobreak}
\renewcommand\gg{g}
\newcommand\GG{G}
\newcommand\Geom[1]{\mathsf{Geom}_{#1}}
\newcommand\Geomt[1]{\mathsf{G}\widetilde{\smash{\mathsf{eom}}}_{#1}}
\newcommand\Geomtp[1]{\mathsf{G}\widetilde{\smash{\mathsf{eom}}}^+_{#1}}
\newcommand\HalfConj{\mathsf{HalfConj}}
\newcommand\HS[1]{\leavevmode\null\hspace{#1mm}}
\newcommand\ie{{\it i.e.}}
\newcommand\ii{i}
\renewcommand\Im{\mathsf{Im}}
\newcommand\Inj[1]{\mathfrak{I}_{\HS{-0.2}{#1}}}
\newcommand\inv{^{-1}}
\newcommand\INV{\hbox{\scshape{inv}}}
\newcounter{ITEM}
\newcommand\ITEM[1]{\setcounter{ITEM}{#1}\leavevmode\hbox{\rm(\roman{ITEM})}}
\newcommand\Iter{\mathsf{Iter}}
\newcommand\jj{j}
\newcommand\kk{k}
\newcommand\lab[1]{\noindent\hbox to 5mm{\null\hfill\llap{\tiny#1:}\hspace{1mm}}}
\newcommand\LD{\mathrm{LD}}
\newcommand\LDI{\mathrm{LDI}}
\newcommand\LDop[1]{\hbox{\scshape{ld}}_{#1}}
\renewcommand\le{\leqslant\nobreak}
\newcommand\LL{L}
\newcommand\LLL{\mathcal{L}}
\newcommand\LS{\mathsf{left}}
\newcommand\mm{m}
\newcommand\mult{\mathbin{\scriptstyle\sqsupset}\nobreak}
\newcommand\mults{\mathbin{\scriptstyle\sqsupset^*}\nobreak}
\newcommand\multsLD{\mathrel{\mult^*_{\scriptscriptstyle\mathsf{LD}}}\nobreak}
\newcommand\nn{n}
\newcommand\NN{N}
\newcommand\NNNN{\mathbb{N}}
\newcommand\NT{\mathcal{N}}
\newcommand\op{\mathbin{\triangleright}\nobreak}
\newcommand\opL{\mathbin{\triangleright}}
\newcommand\opp{\mathord{\bullet}}
\newcommand\opR{\mathbin{\triangleleft}}
\newcommand\opRb{\mathbin{\overline\triangleleft}}
\newcommand\pdots{\mathrel{\HS{0.2}{\cdot}{\cdot}{\cdot}\HS{0.2}}}
\newcommand\Pol[1]{[#1]}
\newcommand\pp{p}
\newcommand\RD{\mathrm{RD}}
\newcommand\Red{\hbox{\scshape{red}}}
\newcommand\resp{\mbox{\it resp}.\ }
\newcommand\Rel[1]{\mathsf{Rel}_{#1}}
\newcommand\rr{r}
\newcommand\RR{R}
\newcommand\sh{\mathsf{sh}}
\newcommand\Shift{\mathsf{Shift}}
\newcommand\SHIFT{\hbox{\scshape{shift}}}
\newcommand\sig[1]{\sigma_{#1}}
\newcommand\sigg[2]{\sigma_{#1}^{#2}}
\newcommand\siginv[1]{\sigma_{#1}^{-1}}
\newcommand\SOL{\hbox{\scshape{sol}}}
\renewcommand\ss{s}
\renewcommand\SS{S}
\newcommand\SSS{\mathcal{S}}
\newcommand\sub[2]{#1_{\!/#2}}
\newcommand\Sym[1]{\mathfrak{S}_{\HS{-0.4}#1}}
\newcommand\TERM[2]{\mathsf{Term}_{#1}(#2)}
\newcommand\True{\mathtt{true}}
\newcommand\TT{T}
\newcommand\ttt{t}
\def\VR(#1,#2){\vrule width0pt height#1mm depth#2mm}
\newcommand\vs{{\it vs.}\ }
\newcommand\VV{V}
\newcommand\wdots{, ..., }
\newcommand\ww{w}
\newcommand\xx{x}
\newcommand\XX{X}
\newcommand\yy{y}
\newcommand\zz{z}
\newcommand\ZZZZ{\mathbb{Z}}
\author{Patrick DEHORNOY}
\address{Laboratoire de Math\'ematiques Nicolas Oresme UMR 6139\\ Universit\'e de Caen, 14032~Caen, France}
\email{patrick.dehornoy@unicaen.fr}
\urladdr{dehornoy.users.lmno.cnrs.fr}
\title{Some aspects of the SD-world}
\keywords{selfdistributivity, shelf, rack, quandle, spindle, word problem}
\subjclass{20N02, 03D40, 08A50}
\begin{document}

\begin{abstract}
We survey a few of the many results now known about the selfdistributivity law and selfdistributive structures, with a special emphasis on the associated word problems and the algorithms solving them in good cases.
\end{abstract}

\maketitle

Selfdistributivity (SD) is the algebraic law stating that a binary operation is distributive with respect to itself. It comes in two versions: \emph{left} self-distributivity $\xx(\yy\zz) = (\xx\yy)(\xx\zz)$, also called~LD, and \emph{right} self-distributivity $(\xx\yy)\zz = (\xx\zz)(\yy\zz)$, also called~RD. Their properties are, of course, entirely symmetric; however, because both versions occur in important examples (see below), it seems difficult to definitely choose one of them and stick to it. To avoid ambiguity, it is better to use an oriented operation symbol: we shall follow the excellent convention, now widely adopted, of using~$\opL$ for LD and $\opR$ for RD, both coherent with the intuition that the operation is an action on the term to which the triangle points. Then SD means that the action is compatible with the operation, and it takes the forms
\begin{gather}\label{LD}\tag{LD}
\xx \opL (\yy \opL \zz) = (\xx \opL \yy) \opL (\xx \opL \zz),\\
\label{RD}\tag{RD}
(\xx \opR \yy) \opR \zz = (\xx \opR \zz) \opR (\yy \opR \zz).
\end{gather}
Although selfdistributivity syntactically resembles associativity---only one letter separates $\xx(\yy\zz) = (\xx\yy)(\xx\zz)$ from $\xx(\yy\zz) = (\xx\yy)\zz$---, their properties are quite different, selfdistributivity turning out to be much more complicated: even the solution of the word problem and the description of free structures is highly nontrivial.

Selfdistributivity has been considered explicitly as early as the end of the XIXth century~\cite{Pei}, and selfdistributive structures have been extensively investigated, specially around Belousov in Kichinev from the late 1950s \cite{Bel1, Bel2} and around Je\v zek, Kepka, and N\v emec in Prague from the 1970s, with a number of structural results, in particular about two-sided selfdistributivity, see~\cite{JKN}. The interest in selfdistributivity was renewed and reinforced in the 1980s by the discovery of connections with low-dimensional topology by D.\,Joyce~\cite{Joy} and S.\,Matveev~\cite{Mat}, and with the theory of large cardinals in set theory by R.\,Laver~\cite{Lva} and the current author~\cite{Dem}. More recently, the connection with topology was made more striking by the cohomological approach proposed by R.\,Fenn, D.\,Rourke, and B.\,Sanderson~\cite{FRS}, considerably developed from the 1990s in work by J.S.\,Carter, S.\,Kamada, and other authors, see~\cite{CarterSurvey, CJKLS, CKS, FeR}; the study of quandles has now become a full subject, with intertwined papers both on the topological and on the algebraic side.

The aim of this text is to survey some aspects of selfdistributive algebra, with a special emphasis on the involved word problems. A comprehensive reference is the monograph~\cite{Dgd}, which is not always easy to read. We hope that this text will be more reader-friendly and might inspire further research (several open questions are mentioned).

The paper is organized in four sections. In the first section, we recall the now standard terminology and mention a few examples of selfdistributive structures, some classical, some more exotic. In Sections~\ref{S:WPLD1} and~\ref{S:WPLD2}, we survey the many results involving the word problem in the case of selfdistributivity alone. Finally, in Section~\ref{S:WPOther}, we similarly address the (easy) cases of racks and quandles and the (frustrating) case of spindles.

%%%%
\section{Shelves, spindles, racks, and quandles}

Selfdistributive structures abound, and we begin with a few pictures from the SD-world. After recalling the usual terminology (Section~\ref{SS:Term}), we mention the classical examples (Section~\ref{SS:Classical}), and some more exotic ones (Section~\ref{SS:Exotic}). The description is summarized in a sort of chart of the SD-world (Section~\ref{SS:Chart}).

%%%%
\subsection{Terminology}\label{SS:Term}

Throughout the paper, we use the now well established terminology introduced in topology.

\begin{defi}
A \emph{shelf} (or \emph{right-shelf}) is a structure~$(\SS, \opR)$, where $\opR$ is a binary operation on a (non-empty) set~$\SS$ that obeys the right selfdistributivity law~$\RD$. Symmetrically, a \emph{left-shelf} is a structure~$(\SS, \opL)$, with~$\opL$ obeying the left selfdistributivity law~$\LD$. 
\end{defi}

\begin{defi}
A \emph{spindle} is a shelf~$(\SS, \opR)$, in which the operation~$\opR$ obeys the idempotency law $\xx \opR \xx = \xx$.
\end{defi}

\begin{defi}
A \emph{rack} is a shelf~$(\SS, \opR)$, in which right translations are bijective, \ie, for every~$\bb$ in~$\SS$, the map $\RR_\bb: \xx \mapsto \xx \opR \bb$ is a bijection from~$\SS$ to~itself.
\end{defi}

Of course, \emph{left-racks} are left-shelves~$(\SS, \opL)$ with bijective left translations.

Racks can equivalently be defined as structures involving two operations:

\begin{prop}\label{P:InvOp}
If $(\SS, \opR)$ is a rack, and, for~$\bb, \cc$ in~$\SS$, we let $\cc \opRb \bb$ be the (unique) element~$\aa$ satisfying $\aa \opR \bb = \cc$, then $\opRb$ also obeys~RD, and $\opR$ and~$\opRb$ are connected by the mixed laws
\begin{equation}\label{E:InvOp}
(\xx \opR \yy) \opRb \yy = (\xx \opRb \yy) \opR \yy = \xx.
\end{equation}
Conversely, if $(\SS, \opR)$ is a shelf and there exists~$\opRb$ satisfying~\eqref{E:InvOp}, then $(\SS, \opR)$ is a rack and $\opRb$ is the second operation associated with~$\opR$ as above.
\end{prop}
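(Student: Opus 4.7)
The plan is to handle the two implications in turn. For the forward direction, assume $(\SS, \opR)$ is a rack. Bijectivity of each right translation~$\RR_\bb$ immediately makes $\cc \opRb \bb$ well-defined as the unique preimage of~$\cc$ under~$\RR_\bb$. Both identities in~\eqref{E:InvOp} then fall out of this definition: $(\xx \opRb \yy) \opR \yy = \xx$ is the defining property of~$\xx \opRb \yy$, while $(\xx \opR \yy) \opRb \yy = \xx$ holds because~$\xx$ is the unique preimage of $\xx \opR \yy$ under~$\RR_\yy$.

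The main step will be to derive RD for~$\opRb$. My approach is to set $\aa := \xx \opRb \zz$ and $\bb := \yy \opRb \zz$, so that $\aa \opR \zz = \xx$ and $\bb \opR \zz = \yy$, and then to apply RD for~$\opR$ to the triple $(\aa \opRb \bb,\, \bb,\, \zz)$. The first half of~\eqref{E:InvOp} collapses $(\aa \opRb \bb) \opR \bb$ to~$\aa$, so the resulting instance of RD reduces to $\xx = ((\aa \opRb \bb) \opR \zz) \opR \yy$. Stripping the two outer $\opR$'s by applying~\eqref{E:InvOp} twice then rewrites this as $(\xx \opRb \yy) \opRb \zz = \aa \opRb \bb = (\xx \opRb \zz) \opRb (\yy \opRb \zz)$, which is RD for~$\opRb$.

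For the converse, assume $(\SS, \opR)$ is a shelf equipped with some~$\opRb$ satisfying~\eqref{E:InvOp}. The two identities there say precisely that $\xx \mapsto \xx \opRb \bb$ is a two-sided inverse of~$\RR_\bb$, forcing~$\RR_\bb$ to be bijective for every~$\bb$; hence $(\SS, \opR)$ is a rack. That $\opRb$ coincides with the second operation associated with~$\opR$ then follows because $(\cc \opRb \bb) \opR \bb = \cc$ identifies $\cc \opRb \bb$ as the (necessarily unique) element~$\aa$ with $\aa \opR \bb = \cc$.

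I expect the only mild obstacle to be the derivation of RD for~$\opRb$, whose sole subtlety lies in choosing the right instance of RD for~$\opR$ to apply; every other step is a direct unwinding of the definitions and~\eqref{E:InvOp}.
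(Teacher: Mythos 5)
Your proof is correct and is exactly the ``easy verification'' that the paper omits: the derivation of RD for~$\opRb$ by applying RD for~$\opR$ to the triple $(\aa \opRb \bb, \bb, \zz)$ with $\aa = \xx \opRb \zz$, $\bb = \yy \opRb \zz$ and then cancelling with~\eqref{E:InvOp} checks out line by line, as do the two easy directions. Nothing to add.
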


The proof is an easy verification. Note that, in the situation of Prop.~\ref{P:InvOp}, each of the operations~$\opR, \opRb$ is distributive with respect to the other: \eqref{E:InvOp} implies 
\begin{gather*}
((\xx \opR \yy) \opRb \zz) \opR \zz = \xx \opR \yy, \text{ and }\\
((\xx \opRb \zz) \opR (\yy \opRb \zz)) \opR \zz = (((\xx \opRb \zz) \opR \zz) \opR ((\yy \opRb \zz)) \opR \zz) = \xx \opR \yy,
\end{gather*}
whence $(\xx \opR \yy) \opRb \zz = (\xx \opRb \zz) \opR (\yy \opRb \zz)$, whenever right translations of~$\opR$ are injective.

\begin{defi}
A \emph{quandle} is a rack~$(\SS, \opR)$, which is also a spindle, \ie, the operation~$\opR$ obeys the idempotency law $\xx \opR \xx = \xx$.
\end{defi}

As the examples below show, a rack need not be a quandle; however, a rack is always rather close to a quandle, in that the actions of an element and its square coincide: every rack satisfies the law $\xx \opR \yy = \xx \opR (\yy \opR \yy)$, as shows the computation
\begin{equation*}
\xx \opR (\yy \opR \yy) = ((\xx \opRb \yy) \opR \yy) \opR (\yy \opR \yy) = ((\xx \opRb \yy) \opR \yy) \opR \yy = \xx \opR \yy.
\end{equation*}

\subsection{Classical examples}\label{SS:Classical}

\begin{exam}[\bf trivial shelves]
Let $\SS$ be any set, and $\ff$ be any map from~$\SS$ to itself. Then defining $\aa \opR_\ff \bb := \ff(\aa)$ provides a selfdistributive operation on~$\SS$. The shelf $(\SS, \opR_\ff)$ is a spindle if, and only if, $\ff$ is the identity map; it is a rack if, and only if, $\ff$ is a bijection. Special cases are the {\bf cyclic racks}~$\Cycl_\nn$ corresponding to $\SS:= \ZZZZ{/}\nn\ZZZZ$ with $\aa \opR \bb := \aa + 1$, and the {\bf augmentation rack}~$\Augm$, corresponding to $\SS:= \ZZZZ$ with $\aa \opR \bb:= \aa + 1$ again.
\end{exam}

\begin{exam}[\bf lattice spindles]
If $(\LL, \wedge, \vee, 0, 1)$ is a lattice, then both $(\LL, \wedge)$ and $(\LL, \vee)$ are spindles. Moreover, they are both right- and left-spindles in the obvious sense, since the operations are commutative. Whenever $\LL$ has at least two elements, these spindles are not racks: for every~$\aa$ in~$\LL$, we have $0 \vee \aa = \aa \vee \aa = \aa$, so the right translation associated with a non-zero element is never injective.
\end{exam}

\begin{exam}[\bf Boolean shelves]
Let $(\BB, \wedge, \vee, 0, 1, \bar\  \ )$ be a Boolean algebra (\ie, a lattice that is distributive and complemented).  For~$\aa, \bb$ in~$\BB$, define $\aa \opR \bb:= \aa \vee \bar\bb$. Then $(\BB, \opR)$ is a shelf, as we find
\begin{align*}
(\xx \opR \yy) \opR \zz &= \xx \vee \bar\yy \vee \bar\zz = (\xx \vee \bar\yy \vee \bar\zz) \wedge 1 = ((\xx \vee \bar\zz) \vee \bar\yy) \wedge ((\xx \vee \bar\zz) \vee \zz)\\
&= (\xx \vee \bar\zz) \vee (\bar\yy \wedge \zz) = (\xx \vee \bar\zz) \vee (\overline{\yy \vee \bar\zz}) = (\xx \opR \zz) \opR (\yy \opR \zz).
\end{align*}
This shelf is neither a spindle, nor a rack for $\card\BB \ge 2$ as, for $\aa \not= 1$, we have $\aa \opR \aa = \aa \vee \bar\aa = 1$, and $\aa \opR \aa = 1 = 1 \opR \aa$. Note that, under the standard logical interpretation, $\opR$ corresponds to a reverse implication~$\Leftarrow$.
\end{exam}

\begin{exam}[\bf Alexander spindles]
Let $\RR$ be a ring and $\ttt$ belong to~$\RR$. Consider an $\RR$-module~$\EE$. For~$\aa, \bb$ in~$\EE$, define $\aa \opR \bb:= \ttt\aa + (1 - \ttt)\bb$. Then $(\EE, \opR)$ is a spindle, as $\opR$ is idempotent and we find
$$(\xx \opR \yy) \opR \zz = \ttt^2\xx + (\ttt - \ttt^2)\yy + (1 - \ttt)\zz = (\xx \opR \zz) \opR (\yy \opR \zz).$$
This spindle is a quandle if, and only if, $\ttt$ is invertible in~$\RR$, with the second operation then defined by $\cc \opRb \bb:= \ttt\inv \cc + (1 - \ttt\inv)\bb$.
\end{exam}

\begin{exam}[\bf conjugacy quandles]\label{X:Conj}
Let $\GG$ be a group. Define 
\begin{equation}\label{E:Conj}
\aa \opR \bb:= \bb\inv\aa\bb
\end{equation}
for~$\aa, \bb$ in~$\GG$. Then $(\GG, \opR)$ is a quandle, as $\opR$ is idempotent and we find
$$(\aa \opR \bb) \opR \cc = \cc\inv \bb\inv \aa\bb\cc = (\aa \opR \cc) \opR (\bb \opR \cc).$$
The second operation is then given by 
\begin{equation}\label{E:Conjj}
\cc \opRb \bb:= \bb\cc\bb\inv.
\end{equation}
Hereafter, these structures are denoted by~$\Conjj(\GG)$ and~$\Conjjj(\GG)$, according to whether we consider the one operation or the two operations version. 

Variants are obtained by defining $\aa \opR \bb:= \bb^{-\nn}\aa\bb^{\nn}$ for~$\nn$ a fixed integer, or $\aa \opR \bb:= \phi(\bb\inv\aa)\bb$, where $\phi$ is a fixed automorphism of~$\GG$ (one obtains a spindle whenever $\phi$ is an endomorphism).
\end{exam}

\begin{exam}[{\bf core}, or {\bf sandwich}, {\bf quandles}]
Let $\GG$ be a group. For~$\aa, \bb$ in~$\GG$, define $\aa \opR \bb:= \bb\aa\inv\bb$. Then $(\GG, \opR)$ is a quandle, as we find
$$(\aa \opR \bb) \opR \cc = \cc \bb\inv \aa\bb\inv \cc = (\aa \opR \cc) \opR (\bb \opR \cc).$$
The second operation coincides with the first one (``involutory'' quandle). Again variants are possible, involving powers or an involutory automorphism.
\end{exam}

%%%%
\subsection{More exotic examples}\label{SS:Exotic}

We complete the overview with a few less classical examples. Note that all of them are variations around conjugation.

\begin{exam}[\bf half-conjugacy racks]
Let $\GG$ be a group, and let $\XX$ be a subset of~$\GG$. For~$\aa, \bb$ in~$\GG$ and $\xx, \yy$ in~$\XX$, define 
\begin{equation}\label{E:Half}
(\xx, \aa) \opR (\yy, \bb):= (\xx, \aa\bb\inv \yy \bb).
\end{equation}
Then $(\XX \times \GG, \opR)$ is a rack, as we find
$$((\xx, \aa) \opR (\yy, \bb)) \opR (\zz, \cc) = (\xx, \aa\bb\inv \yy\bb\cc\inv \zz\cc) = ((\xx, \aa) \opR (\zz, \cc)) \opR ((\yy, \bb) \opR (\zz, \cc)).$$
This structure is denoted by~$\HalfConj(\XX,\GG)$. The second operation is then given by 
\begin{equation}\label{E:HalfBis}
(\zz, \cc) \opRb (\yy, \bb):= (\zz, \cc\bb\inv\yy\inv\bb).
\end{equation}
The name is natural, in that, when $\GG$ is a free group based on~$\XX$, the reduced words representing the elements of the associated conjugacy quandle are palindroms~$\aa\inv \cdot \xx \cdot \aa$, and half-conjugacy then corresponds to extracting~$\xx$ and the ``half-word''~$\aa$. The main difference between conjugacy and half-conjugacy is that the latter need not be idempotent: for~$\xx$ in~$\XX$, one finds $(\xx, 1) \opR (\xx, 1) = (\xx, \xx)$ and, more generally,~$(\xx, 1)^{[\nn]} = (\xx, \xx^{\nn - 1})$ for ~$\nn \ge 1$---see~Notation~\ref{N:Power} for the definition of~$\aa^{[\nn]}$. Note that, for for $\GG = (\ZZZZ, +)$ and~$\XX = \{1\}$, one finds $\HalfConj(\XX, \GG) \simeq \Augm$.
\end{exam}

\begin{exam}[\bf injection shelves]
Let~$\XX$ be a non-empty set, and let~$\Sym\XX$ be the group of all bijections from~$\XX$ to itself. Then one can consider the quandle~$\Conj(\Sym\XX)$ as described in Example~\ref{X:Conj}. When the group~$\Sym\XX$ is replaced with the monoid~$\Inj\XX$ of all injections from~$\XX$ to itself, conjugacy makes no more sense, but defining
$$\ff \opR \gg(\xx):= \gg(\ff(\gg\inv(\xx))) \text{ for $\xx \in \Im(\gg)$, and } \ff \opR \gg(\xx):= \xx \text{ otherwise}$$
still provides a shelf, denoted by~$\Conj(\Inj\XX)$. If $\XX$ is finite, $\Inj\XX$ coincides with~$\Sym\XX$, and $\Conj(\Inj\XX)$ is the quandle of Example~\ref{X:Conj}. But, if $\XX$ is infinite, the inclusion of~$\Inj\XX$ in~$\Sym\XX$ is strict, and the shelf~$\Conj(\Inj\XX)$ is neither a spindle nor a rack. Denoting $\XX \setminus \Im(\ff)$ by~$\coIm(\ff)$, one easily checks in~$\Conj(\Inj\XX)$ the following equalities
\begin{equation}\label{E:coIm}
\Im(\ff \opR \gg) = \gg(\Im(\ff)) \cup \coIm(\gg) \text{\quad and \quad }\coIm(\ff \opR \gg) = \gg(\coIm(\ff)).
\end{equation}
As a typical example, consider $\XX:= \ZZZZ_{>0}$, let~$\sh$ be the \emph{shift} mapping~$\nn \mapsto \nn + 1$, and let $\Shift$ be the subshelf of~$\Conj(\Inj\XX)$ generated by~$\sh$, see Fig.~\ref{F:Inj}. Then $\Shift$ is not a spindle, as we have $\sh \opR \sh(1) = 1 \not= \sh(1) = 2$. It is not a rack, either, as we have $\coIm(\sh) = \{1\}$ and, by~\eqref{E:coIm}, $\coIm(\ff \opR \sh) = \sh(\coIm(\ff)) \subseteq \{2, 3 \wdots\}$ for every~$\ff$, so the right translation by~$\sh$ is not surjective.
\end{exam}

\begin{figure}[htb]
\begin{picture}(44,6)(0,0)
\psline(0,4)(43,4)
\psline(0,0)(43,0)
\WPoint(0,0,0)\WPoint(4,0,1)\WPoint(8,0,2)\WPoint(12,0,3)\WPoint(16,0,4)\WPoint(20,0,5)\WPoint(24,0,6)\WPoint(28,0,7)\WPoint(32,0,8)\WPoint(36,0,9)\WPoint(40,0,10)
\WPoint(0,4,0h)\WPoint(4,4,1h)\WPoint(8,4,2h)\WPoint(12,4,3h)\WPoint(16,4,4h)\WPoint(20,4,5h)\WPoint(24,4,6h) \WPoint(28,4,7h) \WPoint(32,4,8h) \WPoint(36,4,9h)\WPoint(40,4,10h)
\ThickSegment(0h,1) \ThickSegment(1h,2) \ThickSegment(2h,3) \ThickSegment(3h,4) \ThickSegment(4h,5) \ThickSegment(5h,6) \ThickSegment(6h,7)\ThickSegment(7h,8)
\ThickSegment(8h,9) \ThickSegment(9h,10) 
\psline(0,4)(43,4)
\psline(0,0)(43,0)
\WPoint(0,0,0)\WPoint(4,0,1)\WPoint(8,0,2)\WPoint(12,0,3)\WPoint(16,0,4)\WPoint(20,0,5)\WPoint(24,0,6)\WPoint(28,0,7)\WPoint(32,0,8)\WPoint(36,0,9)\WPoint(40,0,10)
\WPoint(0,4,0h)\WPoint(4,4,1h)\WPoint(8,4,2h)\WPoint(12,4,3h)\WPoint(16,4,4h)\WPoint(20,4,5h)\WPoint(24,4,6h) \WPoint(28,4,7h) \WPoint(32,4,8h) \WPoint(36,4,9h)\WPoint(40,4,10h)
\put(-2,1){\llap{$\sh:$}}
\end{picture}

\begin{picture}(44,6)(0,0)
\psline(0,4)(43,4)
\psline(0,0)(43,0)
\WPoint(0,0,0)\WPoint(4,0,1)\WPoint(8,0,2)\WPoint(12,0,3)\WPoint(16,0,4)\WPoint(20,0,5)\WPoint(24,0,6)\WPoint(28,0,7)\WPoint(32,0,8)\WPoint(36,0,9)\WPoint(40,0,10)
\WPoint(0,4,0h)\WPoint(4,4,1h)\WPoint(8,4,2h)\WPoint(12,4,3h)\WPoint(16,4,4h)\WPoint(20,4,5h)\WPoint(24,4,6h) \WPoint(28,4,7h) \WPoint(32,4,8h) \WPoint(36,4,9h)\WPoint(40,4,10h)
\ThickSegment(0h,0) \ThickSegment(1h,2) \ThickSegment(2h,3) \ThickSegment(3h,4) \ThickSegment(4h,5) \ThickSegment(5h,6) \ThickSegment(6h,7)\ThickSegment(7h,8)
\ThickSegment(8h,9) \ThickSegment(9h,10) 
\psline(0,4)(43,4)
\psline(0,0)(43,0)
\WPoint(0,0,0)\WPoint(4,0,1)\WPoint(8,0,2)\WPoint(12,0,3)\WPoint(16,0,4)\WPoint(20,0,5)\WPoint(24,0,6)\WPoint(28,0,7)\WPoint(32,0,8)\WPoint(36,0,9)\WPoint(40,0,10)
\WPoint(0,4,0h)\WPoint(4,4,1h)\WPoint(8,4,2h)\WPoint(12,4,3h)\WPoint(16,4,4h)\WPoint(20,4,5h)\WPoint(24,4,6h) \WPoint(28,4,7h) \WPoint(32,4,8h) \WPoint(36,4,9h)\WPoint(40,4,10h)
\put(-2,1){\llap{$\sh \opR \sh:$}}
\end{picture}

\begin{picture}(44,6)(0,0)
\psline(0,4)(43,4)
\psline(0,0)(43,0)
\WPoint(0,0,0)\WPoint(4,0,1)\WPoint(8,0,2)\WPoint(12,0,3)\WPoint(16,0,4)\WPoint(20,0,5)\WPoint(24,0,6)\WPoint(28,0,7)\WPoint(32,0,8)\WPoint(36,0,9)\WPoint(40,0,10)
\WPoint(0,4,0h)\WPoint(4,4,1h)\WPoint(8,4,2h)\WPoint(12,4,3h)\WPoint(16,4,4h)\WPoint(20,4,5h)\WPoint(24,4,6h) \WPoint(28,4,7h) \WPoint(32,4,8h) \WPoint(36,4,9h)\WPoint(40,4,10h)
\ThickSegment(0h,0) \ThickSegment(1h,1) \ThickSegment(2h,3) \ThickSegment(3h,4) \ThickSegment(4h,5) \ThickSegment(5h,6) \ThickSegment(6h,7)\ThickSegment(7h,8)
\ThickSegment(8h,9) \ThickSegment(9h,10) 
\psline(0,4)(43,4)
\psline(0,0)(43,0)
\WPoint(0,0,0)\WPoint(4,0,1)\WPoint(8,0,2)\WPoint(12,0,3)\WPoint(16,0,4)\WPoint(20,0,5)\WPoint(24,0,6)\WPoint(28,0,7)\WPoint(32,0,8)\WPoint(36,0,9)\WPoint(40,0,10)
\WPoint(0,4,0h)\WPoint(4,4,1h)\WPoint(8,4,2h)\WPoint(12,4,3h)\WPoint(16,4,4h)\WPoint(20,4,5h)\WPoint(24,4,6h) \WPoint(28,4,7h) \WPoint(32,4,8h) \WPoint(36,4,9h)\WPoint(40,4,10h)
\put(-2,1){\llap{$(\sh \opR \sh) \opR \sh:$}}
\end{picture}

\begin{picture}(44,6)(0,0)
\psline(0,4)(43,4)
\psline(0,0)(43,0)
\WPoint(0,0,0)\WPoint(4,0,1)\WPoint(8,0,2)\WPoint(12,0,3)\WPoint(16,0,4)\WPoint(20,0,5)\WPoint(24,0,6)\WPoint(28,0,7)\WPoint(32,0,8)\WPoint(36,0,9)\WPoint(40,0,10)
\WPoint(0,4,0h)\WPoint(4,4,1h)\WPoint(8,4,2h)\WPoint(12,4,3h)\WPoint(16,4,4h)\WPoint(20,4,5h)\WPoint(24,4,6h) \WPoint(28,4,7h) \WPoint(32,4,8h) \WPoint(36,4,9h)\WPoint(40,4,10h)
\ThickSegment(0h,2) \ThickSegment(1h,1) \ThickSegment(2h,3) \ThickSegment(3h,4) \ThickSegment(4h,5) \ThickSegment(5h,6) \ThickSegment(6h,7)\ThickSegment(7h,8)
\ThickSegment(8h,9) \ThickSegment(9h,10) 
\psline(0,4)(43,4)
\psline(0,0)(43,0)
\WPoint(0,0,0)\WPoint(4,0,1)\WPoint(8,0,2)\WPoint(12,0,3)\WPoint(16,0,4)\WPoint(20,0,5)\WPoint(24,0,6)\WPoint(28,0,7)\WPoint(32,0,8)\WPoint(36,0,9)\WPoint(40,0,10)
\WPoint(0,4,0h)\WPoint(4,4,1h)\WPoint(8,4,2h)\WPoint(12,4,3h)\WPoint(16,4,4h)\WPoint(20,4,5h)\WPoint(24,4,6h) \WPoint(28,4,7h) \WPoint(32,4,8h) \WPoint(36,4,9h)\WPoint(40,4,10h)
\put(-2,1){\llap{$\sh \opR (\sh \opR \sh):$}}
\end{picture}
\caption{\sf\small A few elements of the shelf~$\Shift$, which is neither a spindle, nor a rack; here, injections go from the top line to the bottom one.}
\label{F:Inj}
\end{figure}
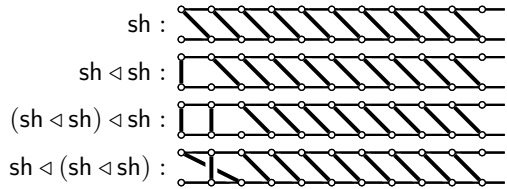

Although its definition is quite simple, very little is known so far about~$\Shift$, see~\cite{Deo}. It is easy to deduce from the criterion of Lemma~\ref{L:Freeness} below that $\Shift$ is not a free shelf. However, almost nothing is known about the following problem:

\begin{ques}
Find a presentation of the shelf~$\Shift$.
\end{ques} 

\begin{exam}[\bf braid shelf]\label{X:Braids}
Artin's braid group~$\Bi$ is the group defined by the (infinite) presentation
\begin{equation}\label{E:BraidPres}
\bigg\langle \sig1, \sig2, ... \ \bigg\vert\ 
\begin{matrix}
\sig\ii \sig j = \sig j \sig\ii 
&\text{for} &\vert i-j \vert\ge 2\\
\sig\ii \sig j \sig\ii = \sig j \sig\ii \sig j 
&\text{for} &\vert i-j \vert = 1
\end{matrix}
\ \bigg\rangle.
\end{equation}
By construction, $\Bi$ is the direct limit when $\nn$ grows to~$\infty$ of the groups~$\BB_\nn$ generated by~$\nn - 1$ generators~$\sig1 \wdots \sig{\nn - 1}$ submitted to the relations of~\eqref{E:BraidPres}, when $\BB_\nn$ is embedded in~$\BB_{\nn + 1}$ by adding~$\sig\nn$. It is well known that the group~$\BB_\nn$ can be realized as the group of isotopy classes of $\nn$~strand braid diagrams, and as the mapping class group of an $\nn$-punctured disk---see for instance~\cite{Bir} or~\cite{Dhr}. It directly follows from the presentation~\eqref{E:BraidPres} that the map~$\sh: \sig\ii \mapsto\nobreak \sig{\ii + 1}$ extends into a non-surjective endomorphism of~$\Bi$ (``\emph{shift} endomorphism'').

Define on~$\Bi$ a binary operation~$\opR$ by
\begin{equation}\label{E:BraidShelf}
\aa \opR \bb:= \sh(\bb)\inv \, \sig1 \, \sh(\aa) \, \bb,
\end{equation}
a shifted conjugation with the factor~$\sig1$ added. Then one checks the equalities
\begin{equation}
(\aa \opR \bb) \opR \cc = \sh(\cc)\inv \, \sh^2(\bb)\inv \, \sig1\sig2 \, \sh^2(\aa) \, \sh(\bb) \, \cc = (\aa \opR \cc) \opR (\bb \opR \cc),
\end{equation}
so $(\Bi, \opR)$ is a shelf. It is easy to check that this shelf is neither a spindle, nor a rack: for instance, we have $1 \opR 1 = \sig1 \not= 1$, and $\aa \opR 1 = 1$ is impossible, as we have $\aa \opR 1 = \sig1 \, \sh(\aa)$, and $\sig1$ does not belong to the image of~$\sh$. For more information about the braid shelf, we refer to Section~\ref{SS:Semantic} below and to~\cite{Dja}. 
\end{exam}

We can try to further generalize Example~\ref{X:Braids}. Let $\GG$ be a group, let~$\ss$ a fixed element of~$\GG$, and let $\phi$ is an endomorphism of~$\GG$. Copying~\eqref{E:BraidShelf}, define
\begin{equation}
\aa \opR \bb:= \phi(\bb)\inv \, \ss \, \phi(\aa) \, \bb.
\end{equation}
Consider~$(\GG, \opR)$. This is a shelf if, and only if, the element~$\ss$ commutes with every element in the image of~$\phi^2$ and satisfies the relation $\ss\phi(\ss)\ss = \phi(\ss)\ss\phi(\ss)$, which means that the subgroup of~$\GG$ generated by~$\ss$ is a homomorphic image of the braid group~$\Bi$. Thus, essentially, the above construction works only for the braid group and its quotients.

A typical example appears when \eqref{E:BraidShelf} is applied in the quotient~$\Sym\infty$ of~$\Bi$ obtained by adding the involutivity relations~$\sigg12 = 1$: then $\Sym\infty$ is the group of all permutations of~$\ZZZZ_{>0}$ that move finitely many points only. {\it Mutatis mutandis} (left-shelves are considered), it is proved in~\cite[Cor.~I.4.18]{Dgd}  that mapping~$\ff$ to the composed map~$\sh \comp \ff$ defines an (injective) homomorphism from~$(\Sym\infty, \opR)$ into the shelf~$\Conj(\Inj{\ZZZZ_{>0}})$.

Extensions and variations of the braid shelf appear in~\cite{Dfn, Dgb, Dhe}.

\begin{exam}[\bf iterations of an elementary embedding]\label{X:Emb}
Large cardinal axioms play a central r\^ole in modern set theory. A number of such axioms involve elementary embeddings, which are mappings from a set to itself that preserve all notions that are definable in first order logic from the membership relation~$\in$. For every ordinal~$\alpha$, one denotes by~$\VV_\alpha$ the set obtained from~$\varnothing$ by applying $\alpha$~times the powerset operation. Let~$\mathcal{E}_\lambda$ denote the family of all elementary embeddings from~$\VV_\lambda$ to itself. One says that an ordinal~$\lambda$ is a \emph{Laver cardinal} if $\mathcal{E}_\lambda$ contains at least one element that is not the identity. The point is as follows. The specific properties of the sets~$\VV_\alpha$ imply that, if $\ii$ and~$\jj$ belong to~$\mathcal{E}_\lambda$, then one can \emph{apply}~$\ii$ to~$\jj$ and obtain a new element~$\ii[\jj]$ of~$\mathcal{E}_\lambda$. Moreover, for~$\ii, \jj, \kk, \ell$ in~$\mathcal{E}_\lambda$, if $\ell = \jj[\kk]$ holds, then so does $\ii[\ell] = \ii[\jj][\ii[\kk]]$, that is
\begin{equation}\label{E:LDSet}
\ii[\jj[\kk]] = \ii[\jj][\ii[\kk]].
\end{equation}
The reason for that is that $\ii$ preserves every definable notion, in particular the operation of applying a function to an argument. Then \eqref{E:LDSet} means that the binary operation $(\ii, \jj) \mapsto \ii[\jj]$ on~$\mathcal{E}_\lambda$ obeys the selfdistributivity law~$\LD$. When $\jj$ is the identity mapping, the closure of~$\{\jj\}$ under the ``application'' operation is trivial. But assume that $\lambda$ is a Laver cardinal, and $\jj$ is an element of~$\mathcal{E}_\lambda$ that is not the identity. Then the closure of~$\{\jj\}$ under the ``application'' operation is a nontrivial left-shelf~$\Iter(\jj)$. This  left-shelf has fascinating properties, yet its structure is still far from being understood completely~\cite{Lvb, Dou, DoJ, Jech, Dgs, Dimo}. 
\end{exam}

\begin{exam}[\bf Laver tables]
For every positive integer~$\NN$, there exists a unique binary operation~$\op$ on~$\{1 \wdots \NN\}$ that obeys the law
$$\xx \op (\yy \op 1) = (\xx \op \yy) \op (\xx \op 1),$$
and the structure so obtained is a left-shelf if, and only if, $\NN$ is a power of~$2$. The structure with~$2^\nn$~elements is called the \emph{$\nn$th Laver table}, usually denoted by~$\AA_\nn$. Laver tables appear as the elementary building bricks for constructing all (finite) monogenerated shelves~\cite{Drd, Drf, Smed}, and can adequately be seen as counterparts of cyclic groups in the SD-world. We refer to the survey by A.\,Dr\'apal in this volume for a more complete introduction~\cite{DraS}. Let us simply mention here that Laver tables are natural quotients of the left-shelf~$\Iter(\jj)$ of Example~\ref{X:Emb}~\cite{Lvd}, and that some of their combinatorial properties are so far established only assuming the existence of a Laver cardinal, which is an unprovable axiom, see for instance~\cite{Dgs}.
\end{exam}

%%%%
\subsection{An overview of the SD-world}\label{SS:Chart}

We present in Fig.~\ref{F:Chart} a sort of chart of the SD-world as we can conceive it now. Here we leave the case of spindles aside, and, therefore, we have three classes included one in the other, namely quandles, racks, and shelves. The lanscape is rather different according to whether we consider quandles and racks, or general shelves, and, on the other hand, whether we consider monogenerated structures, or structures with more than one generator.

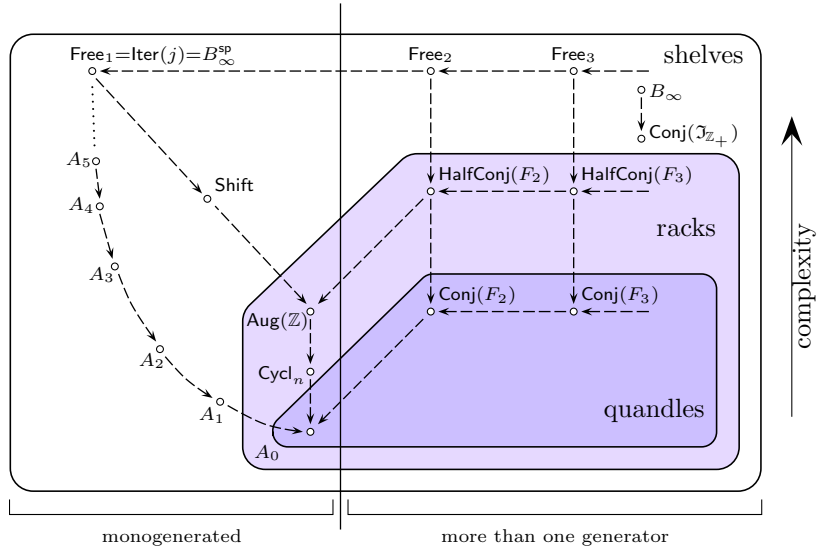
\begin{figure}[htb]
\begin{picture}(100,70)(2,-5)
\everymath{\color{black}}%
\psset{unit=1mm}
\psframe[linewidth=0.6pt,framearc=.1](0,0)(100,61)
\pspolygon[linearc=3,linewidth=0.6pt,fillstyle=solid,fillcolor=PcolorAAA](31,3)(97,3)(97,45)(53,45)(31,24)
\pspolygon[linearc=2,linewidth=0.6pt,fillstyle=solid,fillcolor=PcolorAA](35,6)(94,6)(94,29)(55.5,29)(35,9)
\put(87,57){{shelves}}
\put(86,34){{racks}}
\put(79,10){{quandles}}

\psline[linewidth=.5pt](44,65)(44,-5)
\psline[linewidth=.4pt](0,-1)(0,-3)(43,-3)(43,-1)
\pcline[linewidth=.4pt](0,-3)(43,-3)\tbput{\scriptsize monogenerated}
\psline[linewidth=.4pt](45,-1)(45,-3)(100,-3)(100,-1)
\pcline[linewidth=.4pt](45,-3)(100,-3)\tbput{\scriptsize more than one generator}

\pcline[linewidth=.4pt,arrowsize=8pt]{->}(104,10)(104,50)
\put(104,19){\rotatebox{90}{\hbox{complexity}}}

\pscircle[linewidth=.4pt,fillstyle=solid](40,8){1.5pt}
\put(32.5,4.5){$\scriptstyle\AA_0$}

\pscircle[linewidth=.4pt,fillstyle=solid](56,24){1.5pt}
\put(57,25.5){$\scriptstyle\Conj(\FF_2)$}
\pscircle[linewidth=.4pt,fillstyle=solid](75,24){1.5pt}
\put(76,25.5){$\scriptstyle\Conj(\FF_3)$}

\psline[style=thinexist]{->}(55,23)(41,9)
\psline[style=thinexist]{->}(85,24)(76,24)
\psline[style=thinexist]{->}(74,24)(57,24)

\pscircle[linewidth=.4pt,fillstyle=solid](40,24){1.5pt}
\put(31.5,22){$\scriptstyle\Augm$}
\pscircle[linewidth=.4pt,fillstyle=solid](40,16){1.5pt}
\put(33,15){$\scriptstyle\Cycl_\nn$}
\psline[style=thinexist]{->}(40,23)(40,17)
\psline[style=thinexist]{->}(40,15)(40,9)

\pscircle[linewidth=.4pt,fillstyle=solid](56,40){1.5pt}
\put(57,41.5){$\scriptstyle\HalfConj(\FF_2)$}
\psline[style=thinexist]{->}(55,39)(41,25)
\psline[style=thinexist]{->}(56,39)(56,25)

\pscircle[linewidth=.4pt,fillstyle=solid](75,40){1.5pt}
\put(76,41.5){$\scriptstyle\HalfConj(\FF_3)$}
\psline[style=thinexist]{->}(85,40)(76,40)
\psline[style=thinexist]{->}(74,40)(57,40)
\psline[style=thinexist]{->}(75,39)(75,25)

\pscircle[linewidth=.4pt,fillstyle=solid](11,56){1.5pt}
\put(8,57.5){$\scriptstyle\Free_1 = \Iter(\jj) = \BRsp$}
\psline[style=thinexist]{->}(11.5,55)(39.5,25)

\pscircle[linewidth=.4pt,fillstyle=solid](56,56){1.5pt}
\put(53,57.5){$\scriptstyle\Free_2$}
\psline[style=thinexist]{->}(55,56)(12,56)
\psline[style=thinexist]{->}(56,55)(56,41)

\pscircle[linewidth=.4pt,fillstyle=solid](75,56){1.5pt}
\put(72,57.5){$\scriptstyle\Free_3$}
\psline[style=thinexist]{->}(75,55)(75,41)
\psline[style=thinexist]{->}(74,56)(57,56)
\psline[style=thinexist]{->}(85,56)(76,56)

\pscircle[linewidth=.4pt,fillstyle=solid](28,12){1.5pt}
\put(25.2,9.5){$\scriptstyle{\AA_1}$}
\pscurve[style=thinexist]{->}(29,11.5)(34,9)(39,8)

\pscircle[linewidth=.4pt,fillstyle=solid](20,19){1.5pt}
\put(17.2,16.8){$\scriptstyle{\AA_2}$}
\pscurve[style=thinexist]{->}(20.7,18)(23.5,15)(27,12.5)

\pscircle[linewidth=.4pt,fillstyle=solid](14,30){1.5pt}
\put(10.5,28.2){$\scriptstyle{\AA_3}$}
\pscurve[style=thinexist]{->}(14.5,29)(16.2,25)(19.5,20)

\pscircle[linewidth=.4pt,fillstyle=solid](12,38){1.5pt}
\put(7.8,37.5){$\scriptstyle{\AA_4}$}
\psline[style=thinexist]{->}(12,37)(13.7,31)

\pscircle[linewidth=.4pt,fillstyle=solid](11.5,44){1.5pt}
\put(7.5,43.5){$\scriptstyle{\AA_5}$}
\psline[style=thinexist]{->}(11.5,43)(12,39)
\psline[linestyle=dotted,dotsep=2pt](11,54)(11.3,46)

\pscircle[linewidth=.4pt,fillstyle=solid](84,53.5){1.5pt}
\put(85,52.5){$\scriptstyle\Bi$}
\pscircle[linewidth=.4pt,fillstyle=solid](84,47){1.5pt}
\put(85,47){$\scriptstyle\Conj(\Inj{\ZZZZ_{+}})$}
\psline[style=thinexist]{->}(84,52.5)(84,48)

\psline[linewidth=2pt,linecolor=white](11.5,55)(27.3,38)
\pscircle[linewidth=.4pt,fillstyle=solid](26.3,39){1.5pt}
\put(27.3,40){$\scriptstyle\Shift$}
\psline[style=thinexist]{->}(11.5,55)(25.6,39.7)
\end{picture}
\caption[]{\sf\small A chart of the SD-world, with dashed arrows for quotients. As every shelf with~$\nn$ generators is a quotient of the free shelf~$\Free_\nn$ on~$\nn$~generators, it is natural to put~$\Free_\nn$ on the top of the diagram; as will be seen in Section~\ref{SS:TwoGener}, $\Free_\nn$ is not really more complicated than~$\Free_1$ for~$\nn \ge 2$, so we put them on the same complexity line. We represent similarly free racks, that is, half-conjugacy racks associated with free groups, and free quandles, that is, conjugacy quandles associated with free groups (Prop.~\ref{P:FreeConj}). Here, the complexity drops for one generator. On the  monogenerated side, quandles and racks are almost trivial, whereas shelves are many, with a spine made by Laver tables, which form an inverse system with limit~$\Free_1$ (whenever a Laver cardinal exists); by contrast, on the multigenerated case, lots of racks and quandles have been investigated, whereas not so many really new examples of shelves are known.}
\label{F:Chart}
\end{figure}

%%%%
\section{Word problem, the case of shelves I}\label{S:WPLD1}

For every algebraic law or family of algebraic laws, a basic question is the associated word problem, namely the question of deciding whether or not two terms in the absolutely free algebra for the relevant operations are equivalent with respect to the congruence generated by the law(s). In other words, whether or not they represent the same element in the corresponding free structure. Thus four word problems respectively involving shelves, spindles, racks, and quandles occur. In this section and the next one, we begin with the case of shelves, that is, of selfdistributivity alone: in this case, several solutions are known, none of which is trivial, and we explain them. Different solutions can be considered: \emph{syntactic} solutions aim at manipulating terms and deciding their possible equivalence directly, whereas \emph{semantic} solutions consist in evaluating terms in some particular concrete structure(s): when the latter is free, or includes a free structure, two terms are equivalent if, and only if, their evaluations coincide. For instance, in the case of associativity, a syntactic solution may consist in removing parentheses and checking whether the remaining words coincide, whereas a semantic solution may consist in evaluating terms in the free monoids~$\XX^*$---essentially the same thing in this trivial case. 

This first part about word problems is divided into three sections. Section~\ref{SS:Comparison} is preparatory and explains ``comparison property'', an important feature of selfdistributivity. In Section~\ref{SS:CondSyntactic}, we derive a conditional solution for the word problem in the case of one variable, namely one that is valid provided there exists a shelf with a certain ``acyclicity'' property. Then, in Section~\ref{SS:Syntactic}, we describe two examples of such shelves and thus solve the word problem. Finally, we describe in Section~\ref{SS:Semantic} a semantic solution based on the braid shelf of Example~\ref{X:Braids} that is more efficient than the syntactic solution so far considered.

%%%%
\subsection{The comparison property}\label{SS:Comparison}

To make our survey compatible with the existing literature~\cite{Dgd, Lvb}, we switch hereafter to the left version of selfdistributivity~$\LD$ and, accordingly, use~$\op$ instead of~$\opR$.

We denote by~$\TERM\op\XX$ the family of all well formed terms constructed from a set~$\XX$ (usually $\{\xx_1, \xx_2, ...\}$, with elements called \emph{variables}) using the operation~$\op$, \ie, the absolutely free $\op$-algebra based on~$\XX$. We denote by~$\eqLD$ the congruence on $\TERM\op\XX$ generated by the instances of the law~$\LD$, \ie, the least equivalence relation that is compatible with the operations and contains the said instances. By construction, the quotient-structure~$\TERM\op\XX{/}{\eqLD}$ is a left-shelf generated by~$\XX$, and it is universal for all such left-shelves, so it is a free left-shelf based on~$\XX$.

We begin with a preparatory result about selfdistributivity in the case of terms in one variable. To state it, we start with the natural notion of a \emph{divisor}.

\begin{defi}[\bf division relation]
For $\op$ a binary operation on~$\SS$ and~$\aa, \bb$ in~$\SS$, we say that $\aa$ \emph{divides}~$\bb$, written $\aa \div \bb$, if $\aa \op \xx = \bb$ holds for some~$\xx$. We write~$\divs$ for the transitive closure of~$\div$.
\end{defi}

If $\op$ is associative, there is no need to distinguish between~$\div$ and~$\divs$, since we then have $(\aa \op \xx_1) \op \xx_2 = \aa \op (\xx_1 \op \xx_2)$, but, in general, $\div$ need not be transitive.

The following result about selfdistributivity is then fundamental:

\begin{lemm}[\bf comparison property]\label{L:Comp}
If $\SS$ is a monogenerated left-shelf and~$\aa, \bb$ belong to~$\SS$, then at least one of $\aa \divs \bb$, $\aa = \bb$, $\bb \divs \aa$ holds.
\end{lemm}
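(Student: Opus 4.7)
The plan is to pick a generator~$\xx$ of~$\SS$ and representative terms $t_a, t_b$ for~$\aa$ and~$\bb$ in the term algebra~$\TERM\op{\{\xx\}}$, then argue by well-founded induction on the pair~$(|t_a|, |t_b|)$, where $|\cdot|$ denotes term complexity. Two easy consequences of~LD serve as the main tools throughout: (i)~if $\aa \div \bb$ via $\bb = \aa \op \ee$, then $\cc \op \bb = \cc \op (\aa \op \ee) = (\cc \op \aa) \op (\cc \op \ee)$, so left translation preserves $\div$, hence also~$\divs$; and (ii)~$\aa \div \aa \op \bb$ holds automatically.

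A preliminary step is a \emph{minimality} lemma: every $\aa \in \SS$ satisfies $\aa = \xx$ or $\xx \divs \aa$. This is a straightforward induction on~$t_a$: in the inductive case $\aa = \aa_1 \op \aa_2$, observation~(ii) gives $\aa_1 \div \aa$, and the hypothesis applied to~$\aa_1$ supplies either $\aa_1 = \xx$, yielding $\xx \div \aa$, or $\xx \divs \aa_1 \divs \aa$. This lemma disposes of the base case $t_a = \xx$ (and, symmetrically, $t_b = \xx$) of the main induction.

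In the inductive step, write $\aa = \aa_1 \op \aa_2$, $\bb = \bb_1 \op \bb_2$, and abbreviate ``$\aa = \bb$ or $\aa \divs \bb$'' as~$\aa \le \bb$. The inductive hypothesis supplies $\le$-comparability for every pair among the~$\aa_i$'s and~$\bb_j$'s, and (via a strict drop in the second component) also for the pair $(\aa, \bb_1)$. The easy sub-case is $\aa_1 = \bb_1$: the comparison of~$\aa_2$ and~$\bb_2$ is promoted through~(i) to that of $\aa = \aa_1 \op \aa_2$ and $\bb = \aa_1 \op \bb_2$. Whenever $\aa \le \bb_1$, observation~(ii) gives $\aa \divs \bb_1 \div \bb$, hence $\aa \le \bb$. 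The remaining configuration (up to symmetry) is $\aa_1 \divs \bb_1$ strictly combined with $\bb_1 \divs \aa$ strictly, which still has to be settled.

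The main obstacle lies precisely in this last configuration: one has a chain $\aa_1 \divs \bb_1 \divs \aa$ and wants to compare the strictly larger element~$\aa = \aa_1 \op \aa_2$ with $\bb = \bb_1 \op \bb_2$. Observation~(i) transports divisibility only on the left, with no right-sided analogue---an asymmetry built into~LD---so extending $\aa_1 \divs \bb_1$ into a comparison of $\aa_1 \op \aa_2$ with $\bb_1 \op \bb_2$ is not a formal consequence. The plan for closing the gap is to walk along a chain $\aa_1 = \cc_0 \div \cc_1 \div \cdots \div \cc_n = \bb_1$ realising $\aa_1 \divs \bb_1$ and, at each step, apply the LD identity $(\cc_k \op \ee_{k+1}) \op (\cc_k \op \yy) = \cc_k \op (\ee_{k+1} \op \yy)$ to expose a common left factor, then chain~$\aa$ through $\bb_1 \op \aa_2$ to~$\bb$ via~(i). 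Making this work on the nose likely requires either a stronger structural fact (of ``absorption'' flavour, identifying $\div$ with $\divs$) or a finer auxiliary induction; I expect this to be the delicate point, and suspect it is the very reason the paper will develop the LD-expansion and confluence machinery of Section~\ref{S:WPLD2}.
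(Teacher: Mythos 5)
Your reduction to the free monogenerated case, the observations (i) and (ii), the minimality lemma, and the easy sub-cases are all correct, and you have located the difficulty accurately. But the proposal does not prove the lemma: the remaining configuration of the inductive step is left open, and it is not a corner case --- it is the entire content of the theorem. A structural induction on the sizes of chosen representative terms cannot close it, because in the free monogenerated left-shelf the relation $\divsLD$ is not determined by how immediate subterms compare: LD-equivalence can completely reshape a term (for example $\xx \op (\xx \op \xx) \eqLD (\xx \op \xx) \op (\xx \op \xx)$, whose left subterms are $\xx$ and $\xx \op \xx$), so from $\aa_1 \divs \bb_1$ plus data about $\aa_2, \bb_2$ nothing about $\aa$ versus $\bb$ follows formally, as you concede. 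Your proposed repair --- walking along a chain $\aa_1 = \cc_0 \div \cc_1 \div \cdots \div \cc_\nn = \bb_1$ and applying LD at each step --- founders for two reasons: the witnesses $\ee_{\kk+1}$ realising each step $\cc_\kk \div \cc_{\kk+1}$ are arbitrary elements of~$\SS$ with no size control relative to the representatives of $\aa$ and~$\bb$, so the induction measure is destroyed the moment they are invoked; and the identity you cite rewrites $\cc_{\kk+1} \op (\cc_\kk \op \yy)$, not $\cc_{\kk+1} \op \bb_2$ for the given~$\bb_2$, so it never produces the needed comparison of $\aa_1 \op \aa_2$ with $\bb_1 \op \bb_2$.

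The paper's proof abandons the structural induction for a global argument, and your guess about its ingredients is essentially right (they appear already in Section~\ref{SS:Comparison}, not Section~\ref{S:WPLD2}). The absorption property (Lemma~\ref{L:Absorp}) gives $\TT \op \xx^{[\nn]} \eqLD \xx^{[\nn+1]} \eqLD \TT' \op \xx^{[\nn]}$ for $\nn$ large; the confluence property (Lemma~\ref{L:Confl}), proved via the operator~$\partial$, then yields a common LD-expansion~$\TT''$ of $\TT \op \xx^{[\nn]}$ and $\TT' \op \xx^{[\nn]}$; and the elementary fact that an LD-expansion of a non-variable term carries the left subterm to an LD-expansion of some iterated left subterm places both $\TT$ and~$\TT'$, up to~$\eqLD$, among the terms $\LS^\rr(\TT'')$ of a single term~$\TT''$, where comparability is read off by comparing the two exponents~$\rr$. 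The common expansion~$\TT''$ can be enormously larger than $\TT$ and~$\TT'$ (the only known bounds are towers of exponentials), which is precisely why no induction bounded by the sizes of the original representatives can succeed. Your proposal should therefore be regarded as a correct reduction of the lemma to its genuinely hard case, not as a proof.
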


If $\phi$ is a morphism, $\aa \divs \bb$ implies $\phi(\aa) \divs \phi(\bb)$, so the point is to prove Lemma~\ref{L:Comp} when $\SS$ is a free left-shelf generated by a single element, say~$\xx$. By definition, the latter consists of the $\eqLD$-classes of terms in~$\TERM\op\xx$---we write $\TERM\op\xx$ for $\TERM{\op}{\{\xx\}}$.

\begin{nota}[\bf relation~$\divLD$]
For $\TT, \TT'$ in~$\TERM\op\xx$, we write $\TT \divLD \TT'$ if there exists~$\TT_1$ satisfying $\TT \op \TT_1 \eqLD \TT'$, and $\divsLD$ for the transitive closure of~$\divLD$. 
\end{nota}

Then, an equivalent form of Lemma~\ref{L:Comp} is

\begin{lemm}[{\bf comparison property}, reformulated]\label{L:Comp2}
For all~$\TT, \TT'$ in~$\TERM\op\xx$, at least one of $\TT \divsLD \TT'$, $\TT \eqLD \TT'$, or $\TT' \divsLD \TT$ holds. 
\end{lemm}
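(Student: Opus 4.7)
The plan is to prove Lemma~\ref{L:Comp2} by a nested induction, with the hard work concentrated in the case where $\TT'_1$ divides~$\TT$.

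I would first record the preparatory fact that, for every~$\TT$ in~$\TERM\op\xx$ with $\TT \neq \xx$, one has $\xx \divsLD \TT$. This follows by structural induction on $\TT = \TT_1 \op \TT_2$: either $\TT_1 = \xx$, so that $\xx \op \TT_2 = \TT$ directly gives $\xx \divLD \TT$; or $\TT_1 \neq \xx$, and the IH yields $\xx \divsLD \TT_1$, after which transitivity with $\TT_1 \divLD \TT_1 \op \TT_2 = \TT$ yields $\xx \divsLD \TT$.

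For the main argument I would proceed by induction on the number of operation symbols in~$\TT'$, with $\TT$ arbitrary. The base $\TT' = \xx$ is covered by the preparation (with roles swapped when $\TT \neq \xx$). For the inductive step, write $\TT' = \TT'_1 \op \TT'_2$ and apply the IH to $(\TT, \TT'_1)$. Two of the three outcomes are immediate: if $\TT \divsLD \TT'_1$ or $\TT \eqLD \TT'_1$, then combining with $\TT'_1 \divLD \TT'_1 \op \TT'_2 = \TT'$ gives $\TT \divsLD \TT'$ by transitivity.

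The hard case is $\TT'_1 \divsLD \TT$. In the single-step subcase $\TT'_1 \op W \eqLD \TT$ for some~$W$, I would use the IH to compare $W$ with~$\TT'_2$. If $W \eqLD \TT'_2$, then $\TT \eqLD \TT'_1 \op \TT'_2 = \TT'$. If $W \divLD \TT'_2$ via $W \op \ZZ \eqLD \TT'_2$, then an LD-manipulation gives
\begin{equation*}
\TT' \eqLD \TT'_1 \op (W \op \ZZ) \eqLD (\TT'_1 \op W) \op (\TT'_1 \op \ZZ) \eqLD \TT \op (\TT'_1 \op \ZZ),
\end{equation*}
so $\TT \divLD \TT'$; the symmetric subcase $\TT'_2 \divLD W$ yields $\TT' \divLD \TT$ by the mirror computation $\TT \eqLD \TT'_1 \op (\TT'_2 \op Z') \eqLD \TT' \op (\TT'_1 \op Z')$. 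A general chain $\TT'_1 \divLD U_1 \divLD \cdots \divLD U_k = \TT$ would be handled by a secondary induction on~$k$, feeding the trichotomy back at each step: at the first link the single-step analysis places $U_1$ on one of the three sides relative to~$\TT'$; two of those placements transitively settle the comparison of~$\TT$ with~$\TT'$, and the remaining placement $U_1 \divsLD \TT'$ leaves a shorter chain $U_1 \divLD \cdots \divLD \TT$ to analyse.

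The main obstacle is justifying the application of IH to the pair $(W, \TT'_2)$: although $|\TT'_2|$ is smaller than $|\TT'|$, the witness~$W$ is neither a subterm of~$\TT$ nor of~$\TT'$, and because $\eqLD$-equivalence can expand the size of terms, $|W|$ is not controlled by $|\TT|$ or $|\TT'|$. Getting around this requires either a well-founded ordering on $\eqLD$-classes (rather than on terms themselves), or the confluence machinery for LD-expansion, which allows one to choose $W$ of controlled complexity relative to~$\TT$. This is the genuinely delicate combinatorial heart of the argument, and presumably the reason the author defers the full technical development to the monograph~\cite{Dgd}.
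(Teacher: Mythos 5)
Your single-step computations are correct, and the way you quantify the induction (on the size of~$\TT'$ only, with the first term universally quantified) actually legitimizes applying the IH to the pair~$(W, \TT'_2)$ even though $W$ is uncontrolled---so the obstacle you flag at the end is not quite the real one. The genuine gap sits in the secondary induction on the chain $\TT'_1 \divLD U_1 \divLD \pdots \divLD U_k \eqLD \TT$. At the first link, write $U_1 \eqLD \TT'_1 \op W_1$ and compare $W_1$ with~$\TT'_2$ by the IH. Two outcomes ($W_1 \eqLD \TT'_2$ and $\TT'_2 \divsLD W_1$) do settle the comparison, as you say. But in the third outcome $W_1 \divsLD \TT'_2$, lifting through $V \mapsto \TT'_1 \op V$ gives $U_1 \divsLD \TT'$, while you also have $U_1 \divsLD \TT$ via the shorter chain: you are now required to compare two terms $\TT$ and~$\TT'$ that are both proper $\divsLD$-multiples of the common term~$U_1$, via first-link witnesses ($W_2$ on one side, a term of the form $\TT'_1 \op Z_0$ on the other) neither of which has controlled size. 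That is the full comparison property all over again, not an instance of either induction hypothesis, and the ``shorter chain'' on one side buys nothing because a fresh chain of unbounded length has just been created on the other side. The induction does not close.

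This is precisely the difficulty the paper sidesteps by a different route: the absorption property gives $\TT \op \xx^{[\nn]} \eqLD \xx^{[\nn+1]} \eqLD \TT' \op \xx^{[\nn]}$ for $\nn$ large, and the confluence property (proved via the operator~$\partial$) then yields a single common LD-expansion $\TT'' = \partial^\pp \xx^{[\nn+1]}$ of both sides; since an LD-expansion carries the left subterm to an iterated left subterm, $\TT$ and~$\TT'$ become LD-equivalent to $\LS^\rr(\TT'')$ and $\LS^{\rr'}(\TT'')$ for some $\rr, \rr'$, and the trichotomy is read off by comparing $\rr$ with~$\rr'$. All comparisons then take place inside one explicitly constructed term, so no unbounded recursion on witnesses ever arises. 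Repairing your argument would require exactly the confluence machinery you allude to, at which point you would essentially be reconstructing the paper's proof.
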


Elements of~$\TERM\op\XX$ can be seen as binary trees with internal nodes labeled~$\op$ and leaves labeled with elements of~$\XX$ (thus variables). What Lemma~\ref{L:Comp2} says is that, if $\TT$ and~$\TT'$ are any two terms in one variable, then, up to $\LD$-equivalence, one is always an iterated left subterm of the other. When associativity is considered, the result is trivial, since a term in one variable is just a power. In the case of selfdistributivity, the result, which is difficult, was proved in~\cite{Deq}, and independently reproved shortly after by R.\,Laver in~\cite{Lvb} using a disjoint argument. Here we sketch the two steps of the former proof, which is simpler.

First, by definition, the relation~$\eqLD$ is the congruence on~$\TERM\op\XX$ generated by all pairs of terms of the form 
\begin{equation}\label{E:LDTerms}
(\ \TT_1 \op (\TT_2 \op \TT_3)\ , \ (\TT_1 \op \TT_2) \op (\TT_1 \op \TT_3)\ ).
\end{equation}
We consider an oriented, non-symmetric version of the latter relation.

\begin{defi}[\bf LD-expansion]
We let~$\expLD$ be the smallest reflexive and transitive relation on~$\TERM\op\XX$ that is compatible with multiplication and contains all pairs~\eqref{E:LDTerms}. When~$\TT \expLD \TT'$ holds, we say that $\TT'$ is an \emph{LD-expansion} of~$\TT$.
\end{defi}

Thus, $\TT'$ is an LD-expansion of~$\TT$ if $\TT'$ can be obtained from~$\TT$ by applying the LD law, but always in the expanding direction. Clearly, $\TT \expLD \TT'$ implies~$\TT \eqLD \TT'$, but the converse implication fails, as $\expLD$ is not symmetric: $\xx \op (\xx \op \xx) \expLD (\xx \op \xx) \op (\xx \op \xx)$ holds, but $(\xx \op \xx) \op (\xx \op \xx) \expLD \xx \op (\xx \op \xx)$ does not.

\begin{lemm}[\bf confluence property]\label{L:Confl}
Two LD-equivalent terms admit a common LD-expansion.
\end{lemm}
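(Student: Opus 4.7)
The plan is to reduce confluence of $\eqLD$ to a confluence property of the oriented relation $\expLD$ alone, and then prove the latter by a local analysis of LD-rewrites combined with a diamond-pasting argument.

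First, I would observe that $\eqLD$ is exactly the symmetric closure of $\expLD$. Each generating pair $(T_1 \op (T_2 \op T_3),\ (T_1 \op T_2) \op (T_1 \op T_3))$ of $\eqLD$ is itself in $\expLD$ (read left-to-right), while $\expLD$ is already reflexive, transitive, and compatible with $\op$; hence $T \eqLD T'$ iff there is a zigzag $T = S_0, S_1, \ldots, S_n = T'$ in which each consecutive pair is related by $\expLD$ in one of the two orientations. Granting confluence of $\expLD$---that $S \expLD T$ and $S \expLD T'$ imply the existence of $U$ with $T \expLD U$ and $T' \expLD U$---a straightforward induction on the length $n$ of the zigzag collapses it to a common expansion: at each peak where $S_{i-1} \expLD S_i$ and $S_{i+1} \expLD S_i$, confluence supplies a common descendant, and transitivity together with compatibility propagate this descendant through the rest of the chain.

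To prove confluence of $\expLD$, I would first establish local confluence of the elementary (one-position) LD-rewrite relation: if $S$ rewrites in one step to both $T$ and $T'$, then some $U$ satisfies $T \expLD U$ and $T' \expLD U$. Case analysis on the tree-positions $p, p'$ of the two rewrites leaves three situations: $p$ and $p'$ disjoint (the two rewrites commute trivially), $p = p'$ (so $T = T'$), and an overlap where one of $p, p'$ lies strictly inside the other. The overlap case is the real work: one exhibits by direct computation finite LD-expansion sequences from both $T$ and $T'$ to a common, more fully distributed term. The number of overlap patterns is small---essentially two LD-instances sharing a common subterm, as in $T_1 \op (T_2 \op (T_3 \op T_4))$ with various relative placements---and each is dispatched by explicit applications of the LD law.

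The principal obstacle is that $\expLD$ is not terminating: one can keep distributing forever. Consequently, Newman's lemma cannot be invoked to pass from local to global confluence; the passage must be carried out by hand. The standard method is a double induction on the lengths of the two given multi-step $\expLD$-sequences out of $S$, using local confluence at each step to tile the peak $(S, T, T')$ with elementary confluence diamonds. This tiling argument is the technically delicate part of the proof, and is where the treatments in \cite{Deq, Lvb, Dgd} differ in bookkeeping; conceptually, however, it is the same diamond-pasting construction that underlies Church--Rosser-type theorems in nonterminating rewriting systems.
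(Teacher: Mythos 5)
Your reduction of the lemma to confluence of $\expLD$, and your local-confluence case analysis on the positions of the two rewrites, both match the paper's argument. The gap is in the passage from local to global confluence. You correctly note that $\expLD$ is non-terminating so Newman's lemma is unavailable, but the replacement you propose---a double induction on the lengths of the two expansion sequences, tiling the peak with local-confluence diamonds---is not a valid argument. Local confluence does \emph{not} imply confluence for non-terminating relations (there are standard four-element counterexamples), and the tiling fails precisely because the ``diamonds'' are not diamonds: closing a local peak $T \mathrel{\leftarrow} S \mathrel{\rightarrow} T'$ may require arbitrarily many expansion steps on each side, so the measure in your double induction need not decrease and the tiling need not terminate. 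Asserting that this is ``the standard method... underlying Church--Rosser-type theorems in nonterminating rewriting systems'' conflates it with the parallel-reduction / complete-development technique, which is a genuinely different (and stronger) device.

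That stronger device is exactly what the paper supplies and what your proposal is missing: the operator $\partial$ of~\eqref{E:Der}--\eqref{E:Otimes}, which sends $\TT$ to its ``full LD-expansion.'' One proves that $\partial\TT$ is an LD-expansion of $\TT$ \emph{and of every atomic LD-expansion of~$\TT$}, and that $\TT \expLD \TT'$ implies $\partial\TT \expLD \partial\TT'$. These two facts give a genuine diamond property for the one-big-step relation, whence $\partial^\pp\TT$ is an LD-expansion of every term reachable from $\TT$ in at most $\pp$ atomic steps, and any two LD-expansions of $\TT$ have $\partial^\pp\TT$ as a common expansion for $\pp$ large enough. Without constructing such a majorizing operator (or an equivalent parallel-rewrite relation with the diamond property), the global confluence step does not go through.
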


\begin{proof}[Idea of the proof]
By definition, two terms~$\TT, \TT'$ are LD-equivalent if, and only if, there exists a finite zigzag of LD-expansions and inverses of LD-expansions connecting~$\TT$ to~$\TT'$. The point is to show that there always exists a zigzag with only one expansion and one inverse of expansion. To this end, it suffices to prove that the relation~$\expLD$ is confluent, \ie, that any two LD-expansions~$\TT', \TT''$ of a term~$\TT$ admit a common LD-expansion. It is easy to check local confluence, namely confluence when $\TT'$ and $\TT''$ are obtained from~$\TT$ by applying the LD law at most once (``atomic'' LD-expansion). But a termination problem arises, because the relation~$\expLD$ is far from being noetherian (infinite sequences of LD-expansions exist) and Newman's standard diamond lemma~\cite{New} cannot be applied. To solve this, one considers, for every term~$\TT$, the term~$\partial\TT$ inductively defined by
\begin{equation}\label{E:Der}
\partial\TT:= 
\begin{cases}
\xx&\text{for $\TT = \xx$},\cr
\partial\TT_0 \otimes \partial\TT_1&\text{for $\TT = \TT_0 \op \TT_1$},
\end{cases}
\end{equation}
where $\otimes$ itself is inductively defined by
\begin{equation}\label{E:Otimes}
\SS \otimes \TT := 
\begin{cases}
\SS \op \TT&\text{for $\TT = \xx$},\cr
(\SS \otimes \TT_0) \op (\SS \otimes \TT_1)&\text{for $\TT = \TT_0 \op \TT_1$}.
\end{cases}
\end{equation}
One can show that $\partial\TT$ is an LD-expansion of~$\TT$ and of all atomic LD-expansions of~$\TT$, and that $\TT \expLD \TT'$ implies $\partial\TT \expLD \partial\TT'$. It is then easy to deduce that, for every~$\pp$, the term~$\partial^\pp\TT$ is an LD-expansion of all LD-expansions of~$\TT$ obtained using at most~$\pp$~atomic expansion steps. From there, any two LD-expansions of~$\TT$ admit as a common LD-expansion any term~$\partial^\pp\TT$ with~$\pp$ large enough.
\end{proof}

The second ingredient is the following specific property.

\begin{nota}[\bf powers]\label{N:Power}
For $\op$ a binary operation on~$\SS$ and $\aa$ in~$\SS$, the \emph{left} and \emph{right} powers of~$\aa$ are defined by $\aa_{[1]} := \aa^{[1]} := \aa$, and $\aa_{[\nn + 1]}:= \aa_{[\nn]} \op \aa$  and $\aa^{[\nn + 1]} := \aa \op \aa^{[\nn]}$.
\end{nota}

\begin{lemm}[\bf absorption property]\label{L:Absorp}
If $\SS$ is a left-shelf generated by an element~$\gg$, then, for every~$\aa$ in~$\SS$, we have $\aa \op \gg^{[\nn]} = \gg^{[\nn + 1]}$ for~$\nn$ large enough (depending on~$\aa$).
\end{lemm}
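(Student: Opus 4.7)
The plan is to argue by induction on the term-structure of $\aa$. Since $\SS$ is generated by~$\gg$, every $\aa$ in~$\SS$ is represented by some term in the single variable~$\gg$ under~$\op$, so it suffices to check that (i) the property holds for~$\gg$, and (ii) the class of elements satisfying it is closed under~$\op$. To make the induction go through smoothly I will establish the slightly stronger statement: for every~$\aa$, the set $P(\aa):=\{\nn\ge1 : \aa \op \gg^{[\nn]} = \gg^{[\nn+1]}\}$ is cofinite in~$\NNNN$. This strengthening is natural because the inductive step will require the property for two consecutive values of~$\nn$.

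The base case is immediate: by the very definition of right powers, $\gg \op \gg^{[\nn]} = \gg^{[\nn+1]}$ for every~$\nn\ge1$, so $P(\gg) = \NNNN_{\ge 1}$. For the inductive step, assume $P(\bb)$ and $P(\cc)$ are cofinite, and set $\aa = \bb \op \cc$. The key computation is an application of the left selfdistributivity law~$\LD$: we expand
\begin{equation*}
\bb \op (\cc \op \gg^{[\nn]}) = (\bb \op \cc) \op (\bb \op \gg^{[\nn]}).
\end{equation*}
Whenever $\nn$ belongs to $P(\cc) \cap P(\bb)$ and $\nn+1$ belongs to~$P(\bb)$, the left-hand side equals $\bb \op \gg^{[\nn+1]} = \gg^{[\nn+2]}$, while the right-hand side equals $(\bb \op \cc) \op \gg^{[\nn+1]}$. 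Hence $\nn+1 \in P(\bb \op \cc)$. Since the conditions $\nn \in P(\bb)$, $\nn \in P(\cc)$, and $\nn+1 \in P(\bb)$ each exclude only finitely many $\nn$, their conjunction also holds for cofinitely many~$\nn$, proving that $P(\aa)$ is cofinite.

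I expect no serious obstacle here: the heart of the argument is the single $\LD$-expansion above, and the bookkeeping with cofinite sets is routine. The only mild subtlety worth flagging is the need to strengthen the statement to cofiniteness (rather than mere nonemptiness of the witness set), since the inductive step genuinely uses that $\bb$ satisfies the property at two consecutive indices~$\nn$ and~$\nn+1$; without this, a naive induction would only yield a value of~$\nn$ for~$\bb$ and a value for~$\cc$ that need not be compatible. Once this is observed, the proof is a direct structural induction based on a single application of selfdistributivity.
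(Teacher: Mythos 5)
Your proof is correct and follows essentially the same route as the paper's: a structural induction on a term representing~$\aa$, whose inductive step is the single LD-expansion $\bb \op (\cc \op \gg^{[\nn]}) = (\bb \op \cc) \op (\bb \op \gg^{[\nn]})$ combined with the hypothesis for~$\bb$ at two consecutive indices. Your ``cofinite set'' bookkeeping is just a repackaging of the paper's choice $\nn \ge \max(\nn_0,\nn_1)+1$, so there is nothing substantive to add.
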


\begin{proof}
As in the case of the comparison property, it is sufficient to consider the case of the free left-shelf, namely to establish that, for every term~$\TT$ in~$\TERM\op\xx$, 
\begin{equation}\label{E:Absorp}
\TT \op \xx^{[\nn]} \eqLD \xx^{[\nn + 1]}
\end{equation}
holds for~$\nn$ large enough. We use induction on (the size of)~$\TT$. For~$\TT = \xx$, \eqref{E:Absorp} holds for every~$\nn \ge 1$, with $\eqLD$ being an equality. Otherwise, write $\TT = \TT_0 \op \TT_1$, and assume that (the counterpart of)~\eqref{E:Absorp} holds for~$\TT_\ii$ for every~$\nn \ge \nn_\ii$. For $\nn \ge \max(\nn_0, \nn_1) + 1$, we obtain
\begin{align*}
\xx^{[\nn + 1]} 
&\eqLD \TT_0 \op \xx^{[\nn]} 
&&\text{owing to $\nn \ge \nn_0$ and the IH for~$\TT_0$,}\\
&\eqLD \TT_0 \op (\TT_1 \op \xx^{[\nn-1]}) 
&&\text{owing to $\nn-1 \ge \nn_1$ and the IH for~$\TT_1$,}\\
&\eqLD (\TT_0 \op \TT_1) \op (\TT_0 \op \xx^{[\nn-1]}) 
&&\text{by LD,}\\
&\eqLD (\TT_0 \op \TT_1) \op \xx^{[\nn]} = \TT \op \xx^{[\nn]}
&&\text{owing to $\nn-1 \ge \nn_0$ and the IH for~$\TT_0$. \hfill $\square$}
\end{align*}
\let\qed=\relax
\end{proof}

Using Lemmas~\ref{L:Confl} and~\ref{L:Absorp}, it is now easy to deduce the comparison property.

\begin{proof}[Proof of Lemma~\ref{L:Comp2}]
(See Fig.~\ref{F:Comp}.) Let $\TT, \TT'$ belong to~$\TERM\op\xx$. By Lem\-ma~\ref{L:Absorp}, for $\nn$ large enough, we have
\begin{equation}
\TT \op \xx^{[\nn]} \eqLD \xx^{[\nn + 1]} \eqLD \TT' \op \xx^{[\nn]}.
\end{equation}
By Lemma~\ref{L:Confl}, we deduce that $\TT \op \xx^{[\nn]}$ and $\TT' \op \xx^{[\nn]}$ admit a common LD-expansion~$\TT''$ (which can be assumed to be~$\partial^\pp\xx^{[\nn + 1]}$ for some~$\pp$). Using an induction on the number of LD-expansion steps, it is easy to verify that, if $\TT$ is not a variable and $\TT'$ is an LD-expansion of~$\TT$, then there exists~$\rr$ such that the $\rr$th iterated left subterm~$\LS^{\rr}(\TT')$ of~$\TT'$ is an LD-expansion of the left subterm~$\LS(\TT)$ of~$\TT$. In the current case, the left subterm of~$\TT \op \xx^{[\nn]}$ is~$\TT$, and we deduce that there exist~$\rr$ satisfying $\TT \expLD \LS^\rr(\TT'')$, whence $\TT \eqLD \LS^\rr(\TT'')$. Similarly, there exist~$\rr'$ satisfying $\TT' \expLD \LS^{\rr'}(\TT'')$, whence $\TT' \eqLD \LS^{\rr'}(\TT'')$. 

Now three cases may occur. For $\rr = \rr'$, we find $\TT \eqLD \LS^{\rr}(\TT) \eqLD \TT'$, whence $\TT \eqLD \TT'$. For $\rr > \rr'$, the term~$\LS^{\rr}(\TT)$ is an iterated left subterm of~$\LS^{\rr'}(\TT)$, that is, we have $\LS^{\rr}(\TT) \divs \LS^{\rr'}(\TT)$, hence $\TT \divsLD \TT'$. Similarly, for~$\rr <\nobreak \rr'$, we obtain~$\TT \multsLD \TT'$ by a symmetric argument.
\end{proof}

\begin{figure}[htb]
\begin{picture}(100,63)(0,-3)
\put(15,42){\begin{picture}(0,0)(0,0)
\setlength\unitlength{0.8mm}\psset{unit=.8mm}
\psline[fillstyle=solid,fillcolor=PcolorAAA](10,18)(0,0)(20,0)(10,18)
\put(8,6){$\TT$}
\psline(10,18)(13,22)(16,18)
\psline(16,18)(14,15)(16,18)(18,15)(16,18)(18,15)(16,12)(18,15)(20,12)(18,9)(20,12)(22,9)(20,6)(22,9)(24,6)(22,3)(24,6)(26,3)(24,0)(26,3)(28,0)
\end{picture}}

\put(75,42){\begin{picture}(0,0)(0,0)
\setlength\unitlength{0.8mm}\psset{unit=.8mm}
\psline[fillstyle=solid,fillcolor=PcolorBBB](10,18)(0,0)(20,0)(10,18)
\put(8,6){$\TT'$}
\psline(10,18)(13,22)(16,18)
\psline(16,18)(14,15)(16,18)(18,15)(16,18)(18,15)(16,12)(18,15)(20,12)(18,9)(20,12)(22,9)(20,6)(22,9)(24,6)(22,3)(24,6)(26,3)(24,0)(26,3)(28,0)
\end{picture}}

\put(40,42){\begin{picture}(0,0)(0,0)
\setlength\unitlength{0.8mm}\psset{unit=.8mm}
\psline(12,18)(14,21)(16,18)
\psline(16,18)(14,15)(16,18)(18,15)(16,18)(18,15)(16,12)(18,15)(20,12)(18,9)(20,12)(22,9)(20,6)(22,9)(24,6)(22,3)(24,6)(26,3)(24,0)(26,3)(28,0)
\put(0,12){$\eqLD$}
\put(32,12){$\eqLD$}
\end{picture}}

\put(35,3){\begin{picture}(0,0)(0,3)
\psset{unit=1mm}
\psline(20,36)(0,0)(40,0)(20,36)
\put(17.5,29){$\TT''$}
\psline[arrowsize=5pt]{->}(5,40)(15,32)
\psline[arrowsize=5pt]{->}(35,40)(25,32)

\psline[fillstyle=solid,fillcolor=PcolorAAA](15,27)(0,0)(30,0)(15,27)
\WDot{15,27}\put(11,28){{$0^{\rr}$}}
\psbezier[arrowsize=5pt,linestyle=dashed]{->}(-12,41)(-12,25)(0,32)(10,24)

\psline[fillstyle=solid,fillcolor=PcolorBBB](10,18)(0,0)(20,0)(10,18)
\put(3,2){$\LS^{\rr'}(\TT'')$}
\WDot{10,18}\put(6,19){{$0^{\rr'}$}}
\psbezier[arrowsize=5pt,linestyle=dashed,border=2pt](48,41)(48,25)(35,37)(20,37)
\psbezier[arrowsize=5pt,linestyle=dashed,border=2pt]{->}(20,37)(5,37)(-5,25)(5,15)
\psline[style=thin](0,-1)(0,-2)(30,-2)(30,-1)
\put(9,-5.5){$\LS^{\rr}(\TT'')$}
\end{picture}}
\end{picture}
\caption{\sf\small Proof of the comparison property: the terms~$\TT$ and~$\TT'$ admit LD-expansions that both are iterated left subterms of some (large) term~$\TT''$: the latter coincide, or one is an iterated left subterm of the other.}
\label{F:Comp}
\end{figure}
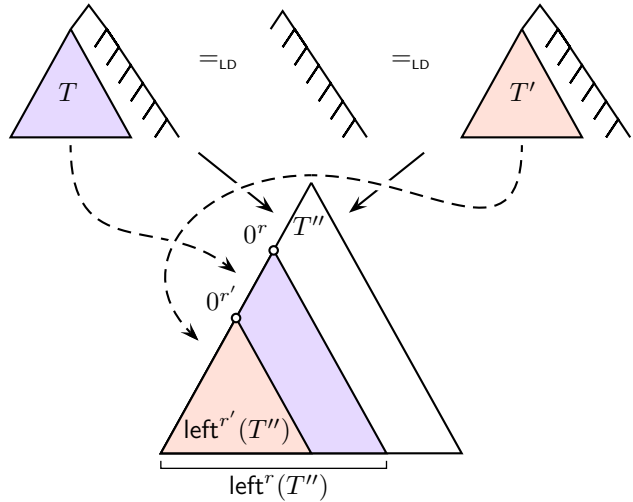

%%%%
\subsection{A conditional syntactic solution}\label{SS:CondSyntactic}

We are now ready to describe a syntactic solution of the word problem for~$\LD$. However, in a first step, the solution will remain conditional, as its correctness relies on an extra assumption that will be established in the next section only.

\begin{defi}[\bf acyclic]
A left-shelf~$\SS$ is called \emph{acyclic} if the relation~$\div$ on~$\SS$ has no cycle.
\end{defi}

\begin{lemm}[\bf exclusion property]\label{L:Excl1}
If there exists an acyclic left-shelf, the relations $\eqLD$ and~$\divsLD$ on~$\TERM\op\xx$ exclude one another.
\end{lemm}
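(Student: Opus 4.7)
The plan is to use an evaluation homomorphism from the free one-generator left-shelf into the given acyclic left-shelf, and then observe that an equivalence together with a strict division would force a cycle downstairs.

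First, let $\SS$ be an acyclic left-shelf and pick any element $\gg \in \SS$. Define the evaluation map $\pi : \TERM\op\xx \to \SS$ inductively by $\pi(\xx) := \gg$ and $\pi(\TT_0 \op \TT_1) := \pi(\TT_0) \op \pi(\TT_1)$. Since $\SS$ satisfies LD, the map $\pi$ is compatible with all instances of the LD law and hence factors through $\eqLD$: if $\TT \eqLD \TT'$, then $\pi(\TT) = \pi(\TT')$ in $\SS$.

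Next I check that $\pi$ transports the division relations correctly. If $\TT \divLD \TT'$, there exists $\TT_1$ with $\TT \op \TT_1 \eqLD \TT'$, whence $\pi(\TT) \op \pi(\TT_1) = \pi(\TT')$ in $\SS$, which means $\pi(\TT) \div \pi(\TT')$. By iterating, $\TT \divsLD \TT'$ implies $\pi(\TT) \divs \pi(\TT')$ in $\SS$.

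Now suppose for contradiction that there exist $\TT, \TT' \in \TERM\op\xx$ with both $\TT \eqLD \TT'$ and $\TT \divsLD \TT'$. Then on the one hand $\pi(\TT) = \pi(\TT')$, and on the other hand $\pi(\TT) \divs \pi(\TT')$ via some chain $\pi(\TT) = \aa_0 \div \aa_1 \div \cdots \div \aa_k = \pi(\TT')$ with $k \ge 1$. Substituting the equality, this is a nontrivial $\div$-cycle at $\pi(\TT)$, contradicting the acyclicity of~$\SS$.

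The essential ingredient is really just the existence of the homomorphism and the acyclicity hypothesis; no delicate syntactic argument is needed at this stage. The genuine obstacle the lemma hides is of course the construction of an acyclic left-shelf in the first place, which is postponed to the next section.
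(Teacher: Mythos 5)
Your proof is correct and follows the same route as the paper: evaluate at an element $\gg$ of the acyclic left-shelf, note that $\eqLD$ is collapsed to equality while $\divsLD$ projects to $\divs$, and conclude that their coexistence would produce a $\div$-cycle. The only cosmetic difference is that you phrase it as a proof by contradiction where the paper argues directly that $\TT\divsLD\TT'$ forces $\TT(\gg)\not=\TT'(\gg)$.
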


\begin{proof}
Assume that $\SS$ is an acyclic left-shelf, and~$\TT, \TT'$ are terms in~$\TERM\op\xx$ satisfying $\TT \divsLD \TT'$. Let~$\gg$ be an element of~$\SS$, and let~$\TT(\gg)$ and~$\TT'(\gg)$ be the evaluations of~$\TT$ and~$\TT'$ in~$\SS$ when $\xx$ is given the value~$\gg$. By definition, $\TT \divsLD \TT'$ implies $\TT(\gg) \divs \TT'(\gg)$ in~$\SS$, whence $\TT(\gg) \not= \TT'(\gg)$, since $\div$ has no cycle in~$\SS$. As $\TT \eqLD \TT'$ would imply $\TT(\gg) = \TT'(\gg)$, we deduce that $\TT \eqLD \TT'$ is impossible.
\end{proof}

\begin{prop}[\bf conditional word problem]\cite{Deq, Lvb}\label{P:CondSol}
If there exists an acyclic left-shelf, the word problem of~$\LD$ is decidable in the case of one variable.
\end{prop}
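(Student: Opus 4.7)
The plan is to combine the comparison property (Lemma~\ref{L:Comp2}), which is unconditional, with the exclusion property (Lemma~\ref{L:Excl1}), where the acyclicity hypothesis enters, to obtain a trichotomy: for every pair $\TT, \TT' \in \TERM\op\xx$, exactly one of $\TT \eqLD \TT'$, $\TT \divsLD \TT'$, $\TT' \divsLD \TT$ holds. Decidability of $\eqLD$ then drops out by observing that each of the three relations is recursively enumerable and running the three corresponding semi-decision procedures in parallel.

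First I would record the trichotomy. Lemma~\ref{L:Comp2} already yields that at least one of the three relations always holds. Fixing a generator $\gg$ of an acyclic left-shelf $\SS$ and evaluating at $\gg$, the argument of Lemma~\ref{L:Excl1} rules out every pair of simultaneous alternatives: $\TT \eqLD \TT'$ together with $\TT \divsLD \TT'$ would give both $\TT(\gg) = \TT'(\gg)$ and $\TT(\gg) \divs \TT'(\gg)$, hence the cycle $\TT(\gg) \divs \TT(\gg)$ in $\SS$, contradicting acyclicity; and the same reasoning applied to the composition $\TT \divsLD \TT' \divsLD \TT$ rules out simultaneity of the two division relations. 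Hence exactly one alternative holds.

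Next I would verify that each of the three relations is recursively enumerable on $\TERM\op\xx$. For $\eqLD$, Lemma~\ref{L:Confl} reduces equivalence to existence of a common LD-expansion; dovetailing an enumeration of finite sequences of atomic LD-expansion steps starting from $\TT$ and from $\TT'$, and halting as soon as syntactic equality is detected, gives a semi-decision procedure. For $\divLD$, the definition reads $\TT \divLD \TT'$ iff $\TT \op \TT_1 \eqLD \TT'$ for some $\TT_1$; enumerating $\TT_1$ and running the $\eqLD$ semi-decision on $\TT \op \TT_1$ and $\TT'$ yields a semi-decision for $\divLD$, and its transitive closure $\divsLD$ becomes semi-decidable by enumerating finite chains of $\divLD$-steps.

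On input $(\TT, \TT')$ the algorithm dovetails the three semi-decision procedures for $\TT \eqLD \TT'$, $\TT \divsLD \TT'$, and $\TT' \divsLD \TT$; by the trichotomy exactly one halts, and identifying which one halted decides whether $\TT \eqLD \TT'$ holds. The substantive work has already been packed into Lemmas~\ref{L:Comp2}, \ref{L:Confl} and~\ref{L:Excl1}, so what remains is essentially bookkeeping. The one point that deserves care, and which I expect to be the main delicate step rather than a genuine obstacle, is the semi-decidability of $\divsLD$, since it is a transitive closure of a relation whose very definition already invokes the semi-decidable $\eqLD$: one must check that the nested enumerations can be arranged coherently, which follows from the closure of semi-decidable relations under composition and finite iteration.
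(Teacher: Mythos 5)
Your proposal is correct and follows essentially the same route as the paper: combine the comparison property (Lemma~\ref{L:Comp2}) with the exclusion property (Lemma~\ref{L:Excl1}) to see that, under the acyclicity hypothesis, $\divsLD\cup\multsLD$ is exactly the complement of~$\eqLD$, and conclude by semi-decidability of both sides. The only (harmless) differences are cosmetic: you semi-decide $\eqLD$ via confluence and common LD-expansions rather than by enumerating zigzags, and you semi-decide $\divsLD$ directly from its definition via closure of recursively enumerable relations under composition and finite iteration, where the paper instead tests whether $\TT$ is a proper iterated left subterm of some term $\eqLD$-equivalent to~$\TT'$.
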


\begin{proof}
The relation~$\eqLD$ on~$\TERM\op\xx$ is semi-decidable, meaning that there exists an algorithm that, starting with any two terms~$\TT, \TT'$, returns~$\True$ if $\TT \eqLD\nobreak \TT'$ holds, and runs forever if $\TT \eqLD \TT'$ fails: just start with~$\TT$, ``stupidly'' enumerate all terms~$\TT''$ that are $\LD$-equivalent to~$\TT'$ by repeatedly applying~$\LD$ in either direction at any position, and test $\TT =? \TT''$. Similarly, $\divsLD$ is semi-decidable: starting with~$\TT, \TT'$, enumerate all terms~$\TT''$ that are $\LD$-equivalent to~$\TT'$ and test $\TT \divs? \TT''$, \ie, test whether $\TT$ is a proper iterated left subterm of~$\TT''$. Thus, both $\eqLD$ and $\divsLD \cup \multsLD$ are semi-decidable relations on~$\TERM\op\xx$. By the comparison property (Lemma~\ref{L:Comp2}), their union is all of~$\TERM\op\xx$ and, if there exists an acyclic left-shelf, they are disjoint by the exclusion property (Lemma~\ref{L:Excl1}). Thus, in this case, $\divsLD {\cup} \multsLD$ coincides with~$\not\eqLD$. Then $\eqLD$ and its complement are semi-decidable, hence they are decidable.
\end{proof}

The approach leads to a (certainly very inefficient) algorithm. Fix an exhaustive enumeration~$(\ss_\nn, \ss'_\nn)_{\nn \ge 0}$ of the pairs of finite sequences of positions in a binary tree---that is, sequences of binary addresses, see Section~\ref{SS:Thompson} below---and let $\EXPAND(\TT, \ss)$ be the result of applying~LD in the expanding direction starting from~$\TT$ and according to the sequence of positions~$\ss$.

\begin{algo}[\bf syntactic solution, case of one variable]\label{A:Syntact}

\rm\hfill\par

\noindent {\bf Input}: Two terms $\TT$, $\TT'$ in~$\TERM\op\xx$

\noindent {\bf Output}: $\True$ if $\TT$ and $\TT'$ are $\LD$-equivalent, $\False$ otherwise

\lab{1} $\Found := \False$

\lab{2} $\nn := 0$

\lab{3} {\bf while not} $\Found$ {\bf do}

\lab{4} \quad $\TT_1:= \EXPAND(\TT, \ss_\nn)$

\lab{5} \quad $\TT'_1:= \EXPAND(\TT', \ss'_\nn)$

\lab{6} \quad {\bf if} $\TT_1 = \TT'_1$ {\bf then}

\lab{7} \quad\quad {\bf return} $\True$

\lab{8} \quad\quad $\Found := \True$

\lab{9} \quad {\bf if} $\TT_1 \divs \TT'_1$ {\bf or} $\TT \mults \TT'_1$ {\bf then}

\lab{10} \quad\quad {\bf return} $\False$

\lab{11} \quad\quad $\Found := \True$

\lab{12} \quad $\nn:= \nn + 1$
\end{algo}

Termination of the algorithm follows from the comparison property: for all~$\TT, \TT'$, there must exist a pair of sequences~$(\ss, \ss')$ such that expanding~$\TT$ according to~$\ss$ and~$\TT'$ according to~$\ss'$ yields a pair of terms~$(\TT_1, \TT'_1)$ consisting of equal or comparable terms; correctness follows from the exclusion property, which guarantees that $\TT_1 \divs \TT'_1$ implies $\TT_1 \not\eqLD \TT'_1$, hence $\TT \not\eqLD \TT'$. So, at this point, we may state:

\begin{prop}[\bf conditional syntactic solution, case of one variable]
If there exists an acyclic left-shelf, Algorithm~\ref{A:Syntact} is correct, and the word problem of selfdistributivity is decidable in the case of one variable.
\end{prop}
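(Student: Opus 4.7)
The plan is to establish termination and correctness of Algorithm~\ref{A:Syntact} separately, from which decidability of the one-variable word problem is immediate. All the conceptual work has already been done: termination rests on the comparison property (Lemma~\ref{L:Comp2}), correctness on the exclusion property (Lemma~\ref{L:Excl1}), and the algorithm itself merely realises the abstract dichotomy by an exhaustive enumeration.

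For correctness I would argue as follows. If the loop returns~\True{} at line~7, the terms $\TT_1$ and $\TT'_1$ have been obtained from $\TT$ and~$\TT'$ by applications of~$\LD$ in the expanding direction, so that $\TT \eqLD \TT_1 = \TT'_1 \eqLD \TT'$, and the answer is sound. If instead the loop returns~\False{} at line~10, then one of the divisibility tests at line~9 has succeeded, which in either reading means that some LD-expansion of one of the inputs has been detected as an iterated left subterm of (an LD-expansion of) the other. Unfolding the definition, this yields $\TT \divsLD \TT'$ or $\TT' \divsLD \TT$, and then Lemma~\ref{L:Excl1} excludes $\TT \eqLD \TT'$, so \False{} is again the correct answer.

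For termination I would fix a pair~$(\TT, \TT')$ and appeal to Lemma~\ref{L:Comp2}: exactly one of $\TT \eqLD \TT'$, $\TT \divsLD \TT'$, $\TT' \divsLD \TT$ holds. In the first case, Lemma~\ref{L:Confl} furnishes a common LD-expansion~$\TT''$ of~$\TT$ and $\TT'$, hence finite sequences of positions $(\ss, \ss')$ with $\EXPAND(\TT, \ss) = \TT'' = \EXPAND(\TT', \ss')$, and the test at line~6 succeeds as soon as the enumeration reaches the corresponding index. In the two asymmetric cases, inspection of the proof of Lemma~\ref{L:Comp2}, in which a common expansion $\TT''$ is built such that $\TT$ and $\TT'$ appear (up to $\eqLD$) as iterated left subterms of expansions, yields a pair $(\ss, \ss')$ whose associated expansions make the test at line~9 succeed. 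Since the outer loop enumerates all pairs $(\ss_\nn, \ss'_\nn)$, a witnessing index is reached after finitely many iterations, and the loop exits.

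Decidability of the one-variable word problem for~$\LD$ follows at once: Algorithm~\ref{A:Syntact} halts on every input and returns~\True{} exactly when $\TT \eqLD \TT'$, hence is a decision procedure for~$\eqLD$ on~$\TERM\op\xx$. The main obstacle in the whole line of reasoning is not in the present proof but upstream, in the confluence, absorption and comparison lemmas (\ref{L:Confl}, \ref{L:Absorp}, \ref{L:Comp2}); once those are granted, this proposition is essentially a formal wrapping of the informal argument given in the paragraph that immediately precedes the algorithm.
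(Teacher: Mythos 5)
Your proposal is correct and follows the same route as the paper: termination from the comparison property (a witnessing pair of expansion sequences must exist, produced via confluence and the subterm analysis in the proof of Lemma~\ref{L:Comp2}), and correctness of a \False{} answer from the exclusion property supplied by the acyclic left-shelf. The only nitpick is that Lemma~\ref{L:Comp2} asserts \emph{at least} one of the three relations holds, not exactly one (exclusivity is only available once acyclicity is assumed), but your argument never actually needs exclusivity for termination, so nothing is affected.
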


\begin{exam}[\bf syntactic solution]\label{X:Syntactic}
Consider $\TT:= (\xx \op \xx) \op (\xx \op (\xx \op \xx))$ and $\TT' := \xx \op ((\xx \op \xx) \op (\xx \op \xx))$. Expanding~$\TT$ at~$(1, \emptyset)$ and~$\TT'$ at~$(\emptyset, 1, 0)$ (see Definition~\ref{D:Blueprint} for the formalism), we obtain the common LD-expansion
$$((\xx \op \xx) \op (\xx \op \xx)) \op ((\xx \op \xx) \op (\xx \op \xx)),$$
and we conclude that $\TT$ and~$\TT'$ are $\LD$-equivalent. Note that, here, the conclusion is certain without any hypothesis (no need of an acyclic left-shelf), contrary to the case of expansions that are proper iterated left subterms of one another.
\end{exam}

%%%%
\subsection{A syntactic solution}\label{SS:Syntactic}

When the above method was first described (1989), no example of an acyclic left-shelf was known and its existence was a conjecture. Shortly after, R.\,Laver established in~\cite{Lvb}: 

\begin{prop}[\bf acyclic I]\label{P:Acyclic1}
If $\jj$ is a nontrivial elementary embedding of~$\VV_\lambda$ into itself, the left-shelf~$\Iter(\jj)$ is acyclic.
\end{prop}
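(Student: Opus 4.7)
The plan is to equip $\Iter(\jj)$ with a well-founded ordinal-valued invariant whose strict monotonicity along the division relation~$\div$ precludes any cycle. The essential set-theoretic input is the identity $\mathrm{crit}(\ii\op\kk) = \ii(\mathrm{crit}(\kk))$, valid for all nontrivial $\ii,\kk\in\mathcal{E}_\lambda$, together with the elementarity of the embeddings involved.

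\smallskip

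\emph{Step 1 (critical-point ladder).} Set $\kappa_0:=\mathrm{crit}(\jj)$ and $\kappa_{n+1}:=\jj(\kappa_n)$; this sequence is strictly increasing and cofinal in~$\lambda$. By structural induction on the $\op$-construction of $a\in\Iter(\jj)$ I would prove that $\mathrm{crit}(a)$ belongs to $\{\kappa_n:n<\omega\}$. The base case $a=\jj$ is immediate; in the inductive step $a=b\op c$, one has $\mathrm{crit}(a)=b(\mathrm{crit}(c))=b(\kappa_m)$ with $\kappa_m=\mathrm{crit}(c)$ given by the induction hypothesis, and this value is again some $\kappa_{m'}$: either $b$ fixes $\kappa_m$ (when $m$ lies below the index of $\mathrm{crit}(b)$), or $b(\kappa_m)$ remains in $(\kappa_n)$ by elementarity of $b$ applied to the first-order definable sequence $n\mapsto\kappa_n$. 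Write $\nu(a)$ for the resulting index.

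\smallskip

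\emph{Step 2 (monotone invariant, main step).} Consider a hypothetical cycle $a_0\div a_1\div\cdots\div a_N=a_0$ with $N\ge1$, writing $a_{k+1}=a_k\op x_k$ and $x_k\in\Iter(\jj)$. The naive hope that $\nu$ is $\div$-monotone \emph{fails}: for $a=\jj\op\jj$ and $b=a\op\jj$ one has $a\div b$ with $\nu(a)=1$ but $\nu(b)=0$, since $\mathrm{crit}((\jj\op\jj)\op\jj)=(\jj\op\jj)(\mathrm{crit}(\jj))=\kappa_0$. Therefore I would attach to each $a$ a finer invariant, for instance the normal measure $U_a:=\{X\subseteq\mathrm{crit}(a)\,:\,\mathrm{crit}(a)\in a(X)\}$, or equivalently the action of $a$ on the whole sequence $(\kappa_n)$. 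Such invariants live in a well-founded order (Mitchell's order, or a direct Rudin--Keisler-style comparison). Using $\mathrm{crit}(\ii\op\kk)=\ii(\mathrm{crit}(\kk))$ and the injectivity of elementary embeddings, one checks that $a\div b$ in $\Iter(\jj)$ forces $U_a$ to sit strictly below~$U_b$ in this order. Applying this strict monotonicity along the supposed cycle yields an infinite descent in a well-founded order, the desired contradiction.

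\smallskip

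\emph{Main obstacle.} All the real content is in Step 2, namely identifying an invariant that is genuinely $\div$-monotone and verifying the monotonicity. A single-ordinal invariant such as $\mathrm{crit}$ or $\nu$ provably cannot work, so one must encode the whole action of $a$ on the critical sequence (or the normal measure derived from $a$) and exploit deeper facts about elementary embeddings: elementarity applied to first-order definable statements about $(\kappa_n)$, injectivity, and the well-foundedness of Mitchell's order. This is where Laver's set-theoretic argument in~\cite{Lvb} really does its work; granted the monotone invariant, acyclicity of $\div$ on $\Iter(\jj)$ is a one-line consequence.
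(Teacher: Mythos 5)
The survey does not prove this statement: Prop.~\ref{P:Acyclic1} is Laver's theorem and is only quoted from~\cite{Lvb}, so your argument has to stand on its own. As written, it does not: both of its steps fail.

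Step~1 is false. It is not true that every element of~$\Iter(\jj)$ has critical point in the critical sequence $\{\kappa_n\}$ of~$\jj$. The set of critical points of members of~$\Iter(\jj)$ is vastly larger: this is exactly the content of Dougherty's theorem~\cite{Dou}, cited in this survey, which shows that the number of critical points of members of~$\Iter(\jj)$ lying below~$\kappa_n$ grows far faster than~$n$ (faster than any primitive recursive function); if all critical points were of the form~$\kappa_m$, that number would be exactly~$n$, and both the Laver--Steel finiteness theorem and Dougherty's bounds would be vacuous. Your justification also breaks down at the decisive point: the sequence $n\mapsto\kappa_n$ is defined from~$\jj$, and $\jj$ is \emph{not} an element of~$\VV_\lambda$ (it is a cofinal subset of $\VV_\lambda\times\VV_\lambda$), so it is not a legitimate parameter in a first-order statement to which the elementarity of an embedding $b:\VV_\lambda\to\VV_\lambda$ can be applied. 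Elementarity gives you $\mathrm{crit}(b\op c)=b(\mathrm{crit}(c))$ and $b(\xi)\ge\xi$, but not stability of the critical sequence under all of~$\Iter(\jj)$.

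Step~2, which you correctly identify as carrying all the weight, contains no argument. The observation that $\mathrm{crit}$ (or any single ordinal invariant) cannot be $\div$-monotone is right, but the proposed replacement --- that $a\div b$ forces the derived normal measure $U_a$ strictly below $U_b$ in the Mitchell order --- is asserted with ``one checks'' and nothing more. It is not even well posed as stated, since $U_a$ and $U_b$ concentrate on different cardinals in general, and I see no reason to believe any version of it; you then concede that ``this is where Laver's set-theoretic argument really does its work,'' which is an admission that the core of the theorem is missing. For the record, Laver's proof involves no measures and no Mitchell order: it exploits the identity $\mathrm{crit}(a\op b)=a(\mathrm{crit}(b))$ together with a delicate induction over the restrictions $a\restriction\VV_\gamma$ for $\gamma$ ranging over the (well-ordered, and only locally finite) set of critical points of members of~$\Iter(\jj)$, the well-foundedness coming from the ordinals themselves. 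So your proposal assembles some of the correct raw material but proves neither of its two steps, and the first is simply wrong.
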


This resulted in the paradoxical situation of a finitistic problem (the word problem of~$\LD$) whose only known solution appeals to an unprovable axiom: 

\begin{coro}\label{C:Correct1}
If there exists a Laver cardinal, Algorithm~\ref{A:Syntact} is correct, and the word problem of selfdistributivity is decidable in the case of one variable.
\end{coro}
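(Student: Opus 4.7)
The plan is a direct chaining of the results already proved in the excerpt: this corollary has no new mathematical content beyond combining earlier ingredients. First, I would unfold the hypothesis. By the definition of a Laver cardinal given in Example~\ref{X:Emb}, saying that a Laver cardinal exists means exactly that there is some ordinal~$\lambda$ for which the family~$\mathcal{E}_\lambda$ of elementary embeddings of~$\VV_\lambda$ into itself contains at least one element~$\jj$ distinct from the identity. Such a~$\jj$ is by definition a nontrivial elementary embedding, so the hypothesis of Proposition~\ref{P:Acyclic1} is met.

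Next, I would apply Proposition~\ref{P:Acyclic1} to this~$\jj$ to conclude that the left-shelf~$\Iter(\jj)$ is acyclic, that is, the division relation~$\div$ on~$\Iter(\jj)$ has no cycle. In particular, an acyclic left-shelf exists. This in turn activates the conclusion of Proposition~\ref{P:CondSol}: under the assumption that there exists an acyclic left-shelf, Algorithm~\ref{A:Syntact} is correct and the word problem of selfdistributivity in one variable is decidable. Assembling the two implications yields the corollary.

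There is no real obstacle here, since correctness of Algorithm~\ref{A:Syntact} was already established \emph{conditionally} on the existence of an acyclic left-shelf, and the acyclicity of~$\Iter(\jj)$ was asserted (with its nontrivial set-theoretic proof, which we take as a black box) in Proposition~\ref{P:Acyclic1}. The only thing worth flagging in the write-up is the observation that the surrounding text itself highlights: the argument is paradoxical, in that a purely finitistic decision problem is here being settled by means of the large cardinal hypothesis of Example~\ref{X:Emb}, which is not provable in~ZFC. This remark belongs to the discussion rather than to the proof, which is a two-line deduction.
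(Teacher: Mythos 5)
Your proposal is correct and is exactly the intended argument: the paper treats this corollary as an immediate consequence of chaining Proposition~\ref{P:Acyclic1} (a nontrivial elementary embedding yields an acyclic left-shelf $\Iter(\jj)$) with Proposition~\ref{P:CondSol} (existence of an acyclic left-shelf makes Algorithm~\ref{A:Syntact} correct). No further comment is needed.
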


The previous puzzling situation was resolved by the construction, without any set theoretical assumption, of another acyclic left-shelf~\cite{Dez, Dfb}:

\begin{prop}[\bf acyclic II]\label{P:Acyclic2}
The braid shelf of Example~\ref{X:Braids} is acyclic.
\end{prop}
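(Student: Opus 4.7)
The plan is to reduce the acyclicity to the nontriviality of \emph{$\sigma_1$-positive} braids --- braids admitting a word representation in $\sig1^{\pm1}, \sig2^{\pm1}, \ldots$ in which $\sig1$ occurs at least once but $\sig1\inv$ does not. This nontriviality property (equivalent to the existence of the Dehornoy order on~$\Bi$) will be invoked as a black box; once granted, the rest of the argument is essentially bookkeeping.

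First I would switch the operation of Example~\ref{X:Braids} to its left-shelf counterpart $\aa \op \bb := \bb \opR \aa = \sh(\aa)\inv \, \sig1 \, \sh(\bb) \, \aa$, which is left-distributive by the computation symmetric to the RD-verification in Example~\ref{X:Braids}. The equation $\aa \op \xx = \bb$ then rearranges to
$$\bb \, \aa\inv \;=\; \sh(\aa)\inv \, \sig1 \, \sh(\xx).$$
Since $\sh$ sends $\Bi$ into the subgroup generated by $\sig2, \sig3, \ldots$, the right-hand side is a word in the $\sig\ii^{\pm1}$ with $\ii \ge 1$ that contains at least one $\sig1$ and no $\sig1\inv$. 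Thus $\bb \, \aa\inv$ admits a $\sigma_1$-positive expression whenever $\aa \div \bb$ holds.

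Since the concatenation of two $\sigma_1$-positive words is again $\sigma_1$-positive, for every $\div$-chain $\aa_0 \div \aa_1 \div \cdots \div \aa_\nn$ in the braid shelf the telescoping product
$$\aa_\nn \, \aa_0\inv \;=\; (\aa_\nn \aa_{\nn-1}\inv)(\aa_{\nn-1} \aa_{\nn-2}\inv) \cdots (\aa_1 \aa_0\inv)$$
is itself $\sigma_1$-positive. A putative cycle $\aa_\nn = \aa_0$ with $\nn \ge 1$ would then force the identity of~$\Bi$ to admit a $\sigma_1$-positive expression, contradicting the quoted theorem. So no cycle exists, and the braid shelf is acyclic.

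The genuine obstacle is of course the quoted nontriviality property itself, which is the true mathematical content of the proposition. In contrast to the proof of Proposition~\ref{P:Acyclic1}, it admits a purely combinatorial derivation---for instance via a handle-reduction algorithm---so that the resulting acyclicity proof, and hence the solution of the word problem deduced from it, is free of any set-theoretical assumption, which is precisely why Proposition~\ref{P:Acyclic2} resolves the ``puzzling situation'' alluded to just above.
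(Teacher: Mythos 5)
Your proposal is correct and follows essentially the same route as the paper: both reduce acyclicity to the statement that a $\sig1$-positive braid is never trivial, by observing that each quotient $\bb\aa\inv$ (resp.\ $\aa\inv\bb$ in the paper's left-handed normalization $\aa \op \bb = \aa\,\sh(\bb)\,\sig1\,\sh(\aa)\inv$) arising from a division $\aa \div \bb$ is $\sig1$-positive, and that $\sig1$-positivity is preserved under the telescoping concatenation along a chain. The only difference is that the paper goes one step further and sketches a proof of the nontriviality of $\sig1$-positive braids itself (Larue's argument via the Artin representation of~$\Bi$ in~$\Aut(\Fi)$), whereas you invoke it as a black box; since that property is the genuine mathematical content here, a complete write-up should at least prove or precisely cite it, e.g.\ via \cite{Lra} or \cite{Dfb}, as you indicate.
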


\begin{proof}[Idea of the proof]
As we are considering left selfdistributivity here, the relevant version is the operation~$\op$ defined on the braid group~$\Bi$ by
\begin{equation}\label{E:BraidLD}
\aa \op \bb:= \aa \, \sh(\bb) \, \sig1 \, \sh(\aa)\inv.
\end{equation}
Proving that $(\Bi, \op)$ is acyclic means that no equality of the form 
\begin{equation}\label{E:Acyclic1}
\aa = (\pdots ((\aa \op \bb_1) \op \bb_2) \pdots ) \op \bb_\nn
\end{equation}
with $\nn \ge 1$ is possible in~$\Bi$. According to the definition of the operation~$\op$, the right term in~\eqref{E:Acyclic1} expands into an expression of the form
\begin{equation}\label{E:Acyclic2}
\aa \cdot \sh(\cc_0) \sig1 \sh(\cc_1) \sig1 \pdots \sig1 \sh(\cc_\nn),
\end{equation}
and, therefore, for excluding \eqref{E:Acyclic1}, it suffices to prove that a braid of the form
\begin{equation}\label{E:Sig1Pos}
\sh(\cc_0) \sig1 \sh(\cc_1) \sig1 \pdots \sig1 \sh(\cc_\nn)
\end{equation}
is never trivial (equal to~$1$). It is natural to call the braids as in~\eqref{E:Sig1Pos} \emph{$\sig1$-positive}, since they admit a decomposition, in which there is at least one letter~$\sig1$ and no letter~$\siginv1$. So, the problem is to show that a $\sig1$-positive braid is never trivial. 

Several arguments exist, see in particular~\cite{Dfb}, the simplest being the one, due to D.\,Larue~\cite{Lra}, which appeals to the Artin representation of~$\Bi$ in~$\Aut(\Fi)$, where $\Fi$ denotes a free group based on an infinite family $\{\xx_\ii \mid \ii \ge 1\}$, identified with the family of all freely-reduced words on~$\{\xx_\ii^{\pm1} \mid \ii \ge 1\}$. Artin's representation is defined by the rules
\begin{equation*}
\rho(\sig\ii)(\xx_\ii) := \xx_\ii \xx_{\ii + 1} \xx_\ii\inv, \quad \rho(\sig\ii)(\xx_{\ii + 1}) := \xx_\ii, \quad \rho(\sig\ii)(\xx_\kk) := \xx_\kk \text{ for $\kk \not= \ii, \ii + 1$},
\end{equation*}
and simple arguments about free reduction show that, if $\cc$ is a $\sig1$-positive braid, then $\rho(\cc)$ maps~$\xx_1$ to a reduced word that finishes with the letter~$\xx_1\inv$ and, therefore, $\cc$ cannot be trivial, since $\rho(1)$ maps~$\xx_1$ to~$\xx_1$, which does not finish with~$\xx_1\inv$. 
\end{proof}

Applying Prop.~\ref{P:CondSol}, we remove the exotic assumption in  Corollary~\ref{C:Correct1}:

\begin{coro}
Algorithm~\ref{A:Syntact} is correct, and the word problem of selfdistributivity is decidable in the case of one variable.
\end{coro}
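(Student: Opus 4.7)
The plan is simply to conjoin the two preceding results: Proposition~\ref{P:CondSol} reduces the decidability of the word problem of~$\LD$ in one variable to the existence of an acyclic left-shelf, and Proposition~\ref{P:Acyclic2} supplies such a left-shelf within ZFC, namely the braid shelf $(\Bi, \op)$ of Example~\ref{X:Braids}. So the proof has only two lines: first invoke Proposition~\ref{P:Acyclic2} to produce an acyclic left-shelf with no set-theoretic assumption, then feed it into the hypothesis of Proposition~\ref{P:CondSol} to obtain both the correctness of Algorithm~\ref{A:Syntact} and the decidability of the word problem.

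It is worth recalling, for the reader, exactly why the acyclicity hypothesis is what turns the algorithm on. The comparison property (Lemma~\ref{L:Comp2}) guarantees that every pair~$(\TT, \TT')$ in~$\TERM\op\xx$ falls into at least one of the three classes $\TT \eqLD \TT'$, $\TT \divsLD \TT'$, or $\TT' \divsLD \TT$, and each of these relations is semi-decidable by naive enumeration of LD-expansions and left-subterm extraction. The exclusion property (Lemma~\ref{L:Excl1})---which is precisely the consequence of having any acyclic left-shelf available---then says that $\eqLD$ and $\divsLD \cup \multsLD$ are disjoint. Complementary semi-decidable relations are decidable, and Algorithm~\ref{A:Syntact} is the explicit implementation: it scans pairs of expansion schedules~$(\ss_\nn, \ss'_\nn)$ until either the two expansions coincide (return~$\True$) or one becomes a proper iterated left subterm of the other (return~$\False$); termination is underwritten by Lemma~\ref{L:Comp2}, correctness by Lemma~\ref{L:Excl1} applied to the acyclic witness furnished by Proposition~\ref{P:Acyclic2}.

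There is no real obstacle remaining at this stage, because the substantive work has been placed into the two propositions invoked---in particular, into the proof of Proposition~\ref{P:Acyclic2}, whose core is the nontriviality of $\sig1$-positive braids $\sh(\cc_0) \sig1 \sh(\cc_1) \sig1 \pdots \sig1 \sh(\cc_\nn)$, itself a finitary fact witnessed by Larue's analysis of the Artin representation of~$\Bi$ in~$\Aut(\Fi)$. The Corollary differs from the conditional Corollary~\ref{C:Correct1} only in the choice of acyclic shelf: Corollary~\ref{C:Correct1} used $\Iter(\jj)$ from Proposition~\ref{P:Acyclic1}, whose existence requires a Laver cardinal, whereas Proposition~\ref{P:Acyclic2} provides a ZFC-constructible alternative. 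Removing the set-theoretic hypothesis is therefore not a modification of the algorithm or of its analysis, but the mere substitution of one acyclic witness for another inside the argument of Proposition~\ref{P:CondSol}.
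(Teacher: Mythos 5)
Your proposal is correct and matches the paper's own argument exactly: the corollary is obtained by feeding the acyclic left-shelf of Proposition~\ref{P:Acyclic2} (the braid shelf, with the left-version operation of~\eqref{E:BraidLD}) into the hypothesis of Proposition~\ref{P:CondSol}, thereby removing the Laver-cardinal assumption of Corollary~\ref{C:Correct1}. Your additional recap of why acyclicity drives the exclusion property and hence the correctness of Algorithm~\ref{A:Syntact} is accurate but already contained in the proofs of those earlier results.
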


%%%%
\subsection{A semantic solution}\label{SS:Semantic}

However, we can obtain more, namely a new, more efficient algorithm for the word problem of~LD. The starting point is the following criterion, whose proof is essentially the same as the one of Prop.~\ref{P:CondSol}:

\begin{lemm}[\bf freeness criterion]\label{L:Freeness}
If $\SS$ is an acyclic monogenerated left-shelf, then $\SS$ is free.
\end{lemm}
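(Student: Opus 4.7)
The plan is to show that the natural evaluation map
\[
\phi : \TERM\op\xx{/}{\eqLD} \longrightarrow \SS,\qquad [\TT] \longmapsto \TT(\gg),
\]
where $\gg$ is a generator of $\SS$ and $\TT(\gg)$ denotes the evaluation of $\TT$ in $\SS$ obtained by setting $\xx := \gg$, is an isomorphism of left-shelves; this is exactly what it means for $\SS$ to be free on $\{\gg\}$. That $\phi$ is well defined and a homomorphism is immediate, since $\SS$ obeys the law~$\LD$, and $\phi$ is surjective because its image contains $\gg$ and is closed under~$\op$, while $\SS$ is by hypothesis generated by $\gg$.

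The point is to prove injectivity. Assume $\phi([\TT]) = \phi([\TT'])$, that is, $\TT(\gg) = \TT'(\gg)$ in~$\SS$. By the comparison property (Lemma~\ref{L:Comp2}), at least one of $\TT \eqLD \TT'$, $\TT \divsLD \TT'$, or $\TT' \divsLD \TT$ holds. Suppose $\TT \divsLD \TT'$ and argue as in the proof of the exclusion property (Lemma~\ref{L:Excl1}): since $\divLD$ evaluates to~$\div$ in any left-shelf, we obtain $\TT(\gg) \divs \TT'(\gg)$ in~$\SS$; combined with $\TT(\gg) = \TT'(\gg)$, this gives a cycle $\TT(\gg) \divs \TT(\gg)$ of~$\div$ in~$\SS$, contradicting acyclicity. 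The case $\TT' \divsLD \TT$ is excluded by the symmetric argument. Hence only $\TT \eqLD \TT'$ remains, so $\phi$ is injective, and $\SS$ is free on $\{\gg\}$.

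There is no real obstacle here: the lemma is essentially a repackaging of the exclusion property, with the comparison property bridging the gap between $\not\eqLD$ and $\divsLD \cup \multsLD$. The only point to be watchful about is the reading of \emph{acyclic}: what we need is that $\aa \divs \aa$ fails for every $\aa \in \SS$, i.e., that no nonempty $\div$-chain returns to its starting element, which is exactly the content of ``the relation~$\div$ has no cycle'' in the definition.
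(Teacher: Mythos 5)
Your proof is correct and follows essentially the same route as the paper's: both reduce freeness to the comparison property (Lemma~\ref{L:Comp2}) plus the observation that $\divsLD$ projects to $\divs$ in~$\SS$, so that acyclicity forces $\TT(\gg)\not=\TT'(\gg)$ whenever $\TT\not\eqLD\TT'$. The only cosmetic difference is that you argue in the contrapositive direction (from $\TT(\gg)=\TT'(\gg)$ to $\TT\eqLD\TT'$) and make the evaluation homomorphism and its surjectivity explicit, which the paper leaves implicit.
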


\begin{proof}
Assume that $\SS$ is generated by~$\gg$. Let $\TT, \TT'$ be two terms in~$\TERM\op\xx$. As above, we write $\TT(\gg)$ for the evaluation of~$\TT$ at $\xx:= \gg$. As $\SS$ is a left-shelf, $\TT \eqLD \TT'$ certainly implies $\TT(\gg) = \TT'(\gg)$. Conversely, assume $\TT \not\eqLD \TT'$. By Lemma~\ref{L:Comp2}, at least one of $\TT \divsLD \TT'$, $\TT \multsLD \TT'$ holds, say for instance $\TT \divsLD \TT'$. By projection, we deduce $\TT(\gg) \divs \TT'(\gg)$ in~$\SS$, whence $\TT(\gg) \not=\TT'(\gg)$, since $\div$ has no cycle in~$\SS$. Therefore, $\TT \eqLD \TT'$ is equivalent to $\TT(\gg) = \TT'(\gg)$, and $\SS$ is free.  
\end{proof}

We deduce

\begin{prop}[\bf realization]
For every braid~$\aa$, the sub-left-shelf of~$(\Bi, \op)$ generated by~$\aa$ is free.
\end{prop}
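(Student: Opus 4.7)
The plan is to combine the two results just established: Proposition~\ref{P:Acyclic2} (the braid shelf $(\Bi, \op)$ is acyclic) with the freeness criterion of Lemma~\ref{L:Freeness}. To make this work, I only need one small observation, namely that acyclicity passes to sub-shelves.

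First I would fix $\aa \in \Bi$ and let $\SS_\aa$ denote the sub-left-shelf of $(\Bi, \op)$ generated by $\aa$. By construction, $\SS_\aa$ is a monogenerated left-shelf (with generator $\aa$), so in order to invoke Lemma~\ref{L:Freeness} it suffices to establish that $\SS_\aa$ is acyclic.

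Next I would verify that acyclicity is inherited by sub-shelves. Suppose $\bb_0 \div \bb_1 \div \cdots \div \bb_\nn = \bb_0$ is a cycle of $\div$ inside $\SS_\aa$. By definition of $\div$ in $\SS_\aa$, for every $\ii$ there exists some $\xx_\ii \in \SS_\aa$ with $\bb_\ii \op \xx_\ii = \bb_{\ii+1}$; since $\SS_\aa \subseteq \Bi$, the same witnesses show that $\bb_0 \div \bb_1 \div \cdots \div \bb_\nn = \bb_0$ is a $\div$-cycle in the ambient left-shelf $(\Bi, \op)$, contradicting Proposition~\ref{P:Acyclic2}. Hence $\SS_\aa$ is acyclic.

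Finally, applying Lemma~\ref{L:Freeness} to the acyclic monogenerated left-shelf $\SS_\aa$ yields that $\SS_\aa$ is free on $\{\aa\}$, which is exactly the claim. There is no real obstacle here: once Proposition~\ref{P:Acyclic2} is in place, the argument is a one-line inheritance observation plus a direct application of the freeness criterion; the whole content of the statement has already been packaged into Proposition~\ref{P:Acyclic2} and Lemma~\ref{L:Freeness}.
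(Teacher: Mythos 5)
Your proposal is correct and matches the paper's intended argument: the paper simply says ``we deduce'' this proposition from Proposition~\ref{P:Acyclic2} and Lemma~\ref{L:Freeness}, exactly the combination you use. Your explicit check that a $\div$-cycle in the sub-shelf (whose witnesses lie in the sub-shelf, hence in $\Bi$) would be a $\div$-cycle in the ambient braid shelf is the right way to fill in the one detail the paper leaves implicit.
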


This applies in particular for $\aa = 1$; the braids obtained from~$1$ by iterating~$\op$ are called \emph{special} in~\cite{Dgd}, so special braids provide a realization~$\BRsp$ of the rank~$1$ free left-shelf~$\Free_1$. Efficient solutions of the word problem for the presentation~\eqref{E:BraidPres} of~$\Bi$ are known~\cite{Eps, Dfo}, \ie, algorithms that decide whether or not a word in the letters~$\sigg\ii{\pm1}$ represents~$1$ in~$\Bi$. We deduce a simple semantic algorithm for the word problem of~$\LD$. Below, we use~$\BWi$ for the family of all braid words, $\EQUIV$ for a solution of the word problem of~\eqref{E:BraidPres}, $\concat$ for word concatenation, $\SHIFT$ for braid word shifting, and $\INV$ for braid word formal inversion.

\begin{algo}[\bf semantic solution, case of one variable]\label{A:Semant}

\rm\hfill\par

\noindent {\bf Input}: Two terms $\TT$, $\TT'$ in~$\TERM\op\xx$

\noindent {\bf Output}: $\True$ if $\TT$ and $\TT'$ are $\LD$-equivalent, $\False$ otherwise

\lab{1} $\ww:= \EVAL(\TT)$

\lab{2} $\ww':= \EVAL(\TT')$

\lab{3} {\bf return} $\EQUIV(\ww, \ww')$

\medskip

\lab{7} {\bf function} $\EVAL(\TT:$ term): word in~$\BWi$

\lab{8} {\bf if} $\TT = \xx \in \XX$ {\bf then}

\lab{9} \quad {\bf return} $\ew$

\lab{10} {\bf else if} $\TT = \TT_0 \op \TT_1$ {\bf then}

\lab{11} \quad {\bf return} $\EVAL(\TT) \concat \SHIFT(\EVAL(\TT')) \concat \sig1 \concat \INV(\SHIFT(\EVAL(\TT)))$

\end{algo}

The inductive definition of~$\op$ implies that the braid word associated with a term~$\TT$ of size~$\nn$ has length at most~$2^{O(\nn)}$. On the other hand, in connection with the existence of an automatic structure on braid groups, there exist solutions of the word problem for~\eqref{E:BraidPres} of quadratic complexity~\cite{Eps}, so the overall complexity of Algorithm~\ref{A:Semant} is simply exponential---whereas the only proved upper bound for the complexity of Algorithm~\ref{A:Syntact} is a tower of exponentials of exponential height.

\begin{exam}[\bf semantic solution]\label{X:Semantic}
Consider $\TT:= (\xx \op \xx) \op (\xx \op (\xx \op \xx))$ and $\TT' := \xx \op ((\xx \op \xx) \op (\xx \op \xx))$. The reader is invited to check
$$\EVAL(\TT) = \sig1 \sig3 \sig2 \sig1 \siginv2, \qquad \EVAL(\TT') = \sig2 \sig3 \sig2 \siginv3 \sig1.$$
As the latter braid words are equivalent (they both represent~$\sig3 \sig2 \sig1$ in~$\Bi$), we conclude that $\TT$ and~$\TT'$ are $\LD$-equivalent.
\end{exam}

%%%%
\section{Word problem, the case of shelves II}\label{S:WPLD2}

We thus obtained in Section~\ref{S:WPLD1} a positive solution for the word problem of selfdistributivity in the case of terms in one variable. At this point, several questions remain open: Where does the exotic braid operation of~\eqref{E:BraidShelf} or~\eqref{E:BraidLD} come from? What about the case of terms involving more than one variable? Can one find solutions based on normal terms, that is, describe a distinguished representative in every $\LD$-class? Are there efficient syntactic solutions (the one of Section~\ref{S:WPLD1} is not)? These four questions are addressed in the four subsections below.

%%%%
\subsection{Where does the braid shelf come from?}\label{SS:Thompson}

The answer lies in the approach developed in~\cite{Dfb}, which is parallel to the treatment of associativity and commutativity by R.\,Thompson in the 1970s~\cite{McT, Tho, CFP}. In addition to the braid application, one also obtains a direct proof that the free left-shelf~$\Free_1$ is acyclic, without refering to any concrete realization like the one based on braids or the one based on iterations of elementary embeddings. 

The idea is to see the relations~$\eqLD$ and~$\expLD$ as the result of applying the action of a monoid on terms. To this end, we take into account the positions and the orientations, where the selfdistributivity law is applied, as already alluded to in the definition of the procedure~$\EXPAND$ of Algorithm~\ref{A:Syntact}. Every term~$\TT$ in~$\TERM\op\XX$ that is not a variable (\ie, an element of~$\XX$) admits a left and a right subterm. Iterating, we can specify each subterm of~$\TT$ by a finite sequence of~0s (for left) and~1s (for right): such finite sequences will be called \emph{addresses}, denoted~$\alpha, \beta$,..., and we use $\sub\TT\alpha$ for the \emph{$\alpha$-subterm} of~$\TT$, that is, the subterm of~$\TT$ that corresponds to the fragment below the address~$\alpha$ in the tree associated with~$\TT$. With this notation, $\sub\TT0$ (\resp $\sub\TT1$) is the left (\resp right) subterm of~$\TT$. Note that $\sub\TT\alpha$ exists only for~$\alpha$ short enough (the family of all~$\alpha$s for which $\sub\TT\alpha$ exists will be called the \emph{skeleton} of~$\TT$).

\begin{defi}[{\bf operator}~$\LDop\alpha$, {\bf monoid}~$\Geom\LD$]
For each address~$\alpha$, we denote by~$\LDop\alpha$ the partial operator on~$\TERM\op\XX$ such that $\TT \act \LDop\alpha$ is defined if $\sub\TT\alpha$ exists and can be written as $\TT_1 \op (\TT_2 \op \TT_3)$, in which case $\TT \act \LDop\alpha$ is the term obtained by replacing the latter subterm with $(\TT_1 \op \TT_2) \op (\TT_1 \op \TT_3)$. The \emph{geometry monoid} of~$\LD$ is the monoid~$\Geom\LD$ generated by all operators~$\LDop\alpha$ and their inverses under composition
\end{defi}

Thus applying~$\LDop\alpha$ means applying the LD law at position~$\alpha$ in the expanding direction. By definition, two terms~$\TT, \TT'$ are LD-equivalent if, and only if, some element of the monoid~$\Geom\LD$ maps~$\TT$ to~$\TT'$. When the selfdistributivity law~$\LD$ is replaced with the associativity law~$\Ass$, the corresponding geometry monoid~$\Geom\Ass$ turns out to (essentially) Richard Thompson's group~$F$~\cite{Tho}. It is important to note that the action of~$\Geom\LD$ is partial: for instance, $\TT \act \LDop\alpha$ is defined only when~$\alpha$ is short enough (precisely: when $\alpha0$, $\alpha10$, and $\alpha11$ belong to the skeleton of~$\TT$). 

For the sequel, it is crucial to work in a group context. However, $\Geom\LD$ is not a group, but only an inverse monoid: exchanging~$\LDop\alpha$ and~$\LDop\alpha\inv$ and reversing the order of factors provides for every~$\gg$ in~$\Geom\LD$ an element~$\gg\inv$ satisfying $\gg \gg\inv \gg = \gg$ and $\gg\inv \gg \gg\inv = \gg\inv$, but $\gg\gg\inv$ is only the identity of its domain. Contrary to the case of associativity, no quotient-group of~$\Geom\LD$ is useful. However, one can \emph{guess} a list of relations~$\Rel\LD$ that connect the maps~$\LD_\alpha$ and consider the abstract group~$\Geomt\LD$ presented by~$\Rel\LD$: if $\Rel\LD$ is exhaustive enough, we can hope to work with~$\Geomt\LD$ as we did with~$\Geom\LD$. 

Here is the key point. The absorption property (Lemma~\ref{L:Absorp}) implies that, for every~$\TT$ in~$\TERM\op\xx$, the terms~$\xx^{[\nn + 1]}$ and~$\TT \op \xx^{[\nn]}$ are LD-equivalent for~$\nn$ large enough. Hence, there exists an element in~$\Geom\LD$ that maps~$\xx^{[\nn + 1]}$ to~$\TT \op \xx^{[\nn]}$. Reading step by step the inductive proof of Lemma~\ref{L:Absorp} shows that such an element can be defined inductively as follows (note that $\nn$ does not occur): 

\begin{defi}[\bf blueprint]\label{D:Blueprint}
For~$\TT$ in~$\TERM\op\xx$, we inductively define~$\chi(\TT)$ in~$\Geom\LD$ by
\begin{equation}\label{E:Blueprint}
\chi(\TT):= 
\begin{cases}
1&\text{for $\TT = \xx$},\cr
\chi(\TT_0) \cdot \sh_1(\chi(\TT_1)) \cdot \LDop\emptyset \cdot \sh_1(\chi(\TT_1))\inv&\text{for $\TT = \TT_0 \op \TT_1$},
\end{cases}
\end{equation}
\end{defi}

We denote by~$\chit(\TT)$ the element of the group~$\Geomt\LD$ defined by a similar induction. Then $\chit(\TT)$ should be viewed as a sort of copy of~$\TT$ inside~$\Geomt\LD$ (the ``\emph{blueprint}'' of~$\TT$). We then can obtain a non-conditional proof of the exclusion property (Lemma~\ref{L:Excl1}):

\begin{lemm}[\bf exclusion property]\label{L:Excl2}
The relations $\eqLD$ and~$\divsLD$ on~$\TERM\op\xx$ exclude one another.
\end{lemm}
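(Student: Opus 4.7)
The plan is to exploit the blueprint construction just introduced in order to transport the question from terms into the abstract group~$\Geomt\LD$, inside which a simple counting argument will expose a contradiction. Suppose $\TT \divsLD \TT'$: by the very definition of~$\divsLD$, there exist $\TT_1 \wdots \TT_k$ with $k \ge 1$ and an intermediate term
$$\hat\TT := (\pdots((\TT \op \TT_1) \op \TT_2) \pdots) \op \TT_k$$
satisfying $\TT' \eqLD \hat\TT$. Thus it is enough to rule out $\TT \eqLD \hat\TT$. Assume, for contradiction, that $\TT \eqLD \hat\TT$, and apply the blueprint map~$\chit$.

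Unfolding the inductive formula~\eqref{E:Blueprint} $k$ times gives, inside~$\Geomt\LD$, an identity of the form
$$\chit(\hat\TT) = \chit(\TT) \cdot A_1 A_2 \cdots A_k,$$
where each factor~$A_i$ is a conjugate of the distinguished generator~$\LDop\emptyset$ (specifically, $A_i$ is the innermost conjugation pattern $\sh_1(\chit(\TT_i')) \cdot \LDop\emptyset \cdot \sh_1(\chit(\TT_i'))\inv$ for a suitable~$\TT_i'$ produced by the induction). Provided the relations~$\Rel\LD$ are rich enough to guarantee that $\chit$ descends to~$\eqLD$-classes, the hypothesis $\TT \eqLD \hat\TT$ collapses the previous identity into $A_1 A_2 \cdots A_k = 1$ in~$\Geomt\LD$. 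To reach a contradiction, I would introduce a group homomorphism $\ew: \Geomt\LD \to \ZZZZ$ sending every generator~$\LDop\alpha$ to~$1$. Since the defining relations in~$\Rel\LD$ are pentagon-type equalities that express applying the LD law at two nested positions in two parallel ways, each of them carries the same number of $\LDop\alpha$-letters on its two sides; hence~$\ew$ is well defined. Evaluating $\ew$ on $A_1 \cdots A_k = 1$ then yields $k = 0$, contradicting $k \ge 1$.

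The main obstacle is the descent of~$\chit$ to~$\eqLD$-classes, \ie, verifying that every atomic LD-expansion $\TT \expLD \TT \act \LDop\alpha$ preserves the blueprint element. By induction on the length of~$\alpha$ and on the structure of the three subterms sitting below~$\alpha$, this reduces to checking a finite list of explicit equalities between words in the generators~$\LDop\beta$; these equalities are precisely what must be declared as the defining relations~$\Rel\LD$ of~$\Geomt\LD$. Once~$\Rel\LD$ is so calibrated, both the descent of~$\chit$ and the existence of~$\ew$ are formal, and the two-line argument above delivers the exclusion property. The delicate but routine point of the whole proof is therefore the bookkeeping required to present $\Geomt\LD$ in a way that simultaneously makes~$\chit$ a class function and leaves room for a generator-counting morphism to~$\ZZZZ$.
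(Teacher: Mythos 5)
Your unfolding of the blueprint, $\chit(\hat\TT) = \chit(\TT)\, A_1 \cdots A_k$ with each~$A_\ii$ a $\sh_1$-conjugate of~$\LDop\emptyset$, is correct, but both steps you build on it fail. The central error is the claim that $\chit$ descends to $\eqLD$-classes. It does not, and no calibration of~$\Rel\LD$ can make it do so: the blueprint~$\chi(\TT)$ is, by construction, the operator carrying the \emph{term}~$\xx^{[\nn+1]}$ to the \emph{term}~$\TT \op \xx^{[\nn]}$, so two distinct but LD-equivalent terms necessarily have distinct blueprints; the invariant attached to an $\eqLD$-class is only the left coset $\chit(\TT)\,\sh_0(\Geomt\LD)$. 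Example~\ref{X:Solution} is an explicit counterexample to your claim: there $\TT \eqLD \TT'$ holds while $\chit(\TT)\inv\chit(\TT') = \LDop{01}\LDop{0}\LDop{01}\inv\LDop{00}\inv\LDop{0}\inv$ is a nontrivial element of~$\sh_0(\Geomt\LD)$. Consequently, from $\TT \eqLD \hat\TT$ you may only conclude $A_1 \cdots A_k \in \sh_0(\Geomt\LD)$, not $A_1 \cdots A_k = 1$; and showing that a nonempty product of conjugates of~$\LDop\emptyset$ cannot lie in the subgroup~$\sh_0(\Geomt\LD)$ is precisely what the paper identifies as the key point, requiring the analysis of~$\Geomt\LD$ as a group of right fractions. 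That step is the mathematical content of the lemma, and it is absent from your argument.

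The counting morphism is also broken: a homomorphism $\Geomt\LD \to \ZZZZ$ sending \emph{every} generator to~$1$ does not exist, because the relations of~$\Rel\LD$ are not length-homogeneous. The paper displays $\LDop{1^\ii}\,\LDop{1^{\jj}}\,\LDop{1^\ii} = \LDop{1^{\jj}}\,\LDop{1^\ii}\,\LDop{1^\ii}\,\LDop{1^\ii 0}$ (for $\jj = \ii+1$), with three letters on one side and four on the other; this imbalance is intrinsic to selfdistributivity, which duplicates a subterm at each application, so the relations are not ``pentagon-type equalities with the same number of letters on both sides.'' One could try a weight supported on~$\LDop\emptyset$ alone (which would vanish on~$\sh_0(\Geomt\LD)$ and be positive on~$A_1\cdots A_k$), but verifying that such a weight is compatible with the \emph{full} presentation~$\Rel\LD$---which the paper does not even display---is exactly the kind of nontrivial statement your proposal defers to ``formal bookkeeping''; had such a one-line homomorphism argument been available, the fraction-group machinery would be unnecessary. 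As it stands, the proof has a genuine gap at its central step.
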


\begin{proof}[Sketch of the proof]
Assume $\TT \eqLD \TT'$. There exists~$\gg$ in~$\Geom\LD$ that maps~$\TT$ to~$\TT'$, and therefore~$\sh_0(\gg)$ maps~$\TT \op \xx^{[\nn]}$ to~$\TT' \op \xx^{\nn}$, where $\sh_0(\gg)$ is the shifted version of~$\gg$ that consists in applying~$\gg$ in the left subterm (that is, replacing~$\LDop\alpha$ with~$\LDop{0\alpha}$ everywhere in~$\gg$). Then both $\chi(\TT')$ and $\chi(\TT) \sh_0(\gg)$ map~$\xx^{[\nn + 1]}$ to~$\TT' \op \xx^{[\nn]}$, so the quotient $\chi(\TT)\inv \chi(\TT')$ belongs to~$\sh_0(\Geom\LD)$. Using the explicit presentation of the group~$\Geomt\LD$, one checks that, similarly, the quotient $\chit(\TT)\inv \chit(\TT')$ belongs to~$\sh_0(\Geomt\LD)$. 

Assume now $\TT \divsLD \TT'$. Then one reduces to the case $\TT \divs \TT'$, in which case $\chit(\TT)\inv \chit(\TT')$ has an expression in which the generator~$\LDop\emptyset$ appears but its inverse does not. An algebraic study of the group~$\Geomt\LD$ as group of right fractions then enables one to prove that $\chit(\TT)\inv \chit(\TT')$ does not belong to~$\sh_0(\Geomt\LD)$---this is the key point.
\end{proof}

As a first consequence, one directly deduces:

\begin{prop}[\bf acyclic III]\label{P:Acyclic3}
The free left-shelf~$\Free_1$ is acyclic.
\end{prop}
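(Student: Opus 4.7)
My plan is to derive Prop.~\ref{P:Acyclic3} as an immediate corollary of the exclusion property (Lemma~\ref{L:Excl2}), using the standard identification $\Free_1 = \TERM\op\xx/{\eqLD}$. The division relation on~$\Free_1$ is by construction the $\eqLD$-projection of~$\divLD$; therefore, a putative cycle in~$\Free_1$ would lift to a chain of $\divLD$-steps in~$\TERM\op\xx$ whose endpoints are $\eqLD$-equivalent, and the exclusion property will forbid exactly this situation.

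Concretely, I would argue by contradiction: suppose $\div$ has a cycle in~$\Free_1$, so there exist $\eqLD$-classes $[T_0], \ldots, [T_n]$ with $n \ge 1$, $[T_i] \div [T_{i+1}]$ for each $i < n$, and $[T_n] = [T_0]$. Each step provides a term~$S_i$ with $T_i \op S_i \eqLD T_{i+1}$, that is, $T_i \divLD T_{i+1}$; iterating gives $T_0 \divsLD T_n$. Since the defining condition $T \op S \eqLD T'$ is manifestly preserved under replacing~$T'$ by any $\eqLD$-equivalent term, both $\divLD$ and its transitive closure~$\divsLD$ are $\eqLD$-invariant on the right, so $T_n \eqLD T_0$ lets us conclude $T_0 \divsLD T_0$. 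As $T_0 \eqLD T_0$ is trivial, this directly contradicts Lemma~\ref{L:Excl2}.

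The proposition itself requires no new ideas; all of the conceptual weight sits in Lemma~\ref{L:Excl2}, whose proof hinges on the blueprint map~$\chit$ and the analysis of $\Geomt\LD$ as a group of right fractions to show that $\chit(T)\inv\chit(T')$ cannot simultaneously lie in $\sh_0(\Geomt\LD)$ (as required by $T \eqLD T'$) and carry an uncancellable occurrence of the generator~$\LDop\emptyset$ (as required by $T \divsLD T'$). The only thing to watch in the present derivation is the bookkeeping of $\eqLD$-invariance when passing from~$\TERM\op\xx$ down to the quotient~$\Free_1$, which is routine. Pleasingly, the upshot is the existence of an acyclic left-shelf without recourse to large cardinals (Prop.~\ref{P:Acyclic1}) or to braids (Prop.~\ref{P:Acyclic2}), hence an unconditional proof of the freeness criterion (Lemma~\ref{L:Freeness}) and of decidability of the one-variable word problem.
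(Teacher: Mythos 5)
Your proposal is correct and matches the paper's intent exactly: the paper derives Prop.~\ref{P:Acyclic3} directly from the exclusion property (Lemma~\ref{L:Excl2}) with no further argument given, and your lifting of a putative cycle in~$\Free_1$ to a relation $\TT_0 \divsLD \TT_0$ in~$\TERM\op\xx$, contradicting the exclusion of~$\divsLD$ and~$\eqLD$, is precisely the routine verification being left to the reader. The bookkeeping of $\eqLD$-invariance of~$\divLD$ on the right is handled correctly.
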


In the context of Prop.~\ref{P:CondSol}, this gives another proof of the validity of Algorithm~\ref{A:Syntact}, hence of the solvability of the word problem of~$\LD$---the first complete one chronologically~\cite{Dez}. We also obtain another solution using the group~$\Geomt\LD$. Indeed, Lemma~\ref{L:Excl2} shows that $\TT \eqLD \TT'$ holds if, and only if, $\chit(\TT)\inv \chit(\TT')$ belongs to the subgroup~$\sh_0(\Geomt\LD)$, which can be tested effectively (we skip the description).

\begin{exam}[\bf blueprint solution]\label{X:Solution}
Consider $\TT:= (\xx \op \xx) \op (\xx \op (\xx \op \xx))$ and $\TT' := \xx \op ((\xx \op \xx) \op (\xx \op \xx))$ again. Then one finds (we write $\alpha$ for~$\LDop\alpha$):
$$\chit(\TT)= \LDop\emptyset \, \LDop{11} \, \LDop{1} \, \LDop\emptyset \, \LDop1\inv, \qquad \chit(\TT')= \LDop{1} \, \LDop{11} \, \LDop{1} \, \LDop{11}\inv \, \LDop1,$$
and we can check $\chit(\TT)\inv \chit(\TT') = \LDop{01} \LDop{0} \LDop{01}\inv \LDop{00}\inv \LDop0\inv$: the latter element of the group~$\Geomt\LD$ belongs to the image of~$\sh_0$, hence $\TT'$ and~$\TT'$ are $\LD$-equivalent.
\end{exam}

A second consequence is the construction of the braid operation of~\eqref{E:BraidLD}. The explicit form of the relations~$\Rel\LD$---which we did not mention so far---implies that Artin's braid group~$\Bi$ is a quotient of the geometry group~$\Geomt\LD$, namely the one obtained when all generators~$\LDop\alpha$ such that $\alpha$ contains at least one~$0$ are collapsed. Indeed, the remaining relations take the form
\begin{gather*}
\LDop{1^\ii} \, \LDop{1^\jj} \, \LDop{1^\ii} = \LDop{1^\jj} \, \LDop{1^\ii} \,\LDop{1^\ii} \, \LDop{1^\ii0} \qquad\text{for $\jj = \ii + 1 \ge 1$},\\ 
\LDop{1^\ii} \, \LDop{1^\jj} = \LDop{1^\jj} \, \LDop{1^\ii} \qquad\text{for $\jj \ge \ii + 2 \ge 2$,}
\end{gather*}
which project to the relations of~\eqref{E:BraidPres} when $\LDop{1^\ii0}$ is collapsed and $\LDop{1^\ii}$ is mapped to~$\sig{\ii+1}$. We saw that $\TT \eqLD \TT'$ holds if, and only if, the quotient $\chit(\TT)\inv \chit(\TT')$ belongs to the subgroup~$\sh_0(\Geomt\LD)$. Hence, when all generators~$\LDop\alpha$ with~$0$ in~$\alpha$ are collapsed, $\chit(\TT)\inv \chit(\TT')$ goes to~$1$, that is, $\chit(\TT)$ and~$\chit(\TT')$ have the same image. In other words, if we consider the operation on the quotient that mimicks the inductive definition of~\eqref{E:Blueprint}, then that operation must obey the $\LD$-law. The reader is invited to check that the operation introduced in this way is precisely that of~\eqref{E:BraidLD}. Therefore the latter does not appear out of the blue, but it directly stems from~\eqref{E:Blueprint}, which itself follows the inductive proof of the absorption property of Lemma~\ref{L:Absorp}.

We conclude with two remarks. First, the relations of~$\Rel\LD$ can be stated so as to involve no~$\LDop\alpha\inv$ and, therefore, one can introduce the monoid~$\Geomtp\LD$ they present. The main open question in this area is 

\begin{ques}[\bf embedding conjecture]\label{Q:Emb}
Does the monoid~$\Geomtp\LD$ embeds in the group~$\Geomt\LD$?
\end{ques}

A positive answer is conjectured. It amounts to proving that $\Geomtp\LD$, which is left cancellative, is also right cancellative, and it would imply a number of structural properties for selfdistributivity~\cite[Chapter~IX]{Dgd}, in particular that the LD-expansion relation~$\expLD$ admits least upper bounds, thus reminiscent of associahedra and Tamari lattices for associativity~\cite{Mul}.

The second remark is that a similar approach can be developed for other algebraic laws, for instance for the ``central duplication'' law $\xx(\yy\zz) = (\xx\yy)(\yy\zz)$, where it provides the only known solution of the word problem~\cite{Dgj}.

%%%%
\subsection{Normal forms}\label{SS:NF}

Connected with the word problem of selfdistributivity is the question of finding normal forms, namely finding one distinguished term in every $\LD$-equivalence class. Several solutions have been described, in particular by R.\,Laver in~\cite{Lvb} and in subsequent works~\cite{Lvc, LvM}. Here we sketch the solution developed in~\cite{Dfd}, which is simpler and directly follows from the properties mentioned above. Once again, we concentrate on the case of one variable.

Let $\TT$ belong to~$\TERM\op\xx$. By the absorption property (Lemma~\ref{L:Absorp}), the term~$\TT \op \xx^{[\nn]}$ is $\LD$-equivalent to~$\xx^{[\nn + 1]}$ for~$\nn$ large enough, hence, by the confluence property (Lemma~\ref{L:Confl}), they admit a common LD-expansion, which, by the proof of Lemma~\ref{L:Confl}, may be assumed to be of the form~$\partial^\pp\xx^{[\nn]}$ for some~$\pp$. Then, as explained in the proof of the comparison property (see Fig.~\ref{F:Comp}), there exists a number~$\rr$ such that the iterated left subterm~$\LS^\rr(\partial^\pp\xx^{[\nn]})$ is $\LD$-equivalent to~$\TT$, and, for given~$\pp$ and~$\nn$, this number~$\rr$ is necessarily unique by the exclusion property. As is usual in this context, the index~$\nn$ does not matter (provided it is large enough) and, so, by choosing~$\pp$ to be minimal, we obtain a distinguished representative:

\begin{prop}[\bf normal form]\label{P:Normal}
Call a term~$\TT$ \emph{normal} if $\TT$ is an iterated left subterm of~$\partial^\pp\xx^{[\nn]}$ for some~$\pp$ and~$\nn$, and $\pp$ is minimal with that property. Then every term in~$\TERM\op\xx$ is $\LD$-equivalent to a unique normal term.
\end{prop}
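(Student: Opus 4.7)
The plan is to combine all the structural tools established in Section~\ref{SS:Comparison}---the absorption property (Lemma~\ref{L:Absorp}), the confluence property (Lemma~\ref{L:Confl}), and the comparison property (Lemma~\ref{L:Comp2})---with the exclusion property (Lemma~\ref{L:Excl2}). For \emph{existence}, given a term~$\TT$, absorption yields $\TT \op \xx^{[\nn]} \eqLD \xx^{[\nn+1]}$ for~$\nn$ large enough; confluence then furnishes a common LD-expansion which, following the proof of Lemma~\ref{L:Confl}, can be chosen of the form~$\partial^\pp \xx^{[\nn+1]}$. Tracking the left subterm~$\TT$ of $\TT \op \xx^{[\nn]}$ through this expansion, exactly as in the proof of Lemma~\ref{L:Comp2}, produces an index~$\rr$ with $\LS^\rr(\partial^\pp \xx^{[\nn+1]}) \eqLD \TT$. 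Hence the set of parameters~$\pp$ for which some term $\LD$-equivalent to~$\TT$ appears as an iterated left subterm of $\partial^\pp \xx^{[\nn]}$ (for some~$\nn$) is non-empty, and its minimum~$\pp^*$ (by well-ordering of~$\NNNN$) yields a normal term $\LD$-equivalent to~$\TT$.

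For \emph{uniqueness}, the central observation is an auxiliary lemma: any two iterated left subterms of a common term that are $\LD$-equivalent must coincide. Indeed, the iterated left subterms of a fixed term form a chain under the strict iterated-left-subterm relation~$\divs$, so if two of them differ, one satisfies $\TT_i \divs \TT_j$, hence $\TT_i \divsLD \TT_j$, contradicting $\TT_1 \eqLD \TT_2$ by exclusion. In particular, for any fixed pair~$(\pp, \nn)$, the index~$\rr$ satisfying $\LS^\rr(\partial^\pp \xx^{[\nn]}) \eqLD \TT$ is uniquely determined (when it exists). Now given two normal terms $\TT^*_1 \eqLD \TT^*_2$ sharing the minimum~$\pp^*$, applying absorption and confluence to $\TT^*_1 \op \xx^{[\nn]}$ and $\TT^*_2 \op \xx^{[\nn]}$ provides a common LD-expansion~$\partial^\pp \xx^{[\nn+1]}$; the comparison argument yields indices~$\rr'_i$ with $\TT^*_i \eqLD \LS^{\rr'_i}(\partial^\pp \xx^{[\nn+1]})$, and the auxiliary lemma forces $\rr'_1 = \rr'_2$. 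So both $\TT^*_1$ and $\TT^*_2$ are $\LD$-equivalent to the same term~$\TT^{**}$, itself an iterated left subterm of~$\partial^\pp \xx^{[\nn+1]}$.

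To promote this $\LD$-equivalence into the syntactic equality $\TT^*_1 = \TT^*_2$, one uses the minimality of~$\pp^*$ together with the uniqueness of~$\rr$ at fixed~$(\pp^*, \nn)$: for~$\nn$ sufficiently large, the iterated left subterm~$\LS^\rr(\partial^{\pp^*} \xx^{[\nn]})$ in the equivalence class of $\TT^*_1$ stabilizes to a specific, $\nn$-independent syntactic term, which must then coincide with both $\TT^*_1$ and $\TT^*_2$. The main obstacle is precisely this stabilization: since iterated-left-subterm-hood in $\partial^{\pp^*} \xx^{[\nn]}$ is not monotone in~$\nn$, the argument proceeds by induction on the recursive identity $\partial \xx^{[\nn+1]} = \partial \xx^{[\nn]} \op \partial \xx^{[\nn]}$ and its iterates, showing that growth in~$\nn$ is compensated by a suitable shift of~$\rr$ that preserves the underlying syntactic representative.
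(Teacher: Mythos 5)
Your proposal follows the paper's argument essentially verbatim: existence via absorption, confluence, and the comparison-property construction of an iterated left subterm of~$\partial^\pp\xx^{[\nn]}$ equivalent to the given term, and uniqueness via the exclusion property applied to the chain (under~$\divs$) of iterated left subterms of a common term. The step you single out as the main obstacle---stabilization of the representative as $\nn$ varies---is precisely the step the paper also glosses over (``the index $\nn$ does not matter''), and your unproved assertion that two $\LD$-equivalent normal terms share the same minimal degree~$\pp^*$ rests on the same monotonicity of the family of iterated left subterms in~$\pp$ as well as in~$\nn$; the paper only supplies this machinery afterwards, via the descent lemma describing the cuts of~$\partial\TT$ in terms of those of~$\TT$.
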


The above construction is effective, but it does not give an explicit description of normal terms. We provide it now. To this end, it is convenient to extend the notion of an iterated left subterm into that of a \emph{cut}. By definition, an iterated left subterm of~$\TT$ corresponds to extracting from~$\TT$ the fragment that lies under some address~$0^\rr$, hence on the left of some leaf with address~$0^\rr 1^\ss$. We extend the definition to all fragments corresponding to any leaf in (the tree associated with)~$\TT$.

The easiest way to state a formal definition is to start from the \emph{(right) Polish} expression of~$\TT$: 

\begin{defi}[\bf Polish expression]\label{D:Polish}
For~$\TT$ in~$\TERM\op\XX$, we denote by~$\Pol\TT$ the word in the letters~$\xx$ and~$\opp$ inductively defined by~$\Pol\xx := \xx$ and $\Pol\TT:= \Pol{\TT_0} {\concat} \Pol{\TT_1} {\concat} \opp$ for~$\TT = \TT_0 \op \TT_1$ (using $\concat$ for concatenation). 
\end{defi}

The order of symbols in~$\Pol\TT$ corresponds to the left--right--root enumeration of the associated binary tree, and one obtains a one-to-one correspondence between the nodes in the tree (associated with)~$\TT$, hence what we called the skeleton of~$\TT$, and the letters of the word~$\Pol\TT$. We denote by~$\add(\pp, \TT)$ the address that corresponds to the $\pp$th letter in~$\Pol\TT$.

\noindent\begin{minipage}{\textwidth}\rightskip25mm
\hspace{19pt}For instance, \VR(3,0)for $\TT = (\xx \op (\xx \op \xx)) \op \xx$ (see on the right), one finds $\Pol\TT = \xx\xx\xx{\opp}{\opp}\xx{\opp}$, and the correspondence between letters and addresses is \VR(0,1) as follows:

\hfill\begin{tabular}{c|p{5.5mm}p{5.5mm}p{5.5mm}p{5.5mm}p{5.5mm} p{5.5mm} p{5.5mm} p{5.5mm}}
$\pp$ & \hfil1 & \hfil2 & \hfil3 & \hfil4 & \hfil5 & \hfil6 & \hfil7\\
\hline
$\pp$th letter of~$\Pol\TT$\VR(3.5,1.5) &\hfil$\xx$ &\hfil$\xx$ &\hfil$\xx$ &\hfil$\opp$ &\hfil$\opp$ &\hfil$\xx$ &\hfil$\opp$\\
\hline
$\add(\pp, \TT)$ &\hfil$00$ &\hfil{$010$} &\hfil$011$&\hfil$01$ &\hfil$0$ &\hfil$1$ &\hfil$\emptyset$
\end{tabular}\hfill
\begin{picture}(0,0)(-10,0)
\psline(0,5)(3,10)(6,5)(4,0)(6,5)(8,0)(6,5)(3,10)(7,15)(11,10)
\put(-1,2){$\xx$}
\put(3,-3){$\xx$}
\put(7,-3){$\xx$}
\put(10,7){$\xx$}
\end{picture}
\end{minipage}

A \VR(3.5,0) word~$\ww$ in the letters~$\{\xx, \opp\}$ is the Polish expression of a term if, and only if, the number~$\vert\ww\vert_\XX$ of~variables in~$\ww$ is one more than the number~$\vert\ww\vert_{\opp}$ of~$\opp$ and, for every nonempty initial factor~$\ww'$ of~$\ww$, one has $\vert\ww'\vert_\XX > \vert\ww'\vert_{\opp}$. Iterated left subtrees then correspond to prefixes~$\ww'$ of~$\Pol\TT$ that themselves are Polish expressions, \ie, satisfy $\vert\ww'\vert_\XX = \vert\ww'\vert_{\opp} + 1$. 

For every~$\alpha$ that is the address of a leaf in~$\TT$, hence corresponds to a variable~$\xx_\ii$ in~$\Pol\TT$, if $\ww'$ is the prefix of~$\Pol\TT$ that finishes with this letter~$\xx_\ii$, there exists a unique nonnegative number~$\pp$ such that $\ww' {\concat} {\opp}^\pp$ is a Polish expression: the number~$\pp$ is the local defect in letters~$\opp$, and it is zero if, and only if, $\ww'$ is the Polish expression of an iterated left subterm of~$\TT$, hence if, and only if, $\alpha$ is of the form~$0^\rr 1^\ss$.

\begin{defi}[\bf cut]
For every term~$\TT$ and every address~$\alpha$ of a leaf of~$\TT$, we let~$\cut(\TT, \alpha)$ be the term, whose Polish expression is the word~$\ww' {\concat} {\opp}^\pp$ as above.
\end{defi}

\begin{exam}[\bf cut]
Let again $\TT$ be $(\xx \op (\xx \op \xx)) \op \xx$. The size of~$\TT$ (number of occurrences of variables) is~$4$, so there are four~leaves, at addresses~$00$, $010$, $011$, and $1$, and the corresponding four~cuts are
$$\cut(\TT, 00) = \xx, \ \cut(\TT, 010) = \xx \op \xx, \ \cut(\TT, 011) = \xx \op (\xx \op \xx), \ \cut(\TT, 1) = \TT.$$ 
\end{exam}

Cuts can alternatively be defined without mentioning the Polish expressions as follows: if a binary address~$\alpha$ contains $\mm$~times~$1$, it has the form $0^{\rr_0} 1 0^{\rr_1} 1 \pdots 1 0^{\rr_\mm}$, and, then, when $\alpha$ is the address of a leaf in~$\TT$, one has $$\cut(\TT, \alpha) = \sub\TT{\alpha_0} \op (\sub\TT{\alpha_1} \op \pdots (\sub\TT{\alpha_{\mm - 1}} \op \sub\TT{\alpha_\mm}) \pdots )),$$
where we put $\alpha_\kk:= 0^{\rr_0} 1 0^{\rr_1} 1 \pdots 1 0^{\rr_\kk + 1}$ for $0 \le \kk < \mm$, and $\alpha_\mm := \alpha$.

The main result now is that the cuts of the term~$\partial\TT$ of~\eqref{E:Der} can be simply described from those of~$\TT$, inductively leading to a description of normal terms.

\begin{defi}[\bf descent]
If $\alpha, \beta$ are binary addresses, declare $\alpha \ggg \beta$ if there exists an adress~$\beta$ satisfying $\alpha = \gamma 1^\pp$ and $\beta = \gamma 0 \delta$ for some~$\pp \ge 1$ and~$\delta$. Define a \emph{descent} in~$\TT$ to be a finite sequence of addresses $(\alpha_1 \wdots \alpha_\mm)$ such that, for every~$\ii$, the address~$\alpha_\ii$ is that of a leaf in~$\TT$ and $\alpha_\ii \ggg \alpha_{\ii + 1}$ holds for~$\ii < \mm$. 
\end{defi}

\begin{lemm}
There exists a unique bijection~$\phi$ between the leaves of~$\partial\TT$ and the descents of~$\TT$ such that $\phi(\alpha) = (\alpha_1 \wdots \alpha_\mm)$ implies
$$\cut(\partial\TT, \alpha) \eqLD \cut(\TT, \alpha_1) \op (\cut(\TT, \alpha_2) \, \op \pdots (\cut(\TT,{\alpha_{\mm - 1})} \op \cut(\TT, \alpha_\mm)) \pdots)).$$
\end{lemm}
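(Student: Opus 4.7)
The plan is to proceed by structural induction on~$\TT$. The base case $\TT = \xx$ is immediate: $\partial\xx = \xx$ has a single leaf~$\emptyset$ with cut~$\xx$, while $\xx$ has a single descent~$(\emptyset)$ whose right-hand side is $\cut(\xx, \emptyset) = \xx$. Setting $\phi(\emptyset) := (\emptyset)$ satisfies the equation and is clearly the unique bijection.

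For the inductive step, write $\TT = \TT_0 \op \TT_1$ so that $\partial\TT = \partial\TT_0 \otimes \partial\TT_1$. I would first establish, by a sub-induction on~$\TT'$, a structural sub-lemma describing the leaves and cuts of~$\SS \otimes \TT'$ for arbitrary terms~$\SS$ and~$\TT'$: the leaves are exactly the addresses $\alpha'1$ (``terminal'') and $\alpha'0\beta$ (``internal''), where $\alpha'$ ranges over leaves of~$\TT'$ and~$\beta$ over leaves of~$\SS$; a Polish-expression analysis, using the recursive definition of~$\otimes$, then yields explicit right-associated expressions for $\cut(\SS \otimes \TT', \alpha'1)$ and $\cut(\SS \otimes \TT', \alpha'0\beta)$ built from factors of the form $\SS \otimes \sub{\TT'}{\alpha'_i}$ (from the $\TT'$-scaffolding), $\SS$ itself (a junction factor), and subtrees of~$\SS$ (from the $\SS$-copy).

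Specializing this to $\SS = \partial\TT_0$, $\TT' = \partial\TT_1$, and using the inductive bijections $\phi_0, \phi_1$ for~$\TT_0, \TT_1$, I would define~$\phi$ by two cases. A terminal leaf~$\alpha'1$ of~$\partial\TT$ maps to the descent of~$\TT$ obtained by prepending~$1$ to each entry of $\phi_1(\alpha')$, which is itself a descent because $\ggg$ is trivially invariant under a common prefix. An internal leaf~$\alpha'0\beta$ maps to the concatenation of the $1$-prefixed $\phi_1(\alpha')$ and the $0$-prefixed $\phi_0(\beta)$; its validity as a descent reduces to the junction step $1\alpha'_{\mm'} \ggg 0\beta_1$, which is exactly what the definition of~$\ggg$ demands and is forced by the sub-lemma's assembly of the cut. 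The target cut formula for~$\phi$ then results from combining the sub-lemma with the two inductive cut identities.

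Bijectivity of~$\phi$ follows from the two-way classification of leaves of~$\partial\TT$ matching the two-way classification of descents of~$\TT$ (pure $\TT_1$ versus crossing into~$\TT_0$), together with the inductive bijectivity of~$\phi_0$ and~$\phi_1$. Uniqueness follows from the exclusion property (Lemma~\ref{L:Excl2}): no two distinct descents can yield LD-equivalent right-hand sides, so the bijection is forced by the cut equation. The main technical obstacle is the sub-lemma on~$\SS \otimes \TT'$: carefully extracting, from the Polish expression, the exact form of $\cut(\SS \otimes \TT', \alpha'0\beta)$ as a right-associated product so that the descent structure emerges and the junction step $1\alpha'_{\mm'} \ggg 0\beta_1$ becomes automatic rather than requiring a separate verification.
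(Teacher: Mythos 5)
Your skeleton---structural induction via $\partial(\TT_0\op\TT_1)=\partial\TT_0\otimes\partial\TT_1$, splitting the leaves of $\SS\otimes\TT'$ into terminal ones $\alpha'1$ and internal ones $\alpha'0\beta$---is reasonable, and the terminal case does work: $\cut(\partial\TT,\alpha'1)=\partial\TT_0\otimes\cut(\partial\TT_1,\alpha')\eqLD\TT_0\op\cut(\partial\TT_1,\alpha')$, and distributing $\TT_0$ through the inductive decomposition of $\cut(\partial\TT_1,\alpha')$ yields exactly the descent $1\cdot\phi_1(\alpha')$.

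The internal-leaf case, however, is wrong as stated. Smallest counterexample: $\TT=\xx\op\xx$, so $\partial\TT=\xx\op\xx$ and the internal leaf is $0$ (with $\alpha'=\beta=\emptyset$). Your recipe gives $\phi(0)=(1)\concat(0)=(1,0)$, whose cut-product is $\cut(\TT,1)\op\cut(\TT,0)=(\xx\op\xx)\op\xx$, whereas $\cut(\partial\TT,0)=\xx$; the correct value is $\phi(0)=(0)$. The defect is structural: one has $\cut(\partial\TT,\alpha'0\beta)=A_1\op(\cdots(A_k\op\cut(\partial\TT_0,\beta))\cdots)$, where $k$ is the number of $1$s in $\alpha'$ and $A_\ii=\partial\TT_0\otimes\sub{\partial\TT_1}{\gamma_\ii0}$ runs over the left siblings along the path $\alpha'$ inside $\partial\TT_1$. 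So the head of the product is governed by the geometry of the path $\alpha'$, not by the descent $\phi_1(\alpha')$, whose length is in general unrelated to $k$; the first block of $\phi(\alpha'0\beta)$ may even be empty (giving the descents $0\cdot\phi_0(\beta)$, which your formula never produces), and identifying it requires an extra inductive invariant matching each left-sibling subterm $\sub{\partial\TT_1}{\gamma0}$ with a cut of $\TT_1$ up to $\eqLD$---nothing in your sub-lemma supplies this. Relatedly, the junction $1\alpha'_{\mm'}\ggg0\beta_1$ is \emph{not} automatic: it holds exactly when $\alpha'_{\mm'}$ is not the rightmost (all-$1$s) leaf of $\TT_1$, as you can check against the paper's own list of descents of $\xx^{[3]}$, where $(10,0)$ occurs but $(11,0)$ does not. (Beware also that the displayed definition of $\ggg$ is garbled---as literally written, $\alpha=\gamma1^\pp$ forces $\alpha$ to end in $1$, which contradicts the example $(10,0)$; the examples force the reading $\alpha=\gamma1^\pp0\eta$, $\beta=\gamma0\delta$, and without that correction the leaf/descent counts do not even match, e.g.\ for $\xx^{[4]}$.) Finally, uniqueness needs the separate fact that distinct descents give pairwise non-$\eqLD$ cut-products; this does follow from the comparison and exclusion properties, but it is a claim to be proved, not a one-line appeal to Lemma~\ref{L:Excl2}.
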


So the cuts of~$\partial\TT$, hence in particular its iterated left subterms, are simply defined in terms of those of~$\TT$. In this way, we inductively obtain a description of all normal terms in terms of nested sequences of leaf addresses in~$\xx^{[\nn]}$ and the associated cuts, which are the terms~$\xx^{[\ii]}$ with $\ii < \nn$. In other words, we obtain a unique distinguished expression for every term in terms of~$\xx^{[\ii]}$. Say that a normal term has \emph{degree~$\pp$} if it occurs as a cut of~$\partial^\pp \xx^{[\nn]}$ and $\pp$ is minimal with that property.

\begin{exam}[\bf normal terms] (See Fig.~\ref{F:Normal}.)
Here we describe all normal terms of degree at most~$3$ below~$\xx^{[3]}$. For degree~$0$, there are two proper cuts of~$\xx^{[3]}$, namely~$\xx^{[1]}$ (that is, $\xx$) and~$\xx^{[2]}$, hereafter abridged as~$1$ and~$2$.

For degree~$1$, excluding~$(11)$, there are three descents in~$\xx^{[3]}$, namely $(0)$, $(10)$, and $(10,0)$, leading to three proper cuts of~$\partial\xx^{[3]}$, namely $\xx^{[1]}$, $\xx^{[2]}$, and~$\xx^{[2]}\op\xx^{[1]}$, or $1$, $2$, and $21\opp$ in abridged Polish notation. As $1$ and $2$ already appeared as cuts of~$\xx^{[3]}$, there is only one normal term of degree~$1$, namely~$21\opp$.

For degree~$2$, excluding~$(11)$, there are five descents in~$\partial\xx^{[3]}$, namely $(00)$, $(01)$, $(10)$, $(10,00)$, and $(10, 01)$, leading to five proper cuts of~$\partial^2\xx^{[3]}$, namely, in abridged notation, $1$, $2$, $21\opp$, $21\opp1\opp$, and $21\opp2\opp$. As $1$, $2$, and $21\opp$ already appeared as cuts of~$\partial\xx^{[3]}$, there are two normal terms of degree~$2$, namely~$21\opp1\opp$ and $21\opp2\opp$.

For degree~$3$, excluding~$(11)$, there are eleven descents in~$\partial^2\xx^{[3]}$, namely $(000)$, $(001)$, $(01)$, $(100)$, $(100,000)$, $(100, 001)$, $(100, 01)$, $(101)$,  $(101, 000)$, $(101, 001)$, and $(101, 01)$, leading to eleven proper cuts of~$\partial^3\xx^{[3]}$, namely $1$, $2$, $21\opp$, $21\opp1\opp$, $21\opp1\opp1\opp$, $21\opp1\opp2\opp$
, $21\opp1\opp21\opp\opp$, $21\opp2\opp$, $21\opp2\opp1\opp$, $21\opp2\opp2\opp$, and $21\opp2\opp21\opp\opp$. As $1$, $2$, $21\opp$, $21\opp1\opp$, and $21\opp2\opp$ already appeared as cuts of~$\partial^2\xx^{[3]}$, there are six normal terms of degree~$3$, namely~$21\opp1\opp1\opp$, $21\opp1\opp2\opp$, $21\opp1\opp21\opp\opp$, $21\opp2\opp1\opp$, $21\opp2\opp2\opp$, and $21\opp2\opp21\opp\opp$. 
\end{exam}

\begin{figure}[htb]
\begin{picture}(115,36)(0,-9)
\psset{unit=0.9mm}
\setlength\unitlength{0.9mm}
%der0
\put(0,0){\begin{picture}(15,25)(0,0)
\psline(0,20)(3,25)(6,20)(9,15)(6,20)(3,15)
\put(-1.5,17){\begin{picture}(13,5)
\psframe[linewidth=0.8pt,framearc=.5](-1,-1.2)(3,3.2)
\put(0,0){\small$1$}
\end{picture}}
\put(1.5,12){\begin{picture}(13,5)
\psframe[linewidth=0.8pt,framearc=.5](-1,-1.2)(3.2,3.2)
\put(0,0){\small$2$}
\end{picture}}
\put(7,12){$\pdots$}
\put(2,26.5){$\xx^{[3]}$}
\end{picture}}
%der1
\put(18,0){\begin{picture}(15,25)(0,0)
\psline(0,15)(2,20)(4,15)(2,20)(6,25)(10,20)(8,15)(10,20)(12,15)
\put(6.5,12.2){\rotatebox{-45}{\begin{picture}(13,5)
\psframe[linewidth=0.8pt,framearc=.5](-1,-1.2)(6.5,3.2)
\put(0,0){\small$21\opp$}
\end{picture}}}
\put(11,12){$\pdots$}
\put(-1,12){\small$1$}
\put(3,12){\small$2$}
\put(4,26.5){$\partial\xx^{[3]}$}
\end{picture}}
%der2
\put(37,0){\begin{picture}(28,25)(0,0)
\psline(0,10)(3,15)(6,10)(3,15)(7.5,20)(12,15)(7.5,20)(15,25)(22.5,20)(27,15)(22.5,20)(18,15)(21,10)(18,15)(15,10)
\put(13,8){\rotatebox{-45}{\begin{picture}(20,5)
\psframe[linewidth=0.8pt,framearc=.5](-1,-1.2)(10,3.2)
\put(0,0){\small$21\opp1\opp$}
\end{picture}}}
\put(20,8){\rotatebox{-45}{\begin{picture}(20,5)
\psframe[linewidth=0.8pt,framearc=.5](-1,-1.2)(10,3.2)
\put(0,0){\small$21\opp2\opp$}
\end{picture}}}
\put(25,12){$\pdots$}
\put(-1,7){\small$1$}
\put(5,7){\small$2$}
\put(10,12){\small$21\opp$}
\put(12,26.5){$\partial^2\xx^{[3]}$}
\end{picture}}
%der3
\put(71,0){\begin{picture}(50,25)(0,0)
\psline(0,0)(2,5)(4,0)(2,5)(5,10)(8,5)(5,10)(9,15)(13,10)(9,15)(17,20)(25,15)(29,10)(25,15)(21,10)(24,5)(21,10)(18,5)(16,0)(18,5)(20,0)
\put(9,7){\small$21\opp1\opp$}
\put(25,7){\small$21\opp2\opp$}
\psline(17,20)(33,25)(49,20)(57,15)(49,20)(41,15)(45,10)(41,15)(37,10)(40,5)(37,10)(34,5)
\put(13.5,-2){\rotatebox{-45}{\begin{picture}(20,5)
\psframe[linewidth=0.8pt,framearc=.5](-1,-1.2)(13.5,3.2)
\put(0,0){\small$21\opp1\opp1\opp$}
\end{picture}}}

\put(19.5,-2){\rotatebox{-45}{\begin{picture}(20,5)
\psframe[linewidth=0.8pt,framearc=.5](-1,-1.2)(13.5,3.2)
\put(0,0){\small$21\opp1\opp2\opp$}
\end{picture}}}

\put(23,3){\rotatebox{-45}{\begin{picture}(20,5)
\psframe[linewidth=0.8pt,framearc=.5](-1,-1.2)(17,3.2)
\put(0,0){\small$21\opp1\opp21\opp\opp$}
\end{picture}}}
\put(32,3){\rotatebox{-45}{\begin{picture}(20,5)
\psframe[linewidth=0.8pt,framearc=.5](-1,-1.2)(13.5,3.2)
\put(0,0){\small$21\opp2\opp1\opp$}
\end{picture}}}
\put(39.5,3){\rotatebox{-45}{\begin{picture}(20,5)
\psframe[linewidth=0.8pt,framearc=.5](-1,-1.2)(13.5,3.2)
\put(0,0){\small$21\opp2\opp2\opp$}
\end{picture}}}
\put(44.5,8){\rotatebox{-45}{\begin{picture}(20,5)
\psframe[linewidth=0.8pt,framearc=.5](-1,-1.2)(17,3.2)
\put(0,0){\small$21\opp2\opp21\opp\opp$}
\end{picture}}}
\put(55,12){$\pdots$}
\put(-1,-3){\small$1$}
\put(3,-3){\small$2$}
\put(6,2){\small$21\opp$}
\put(30,26.5){$\partial^3\xx^{[3]}$}
\end{picture}}
\end{picture}
\caption[]{\sf The terms~$\partial^\pp\xx^{[3]}$ for $0 \le \pp \le 3$, and the canonical decompositions of their cuts: for each degree, the cuts that are normal, that is, that do not appear in a lower degree, are framed.}
\label{F:Normal}
\end{figure}
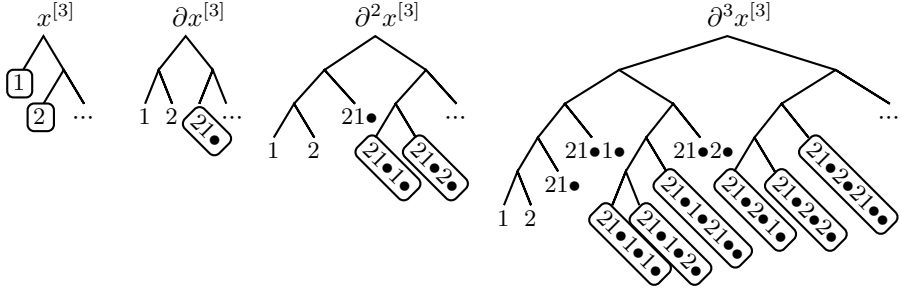

It follows from the proof of Prop.~\ref{P:Normal} (or rather of its counterpart stated in terms of cuts rather than in terms of iterated left subterms) that, for every~$\TT$ in~$\TERM\op\xx$, one can effectively find an LD-expansion of~$\TT$ that is a cut of some term~$\partial^\pp\xx^{[\nn]}$ and, from there, determine the unique normal term that is $\LD$-equivalent to~$\TT$. Therefore, one obtains in this way a new solution for the word problem of~$\eqLD$.

\begin{exam}[\bf normal form solution]\label{X:NFSolution}
Consider $\TT:= (\xx \op \xx) \op (\xx \op (\xx \op \xx))$ and $\TT' := \xx \op ((\xx \op \xx) \op (\xx \op \xx))$ once more. The normal form of~$\TT$ is found by looking for a term~$\partial^\pp\xx^{[\nn]}$ and an address~$\alpha$ such that $\cut(\partial^\pp\xx^{[\nn]})$ is an $\LD$-expansion of~$\TT$: here one easily sees that $\cut(\partial\xx^{[5]}, 011)$ is convenient both for~$\TT$ and for~$\TT'$, and one concludes that the normal form of both~$\TT$ and~$\TT'$ is the normal term~$\xx^{[4]}$.
\end{exam}

%%%%
\subsection{The case of more than one variable}\label{SS:TwoGener}

So far the word problem for~$\LD$ was solved only in the case of terms involving one variable. Extending the solutions to the case of terms involving any number of variables is easy: essentially, the free left-shelf~$\Free_\nn$ with $\nn \ge 2$ is a lexicographic extension of~$\Free_1$. Here is the key technical point:

\begin{lemm}\label{L:Several}
Two terms $\TT, \TT'$ such that, for some~$\alpha$ and~$\alpha'$, the cuts $\cut(\TT, \alpha)$ and~$\cut(\TT', \alpha')$ coincide except in their rightmost variables, are never $\LD$-equivalent.
\end{lemm}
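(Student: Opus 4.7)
The plan is to argue by contradiction: I assume $T \eqLD T'$ and aim to derive $x_i = x_j$, which contradicts the hypothesis.

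First I would record the basic invariant underlying the whole argument: the \emph{rightmost variable} of a term (the label at the unique leaf of the form~$1^k$) is preserved by each LD-rewrite $A \op (B \op C) \leftrightarrow (A \op B) \op (A \op C)$, and hence by~$\eqLD$. A direct inspection of the LD rule suffices, since both sides carry the subterm~$C$ at their deepest-right position and the rewrite leaves that position untouched. By the very definition of the cut, the rightmost variable of~$\cut(T,\alpha) = C[x_i]$ is~$T_\alpha = x_i$, while that of~$\cut(T',\alpha') = C[x_j]$ is~$x_j$.

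Next I would invoke confluence (Lemma~\ref{L:Confl}) applied to $T \eqLD T'$ to choose a common LD-expansion~$T^*$ of both terms. The lemma stated just before Prop.~\ref{P:Normal} describes cuts of~$\partial U$ as iterated $\op$-combinations of cuts of~$U$ via a bijection between leaves of~$\partial U$ and descents of~$U$; specialized at a singleton descent $(\gamma)$ it says $\cut(\partial U, \phi((\gamma))) \eqLD \cut(U, \gamma)$. Iterating this along the chain of $\partial$-steps connecting $T$ (resp.\ $T'$) to $T^*$, I produce two distinguished leaves $\tilde\alpha, \tilde\alpha'$ of~$T^*$ with $\cut(T^*, \tilde\alpha) \eqLD C[x_i]$ and $\cut(T^*, \tilde\alpha') \eqLD C[x_j]$. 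Applying rightmost-variable invariance to these two LD-equivalences yields $T^*_{\tilde\alpha} = x_i$ and $T^*_{\tilde\alpha'} = x_j$, so it will suffice to prove $\tilde\alpha = \tilde\alpha'$ to conclude $x_i = x_j$, closing the contradiction.

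Establishing $\tilde\alpha = \tilde\alpha'$ is where I expect the real work. The observation I would try to exploit is that, within any single term~$U$, the template of~$\cut(U, \gamma)$ is an injective function of the leaf~$\gamma$: the decomposition $\gamma = 0^{r_0} 1 0^{r_1} 1 \cdots 1 0^{r_m}$ and the associated hanging-subterm addresses $\gamma_k$ are rigid inside a finite tree, so two distinct leaves cannot produce identical hanging-subterm sequences. Provided the iterated singleton-descent construction carries templates forward in a manner that depends only on~$C$ and not on the filling variable, the common template shared by the two starting cuts pins down a single inherited leaf in~$T^*$, and $\tilde\alpha = \tilde\alpha'$ follows.

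The main obstacle will be precisely to verify this template-compatibility of the singleton-descent branch. Concretely, I would proceed by induction on the number of $\partial$-steps separating $T^*$ from $T$ (and from~$T'$), checking at each stage that the branch selected to track~$C[x_i]$ coincides with the one selected to track~$C[x_j]$. This reduces to an unpacking of the inductive definitions~\eqref{E:Der}--\eqref{E:Otimes} of~$\partial$ and of the bijection~$\phi$, confirming that the branch choice is determined by the tree shape and the non-rightmost labels of the cut only, and is indifferent to which variable occupies the rightmost leaf.
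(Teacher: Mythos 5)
Your opening observation (the rightmost variable is an $\eqLD$-invariant) and your use of confluence to pass to a common LD-expansion match the paper's proof. But the heart of your argument --- establishing $\tilde\alpha = \tilde\alpha'$ --- has a genuine gap, and the justification you sketch cannot close it. The injectivity you invoke is injectivity of the \emph{literal} map $\gamma \mapsto \cut(U,\gamma)$ inside a single term $U$; however, the two cuts you track into $\TT^*$ are only \emph{LD-equivalent} to $C[\xx_\ii]$ and $C[\xx_\jj]$, having been carried through two entirely different expansion chains starting from two different terms (and, for $\TT'$, there is in general no chain of $\partial$-steps reaching $\TT^*$ at all, only an arbitrary LD-expansion chain). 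So the two tracked cuts of $\TT^*$ need not share any literal ``template,'' and literal injectivity gives you nothing. What you would actually need is the statement that \emph{distinct leaves of a single term never have LD-equivalent cuts} --- a strengthening of the exclusion property from iterated left subterms to arbitrary cuts --- and that is a result of essentially the same depth as the lemma you are proving; your proposal never invokes the exclusion property, which is the real engine here. Worse, the direction is against the grain: under the contradiction hypothesis the two tracked positions are \emph{forced to be distinct} precisely because their rightmost variables differ, so the coincidence $\tilde\alpha = \tilde\alpha'$ is essentially equivalent to the desired conclusion $\xx_\ii = \xx_\jj$ rather than a stepping stone towards it.

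The paper's proof exploits the mismatch instead of trying to rule it out. It tracks the two cuts to \emph{iterated left subterms} $\LS^{\rr}(\TT'')$ and $\LS^{\rr'}(\TT'')$ of the common expansion; since their rightmost variables are $\xx_\ii \ne \xx_\jj$, necessarily $\rr \ne \rr'$, say $\rr < \rr'$. Substituting $\xx_\jj$ for the rightmost variable of $\LS^{\rr}(\TT'')$ yields a term $\TT'''$ that is simultaneously an LD-expansion of $\cut(\TT',\alpha')$ (since the substituted source is exactly $\cut(\TT',\alpha')$ by hypothesis) and has $\LS^{\rr'}(\TT'')$ as a proper iterated left subterm; hence $\cut(\TT',\alpha') \eqLD \TT'''$ and $\cut(\TT',\alpha') \divsLD \TT'''$ hold together, contradicting the exclusion property (Lemma~\ref{L:Excl1}). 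If you want to salvage your route, you must either import that exclusion argument to prove the uniqueness of a leaf with a prescribed LD-class of cut, or switch to the substitution trick; as written, the ``template-compatibility'' induction does not supply the missing rigidity.
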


\begin{proof}
Assume for contradiction $\TT \eqLD \TT'$. Let~$\xx_\ii$ be the rightmost variable in~$\cut(\TT, \alpha)$, and, respectively, let~$\xx_\jj$ be that of~$\cut(\TT', \alpha')$. By the confluence property (Lemma~\ref{L:Confl}), $\TT$ and~$\TT'$ admit a common LD-expansion~$\TT''$. Extending the proof of Lemma~\ref{L:Comp2}, one easily checks that $\TT''$ can be chosen so that some iterated left subterm~$\LS^{\rr}(\TT'')$, is an LD-expansion of~$\cut(\TT, \alpha)$ and some iterated left subterm~$\LS^{\rr'}(\TT'')$ is an LD-expansion of~$\cut(\TT', \alpha')$. By construction, the rightmost variable in~$\LS^\rr(\TT'')$ is~$\xx_\ii$, whereas that of~$\LS^{\rr'}(\TT'')$ is~$\xx_\jj$ with $\jj \not= i$. Hence, we must have $\rr \not= \rr'$, say for instance $\rr < \rr'$. Let~$\TT'''$ be the term obtained from~$\LS^\rr(\TT'')$ by replacing the rightmost variable~$\xx_\ii$ with~$\xx_\jj$. On the one hand, $\LS^{\rr'}(\TT'')$ is a proper iterated left subterm of~$\LS^\rr(\TT'')$, and also of~$\TT'''$, hence $\cut(\TT', \alpha') \divsLD \TT'''$ holds. On the other hand, by construction again, $\TT'''$ is an LD-expansion of the term obtained from~$\cut(\TT, \alpha)$ by replacing the final occurrence of~$\xx_\ii$ with~$\xx_\jj$, which, by assumption, is~$\cut(\TT', \alpha')$, hence $\cut(\TT', \alpha') \eqLD \TT'''$ holds. The conjunction of the above relations contradicts the exclusion property (Lemma~\ref{L:Excl1}).
\end{proof} 

For~$\TT$ in~$\TERM\op\XX$, we denote by~$\overline\TT$ the projection of~$\TT$ where every variable of~$\XX$ is mapped to~$\xx$. Clearly, $\TT \eqLD \TT'$ implies $\overline\TT \eqLD \overline{\TT'}$. The converse implication need not be true, but we have

\begin{lemm}
Assume that $\TT$ and~$\TT'$ are terms in~$\TERM\op\XX$ satisfying $\overline\TT = \overline{\TT'}$. Then $\TT \eqLD \TT'$ holds if, and only if, $\TT$ and~$\TT'$ coincide.
\end{lemm}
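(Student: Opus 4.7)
The ``if'' direction is immediate from reflexivity of~$\eqLD$, so my plan is to handle the ``only if'' direction by contradiction, assuming $\TT \eqLD \TT'$, $\overline\TT = \overline{\TT'}$, and $\TT \not= \TT'$. I will aim to produce a pair of cuts of~$\TT$ and~$\TT'$ that coincide except in their rightmost variables, and then appeal to Lemma~\ref{L:Several}. The first step is to note that the projection $\overline{\cdot}$ only relabels leaves, so $\overline\TT = \overline{\TT'}$ forces $\TT$ and $\TT'$ to share the same skeleton; consequently, if $\TT \not= \TT'$ there must exist at least one leaf address at which the two terms carry different variables.

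Next, I would choose $\alpha$ to be the \emph{leftmost} such address in Polish order, equivalently the first variable-position of~$\Pol\TT$ whose letter differs from that of~$\Pol{\TT'}$. Using the formula recalled in the excerpt,
\[
\cut(\TT, \alpha) = \sub\TT{\alpha_0} \op (\sub\TT{\alpha_1} \op \cdots (\sub\TT{\alpha_{\mm - 1}} \op \sub\TT\alpha) \cdots),
\]
with $\alpha = 0^{\rr_0} 1 \cdots 1 0^{\rr_\mm}$ and $\alpha_\kk = 0^{\rr_0} 1 \cdots 1 0^{\rr_\kk + 1}$ for $\kk < \mm$, and the analogous formula for~$\TT'$, my plan is to check that the two cuts have the same skeleton (which is automatic from the common skeleton of~$\TT$ and~$\TT'$) and that they carry identical variables at every leaf except the rightmost one. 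The rightmost leaf is~$\alpha$ itself, where the variables of~$\TT$ and~$\TT'$ differ by construction; every other leaf lies inside some $\sub\TT{\alpha_\kk}$ with $\kk < \mm$.

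The main obstacle---and really the only nontrivial ingredient---is to verify that each leaf of $\sub\TT{\alpha_\kk}$ strictly precedes $\alpha$ in Polish order, so that by the minimal choice of~$\alpha$ all such leaves carry the same variable in~$\TT$ and in~$\TT'$ and, combined with the matching skeletons, $\sub\TT{\alpha_\kk} = \sub{\TT'}{\alpha_\kk}$ for every $\kk < \mm$. For this, a direct positional comparison shows that $\alpha_\kk$ and $\alpha$ first differ at position $\rr_0 + \cdots + \rr_\kk + \kk + 1$, where $\alpha_\kk$ has a~$0$ and $\alpha$ has a~$1$; since this digit pattern is inherited by any extension $\alpha_\kk \cdot \gamma$, every leaf of $\sub\TT{\alpha_\kk}$ is indeed to the left of~$\alpha$. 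Lemma~\ref{L:Several} then yields $\TT \not\eqLD \TT'$, contradicting the hypothesis and completing the argument.
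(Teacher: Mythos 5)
Your proof is correct and follows the same route as the paper: exhibit a pair of cuts of $\TT$ and $\TT'$ that agree everywhere except in their rightmost variables and invoke Lemma~\ref{L:Several}. The only difference is that you make explicit the choice of address (the leftmost leaf, in Polish order, where the variables disagree) and verify the positional comparison, whereas the paper simply asserts that suitable addresses $\alpha,\alpha'$ exist.
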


\begin{proof}
If $\TT$ and~$\TT'$ do not coincide, there must exist addresses~$\alpha, \alpha'$ as in Lemma~\ref{L:Several}, and the latter then implies that $\TT$ and $\TT'$ are not $\LD$-equivalent.
\end{proof}

We deduce an solution for the word problem that directly extends Algorithm~\ref{A:Syntact}:

\begin{algo}[\bf syntactic solution, general case]\label{A:SyntactBis}

\rm\hfill\par

\noindent {\bf Input}: Two terms $\TT$, $\TT'$ in~$\TERM\op\XX$

\noindent {\bf Output}: $\True$ if $\TT$ and $\TT'$ are $\LD$-equivalent, $\False$ otherwise

\lab{1} $\Found := \False$

\lab{2} $\nn := 0$

\lab{3} {\bf while not} $\Found$ {\bf do}

\lab{4} \quad $\TT_1:= \EXPAND(\TT, \ss_\nn)$

\lab{5} \quad $\TT'_1:= \EXPAND(\TT', \ss'_\nn)$

\lab{6} \quad {\bf if} $\TT_1 = \TT'_1$ {\bf then}

\lab{7} \quad\quad {\bf return} $\True$

\lab{8} \quad\quad $\Found := \True$

\lab{9} \quad {\bf else if} $\overline{\TT_1} = \overline{\TT'_1}$ {\bf or} $\overline{\TT_1} \divs \overline{\TT'_1}$ {\bf or} $\overline{\TT_1} \mults \overline{\TT'_1}$ {\bf then}

\lab{10} \quad\quad {\bf return} $\False$

\lab{11} \quad\quad $\Found := \True$

\lab{12} \quad $\nn:= \nn + 1$

\end{algo}

Extending the semantic algorithm of Section~\ref{SS:Semantic} is also possible. No realization of~$\Free_\nn$ with $\nn \ge 2$ inside Artin's braid group~$B_\infty$ is known (but there is no proof that such a realization cannot exist). However, one can easily define variants of~$B_\infty$ with enough space to build such realizations. A first solution was described by D.\,Larue in~\cite{Lrb}. Another one with a simple geometric interpretation (``charged braids'') appears in~\cite{Dfn}.

%%%%
\subsection{The Polish algorithm}\label{SS:Polish}

We conclude with one more approach to the word problem of~$\LD$, which leads to a very puzzling open question.

As in Section~\ref{SS:NF}, let us consider the Polish expression of terms. By definition, expanding~$\TT$ to~$\TT'$ using the LD-law means replacing some subterm of~$\TT$ of the form $\TT_1 \op (\TT_2 \op \TT_3)$ with $(\TT_1 \op \TT_2) \op (\TT_1 \op \TT_3)$. On Polish expressions, this means that $\Pol{\TT'}$ is obtained from~$\Pol\TT$ by replacing a factor of~$\Pol\TT$ of the form
$$\Pol{\TT_1}\Pol{\TT_2}\Pol{\TT_3}\opp\opp \text{\qquad with \qquad}
\Pol{\TT_1}\Pol{\TT_2}\opp \Pol{\TT_1} \Pol{\TT_3}\opp\opp:$$
starting from the left, the words~$\Pol\TT$ and~$\Pol{\TT'}$ coincide up to the last letter from~$\Pol{\TT_2}$, which is followed by~$\opp$ in~$\Pol{\TT'}$, whereas, in~$\Pol\TT$, it is followed by the first letter of~$\Pol{\TT_3}$, which is necessarily a variable. This suggests, for comparing two terms~$\TT, \TT'$, to look at the first discrepancy between~$\Pol\TT$ and~$\Pol{\TT'}$, try to expand the term where a variable occurs, and repeat until one possibly obtains twice the same word. This is what we shall call the ``\emph{Polish algorithm}''. To give a precise description, let us review all possible relations between~$\Pol\TT$ and~$\Pol{\TT'}$:

- If $\Pol\TT$ and $\Pol{\TT'}$ coincide, then $\TT$ and~$\TT'$ are equal, hence $\LD$-equivalent.

- If $\Pol\TT$ is a proper prefix of~$\Pol{\TT'}$, or vice versa, then $\TT \divs \TT'$ or $\TT' \divs \TT$ holds, hence, by the exclusion property (Lemma~\ref{L:Excl1}), $\TT$ and~$\TT'$ are not $\LD$-equivalent.

- Otherwise, $\Pol\TT$ and $\Pol{\TT'}$ have a first letter clash. If this clash is of the type ``some~$\xx_\ii$ \vs some~$\xx_\jj$ with~$\ii \not= \jj$'', then, by Lemma~\ref{L:Several}, $\TT$ and~$\TT'$ are not $\LD$-equivalent.

- The remaining case is a first letter clash of the type ``some~$\xx_\ii$ \vs $\opp$''. Then we shall see that there is a unique solution to push the clash further to the right, by expanding the term where~$\xx_\ii$ occurs. Consider a term of the form $\TT_1 \op ((\TT_2 \op \TT_3) \op \TT_4)$: the Polish expression is $\Pol{\TT_1} \Pol{\TT_2} \Pol{\TT_3} \opp \Pol{\TT_4} \opp \opp$. Because $\TT_2$ is nested in~$\TT_2 \op \TT_3$, in order to insert a letter~$\opp$ after~$\Pol{\TT_2}$, we need to first distribute~$\TT_1$ to~$\TT_2 \op \TT_3$, and then distribute~$\TT_1$ to~$\TT_2$, which amounts to successively applying~$\LDop\emptyset$ and~$\LDop0$, obtaining $\Pol{\TT_1} \Pol{\TT_2} \opp \Pol{\TT_1} \Pol{\TT_3} \opp\opp \Pol{\TT_1} \Pol{\TT_4} \opp \opp$, which has the expected form with a~$\opp$ after~$\Pol{\TT_2}$. The situation is generic: 

\begin{defi}[\bf solution]
Let $\alpha$ be an address that contains the factor~$10$. Write~$\alpha$ as~$\beta10^\pp1^\rr$ with $\pp \ge 1$ and $\rr \ge 0$. Then we define the \emph{solution} at~$\alpha$ to be the finite sequence $\SOL(\alpha) := (\beta, \beta0, \beta00 \wdots \beta0^\pp)$. 
\end{defi}

Then $\SOL(\alpha)$ provides a recipe for replacing a variable with the symbol~$\opp$ at a prescribed position in the Polish expansion of a term: 

\begin{lemm}[\bf solution]
The sequence $\SOL(\alpha)$ is the unique finite sequence~$\ss$ with the following property. Assume that $\TT$ is a term in which there is a letter with address~$\alpha$ in~$\Pol{\TT}$ and the next letter in~$\Pol{\TT}$ is a variable. Then the LD-expansion $\TT':= \TT \act \LDop\ss$ is defined,$\Pol{\TT'}$ coincides with~$\Pol\TT$ up to the letter with address~$\alpha$, and the next letter in~$\Pol{\TT'}$ is~$\opp$.
\end{lemm}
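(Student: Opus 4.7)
My plan is to proceed by induction on~$\pp$, handling existence and uniqueness in parallel on the back of a single local observation about LD-expansions. The observation is this: if $\sub\TT\gamma$ has the form $a \op (b \op c)$, then $\LDop\gamma$ rewrites the factor $\Pol a \concat \Pol b \concat \Pol c \concat \opp \concat \opp$ of~$\Pol\TT$ into $\Pol a \concat \Pol b \concat \opp \concat \Pol a \concat \Pol c \concat \opp \concat \opp$, and leaves the rest of~$\Pol\TT$ untouched. Hence $\Pol{\TT \act \LDop\gamma}$ agrees with~$\Pol\TT$ through the last letter of~$\Pol b$---the root of~$b$, sitting at address~$\gamma 10$---and the position that immediately follows switches from the first letter of~$\Pol c$ (a variable) to the symbol~$\opp$. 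I would also note beforehand that the hypothesis ``the next letter is a variable'' forces~$\alpha$ to end in~$0$: a direct case check on the Polish traversal order shows that if $\alpha$ ended in~$1$, the next letter in~$\Pol\TT$ would be an~$\opp$ coming from the root of the parent subtree.

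For existence, the base case $\pp = 1$ is immediate from the local observation with $\gamma = \beta$, because the very existence of address~$\alpha = \beta 1 0$ in~$\TT$ ensures that $\sub\TT\beta = a \op (b \op c)$ with $b = \sub\TT\alpha$. For $\pp \ge 2$, I would first apply $\LDop\beta$; in the resulting term~$\TT_1$, the node formerly at address~$\beta 1 0^\pp$ now sits at address $(\beta 0) 1 0^{\pp - 1}$, while $\Pol{\TT_1}$ still coincides with~$\Pol\TT$ through the letter at the old address~$\alpha$, since this letter lies inside the preserved initial portion $\Pol a \concat \Pol b$. Invoking the induction hypothesis on~$\TT_1$ with the shorter address $(\beta 0) 1 0^{\pp - 1}$ supplies a sequence that, concatenated with~$\LDop\beta$, is exactly~$\SOL(\alpha)$.

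For uniqueness, I would exploit the fact that every entry of any valid~$\ss'$ must be applicable to \emph{every} term~$\TT$ satisfying the hypothesis---a universal applicability requirement. The subterm structure forced by the hypothesis is rather restrictive: only finitely many addresses (essentially the ancestors of~$\alpha$ and their siblings) are guaranteed to correspond to internal nodes, so only finitely many addresses~$\gamma$ make $\sub\TT\gamma = a \op (b \op c)$ universally. A direct inspection of the remaining candidates rules out all but~$\LDop\beta$: any other choice either fails universal applicability, or produces a Polish modification at a position incompatible with the required agreement through~$\alpha$, or leaves the position immediately after~$\alpha$ untouched. Induction on the length of~$\ss'$, applied to~$\TT_1$ with the shorter address $(\beta 0) 1 0^{\pp - 1}$, then identifies every remaining operator with that of~$\SOL(\alpha)$ and forces $\ss' = \SOL(\alpha)$.

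The main obstacle is the uniqueness half, more precisely the need to combine two constraints---universal applicability and compatibility with the prescribed Polish shape---to pin down the first operator unambiguously, and to rule out detours in which $\ss'$ modifies portions of the tree far from~$\alpha$ before returning to the relevant region. Once the local observation and the preliminary case analysis on~$\alpha$ are secured, the existence half is routine; uniqueness requires meticulous tracking of how the target address and the set of ``forced'' internal nodes migrate under each successive expansion.
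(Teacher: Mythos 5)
The paper states this lemma without proof, so your argument has to stand on its own. The existence half is correct and is the natural argument: the local description of how $\LDop\gamma$ rewrites the factor $\Pol{a}\concat\Pol{b}\concat\Pol{c}\concat\opp\concat\opp$, the preliminary reduction to ``$\alpha$ ends in $0$'' (so that $\rr=0$ in the decomposition), and the induction on~$\pp$ that first applies $\LDop\beta$ and then treats the shorter address $(\beta0)10^{\pp-1}$ in the resulting term, all check out (you are implicitly using the convention, consistent with the paper's examples, that $\SOL(\alpha)$ ends at $\beta0^{\pp-1}$ rather than $\beta0^{\pp}$ as the displayed definition literally says).

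The uniqueness half has a genuine gap, and indeed the property as literally stated does not single out $\SOL(\alpha)$. Take $\alpha=10$, so the eligible terms are exactly those of the form $\TT=A\op(B\op C)$ and $\SOL(\alpha)=(\emptyset)$. The sequence $(\emptyset,\emptyset)$ also has the stated property for every such~$\TT$: it is always defined, since after one expansion the term $(A\op B)\op(A\op C)$ is again of the form $X\op(Y\op Z)$, and the result $((A\op B)\op A)\op((A\op B)\op C)$ has Polish expression $\Pol{A}\Pol{B}\opp\Pol{A}\opp\Pol{A}\Pol{B}\opp\Pol{C}\opp\opp$, which agrees with $\Pol{\TT}=\Pol{A}\Pol{B}\Pol{C}\opp\opp$ through the letter at address~$10$ and is followed by~$\opp$. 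So uniqueness can only hold among minimal (or irredundant) sequences, and your argument never confronts this: in your trichotomy for excluding candidate operators, ``leaves the position immediately after~$\alpha$ untouched'' is not a disqualification, since a sequence may keep expanding after the goal is reached (as the example shows) or fix that position later. Two further points would need attention even under a minimality reading. First, the ``direct inspection'' pinning the first operator to $\LDop\beta$ is asserted rather than performed: applicability to the minimal term only forces $\gamma_1 1$ to be a prefix of~$\alpha$, which leaves all the candidates with $\gamma_1 1$ a prefix of~$\beta$; when $\alpha$ lies in the left subtree of $\gamma_1 1$ such a move preserves both the letter at~$\alpha$ and its successor, so it cannot be excluded on the grounds you list. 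Second, your inductive step for uniqueness applies the induction hypothesis to the class of terms $\TT\act\LDop\beta$, which is a proper subclass of the terms satisfying the hypothesis at the new address (its members carry two identical copies of the old left subterm), so the induction hypothesis does not directly force the tail of~$\ss'$ to be $\SOL((\beta0)10^{\pp-1})$. The existence half can be kept as is; the uniqueness half needs to be restated (unique shortest, or unique with each step applied at the current clash) and then proved against that sharper formulation.
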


The method can then be implemented as follows. For any two terms~$\TT, \TT'$, we define~$\DISC(\TT, \TT')$ to be, when it exists, the pair~$(\pp, \ii)$ in~$\NNNN \times \{1,2\}$ such that $\pp$ is the length of the longest common initial factors between~$\Pol\TT$ and~$\Pol{\TT'}$ and the $(\pp+1)$st letter in~$\TT$ is a variable, whereas the $(\pp+1)$st letter in~$\TT'$ is~$\opp$, in which case we put~$\ii:= 1$, or vice versa, in which case we put~$\ii:= 2$. We recall that $\add(\pp, \TT)$ is the address in~$\TT$ that corresponds to the $\pp$th letter in~$\Pol\TT$ under the correspondence described below Definition~\ref{D:Polish}.

\enlargethispage{5mm}

\begin{algo}[\bf Polish algorithm]\label{A:Polish}

\rm\hfill\par

\noindent {\bf Input}: Two terms $\TT$, $\TT'$ in~$\TERM\op\XX$

\noindent {\bf Output}: $\True$ if $\TT$ and $\TT'$ are $\LD$-equivalent, $\False$ otherwise

\lab{1} {\bf while} $\DISC(\TT, \TT')$ is defined {\bf do}

\lab{2} \quad $(\pp, \ii):= \DISC(\TT, \TT')$

\lab{3} \quad {\bf if} $\ii = 1$ {\bf then}

\lab{4} \quad\quad $\alpha:= \add(\pp, \TT)$

\lab{5} \quad\quad $\TT:= \EXPAND(\TT, \SOL(\alpha))$

\lab{3} \quad {\bf if} $\ii = 2$ {\bf then}

\lab{4} \quad\quad $\alpha:= \add(\pp, \TT')$

\lab{5} \quad\quad $\TT':= \EXPAND(\TT', \SOL(\alpha))$

\lab{6} {\bf if} $\TT = \TT'$ {\bf then}

\lab{7} \quad {\bf return} $\True$

\lab{8} {\bf if} $\TT \divs \TT'$ {\bf or} $\TT' \divs \TT$ {\bf then}

\lab{9} \quad {\bf return} $\False$

\end{algo}

\begin{exam}[\bf Polish algorithm]\label{X:Polish}
Consider $\TT:= (\xx_1 \op \xx_2) \op (\xx_1 \op (\xx_3 \op \xx_4))$ and $\TT' := \xx_1 \op ((\xx_2 \op \xx_3) \op (\xx_2 \op \xx_4))$, a multi-variable version of the terms of Examples~\ref{X:Syntactic} and~\ref{X:Semantic}. Then we start with

\smallskip $\Pol{\TT_0} = \xx_1\xx_2 \,\Vert\, \opp \xx_1\xx_3\xx_4\opp \opp \opp$,

$\Pol{\TT'_0} = \xx_1\xx_2 \,\Vert\, \xx_3\opp \xx_2\xx_4\opp \opp \opp$

\noindent $($we use the symbol~$\Vert$ to emphasize the longest common prefix$)$.

\smallskip\noindent The words~$\Pol{\TT_0}$ and $\Pol{\TT'_0}$ have a clash after the second letter, followed by~$\opp$ in~$\Pol{\TT_0}$ and by~$\xx_3$ in~$\Pol{\TT'_0}$. Thus $\DISC(\TT_0, \TT'_0)$ is defined, and equal to~$(2, 2)$. We then find $\add(2, \TT'_0) = 100$, $\SOL(100) = (\emptyset, 0)$, and LD-expanding~$\TT'_0$ at~$\emptyset$ and then at~$0$ yields

\smallskip $\Pol{\TT_1} = \xx_1\xx_2\opp \xx_1\xx_3 \,\Vert\, \xx_4 \opp \opp \opp$,
 
$\Pol{\TT'_1} = \xx_1\xx_2\opp \xx_1\xx_3 \,\Vert\, \opp \opp \xx_1\xx_2\xx_4\opp \opp \opp$.
 
\smallskip\noindent Now $\Pol{\TT_1}$ and $\Pol{\TT'_1}$ have a clash after the fifth letter, followed by~$\xx_3$ in~$\Pol{\TT_1}$ and by~$\opp$ in~$\Pol{\TT'_1}$. Thus $\DISC(\TT_1, \TT'_1)$ is defined, and equal to~$(5, 1)$. We find $\add(5, \TT_1) = 110$, $\SOL(110) = (1)$, and expanding~$\TT_1$ at~$1$ yields

\smallskip $\Pol{\TT_2} = \xx_1\xx_2\opp \xx_1\xx_3\opp \,\Vert\, \xx_1\xx_4 \opp \opp \opp$,
 
$\Pol{\TT'_2} = \xx_1\xx_2\opp \xx_1\xx_3\opp \,\Vert\, \opp \xx_1\xx_2\xx_4\opp \opp \opp$.

\smallskip\noindent Then $\Pol{\TT_2}$ and $\Pol{\TT'_2}$ have a clash after the sixth letter, followed by~$\xx_1$ in~$\Pol{\TT_2}$ and by~$\opp$ in~$\Pol{\TT'_2}$. Thus $\DISC(\TT_2, \TT'_2)$ is defined, and equal to~$(6, 1)$. We find $\add(6, \TT_2) = 10$, $\SOL(10) = (\emptyset)$, and expanding~$\TT_2$ at~$\emptyset$ yields

\smallskip
$\Pol{\TT_3} = \xx_1\xx_2\opp \xx_1\xx_3\opp \opp \xx_1\xx_2\,\Vert\,\opp \xx_1\xx_4\opp \opp \opp$,

$\Pol{\TT'_3} = \xx_1\xx_2\opp \xx_1\xx_3\opp \opp \xx_1\xx_2\,\Vert\, \xx_4\opp \opp \opp$.

\smallskip\noindent The words~$\Pol{\TT_3}$ and $\Pol{\TT'_3}$ have a clash after the ninth letter, followed by~$\opp$ in~$\Pol{\TT_3}$ and by~$\xx_4$ in~$\Pol{\TT'_3}$. Thus $\DISC(\TT_3, \TT'_3)$ is defined, and equal to~$(10, 2)$. We find $\add(10, \TT'_3) =\nobreak 110$, $\SOL(110) = (1)$, and expanding~$\TT'_3$ at~$1$ yields

\smallskip
$\Pol{\TT_4} = \xx_1\xx_2\opp \xx_1\xx_3\opp \opp \xx_1\xx_2\opp \xx_1\xx_4\opp \opp \opp$,

$\Pol{\TT'_4} = \xx_1\xx_2\opp \xx_1\xx_3\opp \opp \xx_1\xx_2\opp \xx_1\xx_4\opp \opp \opp$.

\smallskip\noindent The words~$\Pol{\TT_4}$ and $\Pol{\TT'_4}$ are equal: the Polish Algorithm running on the pair~$(\TT, \TT')$ converges to~$(\TT_4, \TT_4)$ in 4 steps. We conclude that $\TT$ and~$\TT'$ are $\LD$-equivalent. 
\end{exam}

As in the case of Algorithms~\ref{A:Syntact} and~\ref{A:SyntactBis}, the correctness of the Polish algorithm follows from the exclusion property (Lemma~\ref{L:Excl1}): if the algorithm converges to a pair of equal terms~$(\TT_\nn, \TT'_\nn)$, the term~$\TT_\nn$ is a common LD-expansion of the initial terms, and the latter are $\LD$-equivalent; if the algorithm converges to a pair of terms~$(\TT_\nn, \TT'_\nn)$ with $\TT_\nn \divs \TT'_\nn$ or $\TT'_\nn \divs \TT_\nn$, then, by the exclusion property, $\TT_\nn$ and $\TT'_\nn$ cannot be $\LD$-equivalent and, therefore, the initial terms cannot either. 

But this leaves the following question open:

\begin{ques}[\bf convergence]
Does the Polish algorithm always converge?
\end{ques}

The problem seems very difficult---and, with the Embedding Conjecture (Question~\ref{Q:Emb}), it is the most puzzling open problem involving syntactic aspects of selfdistributivity. Experimental results and a number of partial results~\cite{Dfl, Dgf} suggest a positive answer, but no complete result is known so far. Due to the nature of selfdistributivity, the sizes of LD-expansions quickly increase (see Fig.~\ref{F:Normal}). However, many patterns are repeated in such LD-expansions, and clever encodings can lower the size, on the shape of S.\,Schleimer's approach in~\cite{Schleimer}. By doing so, O.\,Deiser~\cite{Dei} could perform exhaustive search for all pairs of terms up to size~$9$: interestingly, he discovered that the Polish algorithm usually converges very fast, except in a few isolated cases, where it still converges, but in an extremely long time. Let us mention that other algebraic laws are eligible for the Polish algorithm: as can be expected, convergence in the case of associativity is trivial, whereas selfdistributivity seems to be the most difficult one---but many questions remain open~\cite{Dei}.

%%%%
\section{Word problem, the case of racks, quandles, and spindles}\label{S:WPOther}

We conclude the survey with a similar investigation of the word problem for the three derived notions obtained by adding additional laws to selfdistributivity, namely racks, quandles, and spindles. We shall successively review the case of free racks and free quandles, which are very similar and easy (Subsection~\ref{SS:WPQuandle}), and finish with the case of free spindles, which seems very difficult (Subsection~\ref{SS:WPSpindle}).

%%%%
\subsection{The case of racks and quandles}\label{SS:WPQuandle}

The definition of racks and quandles comes in two versions, according to whether one or two binary operations are considered. If only one operation is concerned, the condition that the right translations are bijective does not correspond to obeying an algebraic law, and there is no natural word problem. By contrast, when two operations~$\opR, \opRb$ are considered, being a rack (\resp a quandle) corresponds to obeying~\eqref{RD} plus the two algebraic laws of~\eqref{E:InvOp} (\resp these, plus the idempotency law $\xx \opR \xx = \xx$), and the word problem is a well posed question. We denote by~$\eqRack$ and $\eqQuandle$ the congruences on $\TERM{\opR, \opRb}\XX$ generated by the instances of the laws~RD and~\eqref{E:InvOp} (\resp these plus idempotency). By construction, the quotient-structure~$\TERM{\opR,\opRb}\XX{/}{\eqRack}$ is a free rack based on~$\XX$, and, similarly, $\TERM{\opR,\opRb}\XX{/}{\eqQuandle}$ is a free quandle based on~$\XX$.

We shall see that the word problems are easy here, because there exists a simple family of distinguished (or ``normal'') terms. The general principle is as follows:

\begin{lemm}\label{L:Normal}
Let $\LLL$ be a family of algebraic laws involving a signature~$\Sigma$. Let $\NT$ be a subset of~$\TERM\Sigma\XX$. Let $\SSS$ be a $\Sigma$-structure generated by~$\XX$. Assume that

\ITEM1 every term in~$\TERM\Sigma\XX$ is $\eqL$-equivalent to at least one term in~$\NT$,

\ITEM2 distinct terms in~$\NT$ have distinct evaluations in~$\SSS$.

\noindent Then $\SSS$ is a free $\LLL$-algebra based on~$\XX$.
\end{lemm}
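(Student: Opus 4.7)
The plan is to exhibit an isomorphism between the canonical realization of the free $\LLL$-algebra on $\XX$, namely the quotient $\TERM\Sigma\XX{/}{\eqL}$, and the given structure $\SSS$. For the conclusion even to make sense, $\SSS$ must be an $\LLL$-algebra; this is implicit in the statement and, in the intended applications to free racks and free quandles, will be verified directly for the concrete candidate structure.

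The inclusion $\XX \hookrightarrow \SSS$ extends uniquely to a $\Sigma$-morphism $\eval_\SSS : \TERM\Sigma\XX \to \SSS$, since $\TERM\Sigma\XX$ is absolutely free on $\XX$; and this morphism is surjective because $\SSS$ is generated by $\XX$. Since $\SSS$ satisfies the laws of $\LLL$, the map $\eval_\SSS$ respects the congruence $\eqL$ and descends to a surjective $\LLL$-morphism $\pi : \TERM\Sigma\XX{/}{\eqL} \to \SSS$ that is the identity on $\XX$.

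The only real content is to prove $\pi$ injective, and this is precisely where (i) and (ii) combine. Assume $\pi([\TT_1]) = \pi([\TT_2])$; by (i), pick normal representatives $\TT'_1, \TT'_2 \in \NT$ with $\TT_\ii \eqL \TT'_\ii$ for $\ii = 1, 2$, whence $\eval_\SSS(\TT'_1) = \pi([\TT_1]) = \pi([\TT_2]) = \eval_\SSS(\TT'_2)$, and (ii) forces $\TT'_1 = \TT'_2$, hence $[\TT_1] = [\TT_2]$. So $\pi$ is an isomorphism of $\LLL$-algebras that fixes $\XX$ pointwise; since $\TERM\Sigma\XX{/}{\eqL}$ is by construction a free $\LLL$-algebra on $\XX$, the universal property transports along $\pi$ and $\SSS$ is itself free on $\XX$. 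The one delicate point is the hidden requirement that $\SSS$ obey the laws of $\LLL$; once that is granted, the argument reduces to a two-line combination of (i) and (ii).
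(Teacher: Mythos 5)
Your proof is correct and follows essentially the same route as the paper's: the paper shows directly that $\TT \eqL \TT'$ is equivalent to $\eval(\TT, \SSS) = \eval(\TT', \SSS)$ by passing through normal representatives from~$\NT$, which is exactly the injectivity of your induced map~$\pi$ on the quotient $\TERM\Sigma\XX{/}{\eqL}$. Your remark that the statement implicitly requires $\SSS$ to be an $\LLL$-algebra is apt; the paper's proof uses this hypothesis as well.
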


\begin{proof}
Write $\eval(\TT, \SSS)$ for the evaluation of a term~$\TT$ in~$\SSS$. Let $\TT, \TT'$ be two terms in~$\TERM\XX\Sigma$. By~\ITEM1, there exist $\TT_1$ and~$\TT'_1$ in~$\NT$ satisfying $\TT \eqL\nobreak \TT_1$ and~$\TT' \eqL\nobreak \TT'_1$. As $\eqL$ is transitive, $\TT \eqL\nobreak \TT'$ is equivalent to $\TT_1 \eqL\nobreak \TT'_1$, hence, by~\ITEM2, to $\eval(\TT_1, \SSS) = \eval(\TT'_1, \SSS)$. As $\SSS$ is an $\LLL$-algebra, $\TT \eqL \TT_1$ implies $\eval(\TT, \SSS) =\nobreak \eval(\TT_1, \SSS)$ and, similarly, $\eval(\TT', \SSS) =\nobreak \eval(\TT'_1, \SSS)$. Finally, we deduce that $\TT \eqL \TT'$ is equivalent to~$\eval(\TT, \SSS) =\nobreak \eval(\TT', \SSS)$. Hence, $\SSS$ is free.
\end{proof}

Applying this to the cases of racks and quandles is easy.

\begin{prop}[\bf realization]\label{P:FreeConj}
Let~$\FG\XX$ be a free group based on a set~$\XX$. 

\ITEM1 The structure~$\HalfConj(\XX,\FG\XX)$ is a free rack based on~$\XX$.

\ITEM2 The structure~$\Conjjj(\FG\XX)$ is a free quandle based on~$\XX$.
\end{prop}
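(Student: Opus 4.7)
The plan is to apply Lemma~\ref{L:Normal} to both statements, taking $\SSS = \HalfConj(\XX, \FG\XX)$ in case~\ITEM1 and $\SSS = \Conjjj(\FG\XX)$ in case~\ITEM2. For both parts, I introduce the same family of \emph{left-flat} terms
\begin{equation*}
\TT \;=\; \xx_0 \, \text{op}_1 \, \xx_1 \, \text{op}_2 \, \xx_2 \, \cdots \, \text{op}_n \, \xx_n \quad (n \ge 0),
\end{equation*}
with $\xx_i \in \XX$ and $\text{op}_i \in \{\opR, \opRb\}$ (left-associated), and attach to each such $\TT$ the word $\ww(\TT) := \xx_1^{\epsilon_1} \cdots \xx_n^{\epsilon_n} \in \FG\XX$, where $\epsilon_i = +1$ (resp.~$-1$) when $\text{op}_i = \opR$ (resp.~$\opRb$). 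Call $\TT$ a \emph{rack normal form} if $\ww(\TT)$ is freely reduced, and a \emph{quandle normal form} if, in addition, $\ww(\TT)$ does not begin with $\xx_0^{\pm 1}$.

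For Condition~\ITEM1 of Lemma~\ref{L:Normal} (existence), I would invoke the four mixed-distributivity identities derivable from RD and~\eqref{E:InvOp}, uniformly of the shape
\begin{equation*}
(\xx \, \text{op} \, \yy) \, \text{op}' \, \zz \;\eqRack\; (\xx \, \text{op}' \, \zz) \, \text{op} \, (\yy \, \text{op}' \, \zz) \qquad (\text{op}, \text{op}' \in \{\opR, \opRb\}),
\end{equation*}
which, read in the inverse direction, let one ``peel'' the right subtree of any term until it becomes a variable, yielding a left-flat expression in the same class. In the rack case, \eqref{E:InvOp} further cancels every adjacent pair $\text{op}_i \xx_i \, \text{op}_{i+1} \xx_{i+1}$ with $\xx_i = \xx_{i+1}$ and $\text{op}_i \ne \text{op}_{i+1}$---this is precisely free reduction of $\ww(\TT)$---producing a rack normal form. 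In the quandle case, idempotency combined with Prop.~\ref{P:InvOp} yields $\xx \opR \xx = \xx \opRb \xx = \xx$, so whenever $\ww(\TT)$ starts with $\xx_0^{\pm 1}$ the leading pair $\xx_0 \, \text{op}_1 \, \xx_0$ can be deleted; iterating produces a quandle normal form.

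For Condition~\ITEM2 (injectivity), a direct induction on~$n$ using \eqref{E:Half} and \eqref{E:HalfBis} shows that a left-flat $\TT$ evaluates in $\HalfConj(\XX, \FG\XX)$ to the pair $(\xx_0, \ww(\TT)) \in \XX \times \FG\XX$, so distinct rack normal forms (with $\ww(\TT)$ already reduced) yield distinct elements, settling~\ITEM1. Similarly, from \eqref{E:Conj} and \eqref{E:Conjj}, a left-flat $\TT$ evaluates in $\Conjjj(\FG\XX)$ to $\ww(\TT)^{-1} \xx_0 \, \ww(\TT)$; the quandle normal form condition guarantees this expression is a reduced word of length $2n+1$ in $\FG\XX$, so distinct normal forms map to distinct elements, settling~\ITEM2. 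The main technical obstacle lies in the existence step: the peel-right rewrites duplicate subterms and thus enlarge the expression, so termination must be argued via a suitable complexity measure (for instance, the maximum depth of a non-leaf right-child, which strictly decreases at each peeling).
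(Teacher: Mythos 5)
Your proposal is correct and follows essentially the same route as the paper: the same left-associated normal forms (exponent word $\ww(\TT)$ freely reduced, plus $\xx_1 \not= \xx_0$ in the quandle case), the same appeal to Lemma~\ref{L:Normal}, and the same evaluations $(\xx_0, \ww(\TT))$ in~$\HalfConj(\XX,\FG\XX)$ and $\ww(\TT)\inv \xx_0 \ww(\TT)$ in~$\Conjjj(\FG\XX)$. The paper dismisses the existence step as an ``easy induction''; your peeling argument (which, to be precise, first rewrites the left factor as $(\uu \opRb \zz) \opR \zz$ via~\eqref{E:InvOp} before contracting with the mixed distributivity identities) is a reasonable way to carry it out.
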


\begin{proof}[Proof (principle)]
Write~$\opR^{+1}$ for~$\opR$ and $\opR^{-1}$ for~$\opRb$, and define~$\NT$ to be the family of all terms of the particular form
\begin{equation}\label{E:Normal1}
( \pdots ((\xx \opR^{\ee_1} \xx_1) \opR^{\ee_2} \xx_2) \pdots ) \opR^{\ee_\nn} \xx_\nn,
\end{equation}
with $\nn \ge 0$, $\xx, \xx_1 \wdots \xx_\nn \in \XX$, $\ee_1 \wdots \ee_\nn = \pm1$, and $\ee_\ii \not= \ee_{\ii + 1}$ implying $\xx_\ii \not= \xx_{\ii + 1}$. An easy induction shows that every term in~$\TERM{\opR,\opRb}\XX$ is $\eqRack$-equivalent to a term as in~\eqref{E:Normal1}. Next, the evaluation of such a term in~$\HalfConj(\XX, \FG\XX)$ is the pair 
\begin{equation*}\label{E:Normal2}
(\xx,  \xx_1^{\ee_1} \xx_2^{\ee_2} \pdots \xx_\nn^{\ee_\nn}),
\end{equation*}
where the second entry is a freely reduced signed $\XX$-word. Hence distinct terms of the form~\eqref{E:Normal1} have distinct $\HalfConj(\XX, \FG\XX)$-evaluations. By Lemma~\ref{L:Normal}, the structure $\HalfConj(\XX, \FG\XX)$ is a free rack based on~$\XX$.

The argument is similar for quandles, demanding in addition $\xx_1 \not= \xx$ in~\eqref{E:Normal1}. The evaluation in~$\Conjjj(\FG\XX)$ is then the freely reduced signed $\XX$-word
\begin{equation*}\label{E:Normal2}
\xx_\nn^{-\ee_\nn} \pdots \xx_2^{-\ee_2} \xx_1^{-\ee_1} \xx \xx_1^{\ee_1} \xx_2^{\ee_2} \pdots \xx_\nn^{\ee_\nn},
\end{equation*}
which again determines the term~\eqref{E:Normal1} it comes from.
\end{proof}

We derive a semantic algorithm for the word problems of racks and quandles. Below we use $\Red$ for free group reduction, that is, iteratively deleting length~$2$ factors of the form~$\xx\xx\inv$ or~$\xx\inv\xx$. We use $\ew$ for the empty word.

\begin{algo}[\bf semantic algorithm for~$\eqRack$]\label{A:Rack}

\rm\hfill\par

\noindent {\bf Input}: Two $\XX$-terms $\TT$, $\TT'$

\noindent {\bf Output}: $\True$ if $\TT$ and $\TT'$ are $\eqRack$-equivalent, $\False$ otherwise

\lab{1} $(\xx, \ww):= \EVAL(\TT)$

\lab{2} $(\xx', \ww'):= \EVAL(\TT')$

\lab{3} {\bf if} $\xx \not= \xx'$ {\bf or} $\Red(\ww) \not= \Red(\ww') $ {\bf then}

\lab{4} \quad {\bf return} $\False$

\lab{5} {\bf else}

\lab{6} \quad {\bf return} $\True$

\medskip

\lab{7} {\bf function} $\EVAL(\TT:$ term): element of 
$\XX \times (\XX \cup \XX\inv)^*$

\lab{8} {\bf if} $\TT = \xx \in \XX$ {\bf then}

\lab{9} \quad {\bf return} $(\xx, \ew)$

\lab{10} {\bf else if} $\TT = \TT_1 \opR \TT_2$ {\bf then}

\lab{11} \quad {\bf return} $\EVAL(\TT_1) \opR \EVAL(\TT_2)$ with $\opR$ as in~\eqref{E:Half}

\lab{12} {\bf else if} $\TT = \TT_1 \opRb \TT_2$ {\bf then}

\lab{13} \quad {\bf return} $\EVAL(\TT_1) \opRb \EVAL(\TT_2)$ with $\opRb$ as in~\eqref{E:HalfBis}

\end{algo}

Composing free reduction with the function~$\EVAL$ provides the evaluation in the structure~$\HalfConj(\XX,\FG\XX)$. So, Algorithm~\ref{A:Rack} is a direct translation of Prop.~\ref{P:FreeConj}\ITEM1.

An entirely similar algorithm solves the word problem for the quandle laws, at the expense of replacing the evaluation in~$\HalfConj(\XX, \FG\XX)$ with one in~$\Conjjj(\FG\XX)$ using the conjugacy operations of~\eqref{E:Conj}, and appealing to Prop.~\ref{P:FreeConj}\ITEM2.

%%%%
\subsection{The case of spindles}\label{SS:WPSpindle}

Let us finally address the word problem for free spindles, that is, the question of deciding whether two terms~$\TT, \TT'$ are LDI-equivalent, where LDI refers to the conjunction of (left) selfdistributivity~LD and idempotency~I. We use~$\eqLDI$ for the associated congruence on terms.

The word problem of~$\LDI$ is trivial for terms involving only one variable, as an easy induction gives $\TT \eqLDI \xx$ for every~$\TT$ in~$\TERM\op\xx$. As a consequence, the free (left) spindle on one generator has only one element.

Frustratingly, this case is the only one, for which a solution is known. No semantic solution is known, because no realization of free (left) spindles is known. As observed in Example~\ref{X:Conj}, the conjugacy structure~$\Conjj\GG$ of any group~$\GG$ is a quandle, hence a spindle. One might think that, starting with a free group, the associated conjugacy spindle might be free. This is not true:

\begin{prop}[\bf not free]
If $\GG$ is a group distinct of~$\{1\}$, the conjugacy spindle~$\Conjj\GG$ is not free.
\end{prop}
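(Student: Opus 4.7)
The plan is to argue by contradiction, exploiting the fact that $\Conjj\GG$ is not merely a spindle but a \emph{quandle}: for each $\aa \in \GG$, the left translation $\bb \mapsto \aa \op \bb = \aa\bb\aa\inv$ is a bijection of $\GG$, with inverse $\bb \mapsto \aa\inv\bb\aa$. So if $\Conjj\GG$ were isomorphic to a free spindle $\TERM\op{\XX_0}/{\eqLDI}$ for some set $\XX_0$, that free spindle would also be forced to be a quandle, and the plan is to rule this out in every case.

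First I would dispose of the small cases. As recalled at the beginning of Section~\ref{SS:WPSpindle}, every term in one variable collapses to that variable modulo $\eqLDI$, so a free spindle on $0$ or $1$ generators has at most one element; since $|\Conjj\GG| = |\GG| \geqslant 2$, we must have $|\XX_0| \geqslant 2$.

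The core step is then to show that the free spindle on two or more generators is not a quandle. I would pick distinct $\xx_1, \xx_2 \in \XX_0$ and prove that the equation $\xx_1 \op \bb = \xx_2$ has no solution in $\TERM\op{\XX_0}/{\eqLDI}$. The key idea is to project onto the commutative idempotent spindle $\LL = (\{0,1\}, \wedge)$; this is a left-spindle because commutativity and idempotency of $\wedge$ force both sides of $\LD$ to collapse to the same triple meet. By the universal property of the free spindle, there is a homomorphism $\phi : \TERM\op{\XX_0}/{\eqLDI} \to \LL$ sending $\xx_1 \mapsto 0$, $\xx_2 \mapsto 1$, and every other generator to $0$. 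For every $\bb$ in the free spindle, $\phi(\xx_1 \op \bb) = 0 \wedge \phi(\bb) = 0 \ne 1 = \phi(\xx_2)$, so no such $\bb$ exists; hence the left translation by $\xx_1$ is not surjective and the free spindle is not a quandle, contradicting the assumed isomorphism. The only mildly subtle point I expect is the choice of a convenient \emph{test spindle} making some equation $\xx_1 \op \bb = \xx_2$ unsolvable, and the semilattice $(\{0,1\}, \wedge)$ does that cleanly.
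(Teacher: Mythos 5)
Your argument is correct, and it takes a genuinely different route from the paper's. You exploit a non-equational property of $\Conjj\GG$ --- it is a quandle, so all left translations $\bb \mapsto \aa \op \bb = \aa\bb\aa\inv$ are bijective --- and show that no free spindle shares it: on at most one generator the free spindle has at most one element, while on two or more generators the homomorphism onto the two-element meet-semilattice $(\{0,1\},\wedge)$ sending $\xx_1 \mapsto 0$ and $\xx_2 \mapsto 1$ shows that $\xx_1 \op \bb = \xx_2$ has no solution, so the translation by $\xx_1$ is not surjective; since being a quandle is preserved by isomorphism of $\op$-algebras, no isomorphism with $\Conjj\GG$ can exist. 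The paper instead exhibits an extra \emph{algebraic law} satisfied by every conjugacy spindle, namely $((\xx \op \yy) \op \yy) \op (\yy \op \zz) = (\xx \op \yy) \op ((\yy \op \xx) \op \zz)$, and checks on an explicit four-element spindle that this law is not a consequence of $\LDI$, so the two sides are distinct in a free spindle on at least two generators but always coincide in $\Conjj\GG$. Your route is more elementary (the only auxiliary structure is $(\{0,1\},\wedge)$, which already appears among the lattice spindles of Section~\ref{SS:Classical}) and in fact proves something more general: no quandle with at least two elements is a free spindle. What the paper's route buys is different and relevant to the surrounding discussion: it shows that the \emph{equational} theory of conjugation strictly contains $\LDI$, which is the phenomenon behind the cited results that the laws of group conjugation form an infinite, non-finitely-based extension of $\LDI$, and which bears directly on the word problem questions of that subsection; your argument, resting on a property that is not expressible by identities, gives no information of that kind.
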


\begin{proof}[Principle of proof]
There exist algebraic laws satisfied by conjugation and not consequences of~$\LDI$~\cite{DKM}, a typical example being
\begin{equation}\label{E:ConjL}
((\xx \op \yy) \op \yy) \op (\yy \op \zz) = (\xx \op \yy) \op ((\yy \op \xx) \op \zz).
\end{equation}

\noindent\begin{minipage}{\textwidth}\rightskip28mm
Then \eqref{E:ConjL} holds in every conjugacy left-spindle since, when $\xx \op \yy$ is evaluated to~$\xx \yy \xx\inv$, the two terms of~\eqref{E:ConjL} evaluate to~$\xx \yy \xx\inv \yy \xx \yy\inv \zz \yy \xx\inv \yy\inv \xx \yy\inv \xx\inv$, whereas \eqref{E:ConjL} fails for $\xx = \zz = 2$ and $\yy = 3$ in the four-element left-spindle whose table is shown on the right.\VR(0,2)
\hfill\begin{picture}(0,0)(-2.5,-9.5)
\put(0,0){\footnotesize\begin{tabular}{c|cccc}
1&1&2&3&4\\ \hline 1&1&2&3&4\\ 2&1&2&1&2\\ 3&3&4&3&4\\ 4&1&2&3&4
\end{tabular}}
\end{picture}
\end{minipage} 

An infinite family of similar laws is constructed in~\cite{Lrc}, and it is shown that no finite subfamily generates it. Also see~\cite{Sta} and~\cite{Dfy}.
\end{proof}

In the direction of syntactic solutions, a counterpart of the approach of Section~\ref{SS:Thompson} was successfully developed by P.\,Jedli\v cka in~\cite{Jed1, Jed2, Jed3}: in the case of LDI, the confluence property still holds, and one can investigate a geometry monoid~$\Geom\LDI$ similar to~$\Geom\LD$ but, at least because no relevant blueprint was found so far, the approach falls short of solving the word problem. Thus, once again, we meet with a puzzling open question:

\begin{ques}
Is the word problem for~$\LDI$ solvable?
\end{ques}

A positive answer may seem likely, but it remains unknown so far.

%%%%

\end{document}